\newtheorem{theorem}{Theorem}[section]
\newtheorem{lemma}[theorem]{Lemma}
\newtheorem{proposition}[theorem]{Proposition}
\newtheorem{corollary}[theorem]{Corollary}
\newtheorem{remark}[theorem]{Remark}
\newtheorem{hyp}[theorem]{Hypothesis}
\newcommand{\R}{{\mathbb R}}
\newcommand{\Z}{{\mathbb Z}}
\newcommand{\N}{{\mathbb N}}
\newcommand{\U}{{\mathbb T}}
\newcommand{\hs}[1]{\hskip -#1pt}
\newcommand{\tr}{\mathrm{Tr}}
\newcommand{\one}{1\!\!\!\;\mathrm{l}}
 \newcommand{\supp}{{\rm supp\ }}
 \newcommand{\loc}{\rm loc}
\newcommand{\A}{\mathcal{A}}
\newcommand{\G}{\mathcal{G}}
\newcommand{\T}{\mathcal{T}}
\newcommand{\eps}{\varepsilon}
\numberwithin{equation}{section}
\begin{document}

\title[Asymptotic behavior in time periodic parabolic problems]{Asymptotic behavior
in time periodic parabolic problems with unbounded coefficients}
\author[L. Lorenzi, A. Lunardi, A. Zamboni]{Luca Lorenzi, Alessandra Lunardi, Alessandro Zamboni}

\address{Dipartimento di Matematica\\ Viale G.P. Usberti, 53/A \\
I-43100 Parma\\ Italia}
\email{luca.lorenzi@unipr.it}
\email{alessandra.lunardi@unipr.it}
\email{zambo1903@virgilio.it}

\subjclass[2000]{47D06, 47F05, 35B65}
\keywords{Nonautonomous PDEs, unbounded coefficients, asymptotic
behavior, invariant measures, spectral gap.}

\begin{abstract}
We study asymptotic behavior in a class of non-autonomous second order parabolic equations with time periodic unbounded coefficients in $\R\times \R^d$. Our results generalize and improve  asymptotic behavior results for Markov semigroups having an invariant measure.
We also study spectral properties of the realization of the parabolic operator $u\mapsto \A(t) u - u_t$ in suitable $L^p$ spaces.
\end{abstract}

\maketitle

\section{Introduction}

We consider linear second-order differential operators,
\begin{align}
\label{A(t)}
(\A(t)\varphi) (x) &=\sum_{i,j=1}^d
q_{ij}(t,x)D_{ij}\varphi (x) + \sum_{i=1}^d b_i(t,x) D_i\varphi
(x)\notag\\
&= \tr \left( Q(t,x) D^2 \varphi (x)\right) + \langle b(t,x),
\nabla \varphi (x) \rangle,
\end{align}
with smooth enough coefficients defined in $ \R^{1+d}$,   satisfying the uniform ellipticity assumption
\begin{equation}
\label{ellipticity}
\sum_{i,j=1}^d q_{ij}(t,x) \xi_i\xi_j \geq \eta_0|\xi|^2, \quad (t,x)\in  \R^{1+d}, \;\xi\in \R^d.
\end{equation}
Under general assumptions, a Markov evolution operator $P(t,s)$
associated to the family $\{\A (t)\}$ has been constructed and
studied in \cite{KLL}. For every continuous and bounded $\varphi$
and for any $s\in \R$, the function $(t,x)\mapsto P(t,s)\varphi(x)$
is the unique bounded classical solution $u$ to the Cauchy problem
\begin{equation}
\label{CP}
\left\{ \begin{array}{rcll}
\displaystyle  D_tu(t,x) &\hs{5} =\hs{5} & \A (t)u(t,x), & t>s,\; x \in   \R^{d},\\[2mm]
u(s,x) &\hs{5} =\hs{5} & \varphi(x), & x\in \R^d.
\end{array}\right.
\end{equation}
Since the coefficients are allowed to be unbounded, $L^p$ spaces
with respect to the Lebesgue measure are not a natural setting for
problem \eqref{CP}. This is well understood in the autonomous case
$\A(t) \equiv A$, where $P(t,s) = e^{(t-s)A}$ and the Lebesgue
measure is replaced by an invariant measure, i.e., a Borel
probability measure $\mu$ such that
\begin{eqnarray*}
\int_{\R^d} e^{tA}\varphi\, d\mu = \int_{\R^d} \varphi\, d\mu, \quad t>0, \; \varphi \in C_b(\R^d).
\end{eqnarray*}
Under suitable assumptions it is possible to show that there exists a
unique  invariant measure. In this case, $e^{tA}$ is extended to a
contraction semigroup in $L^p(\R^d, \mu)$ for every $p\in [1,
\infty)$, and $e^{tA}\varphi $ goes to the mean value $\int_{\R^d}
\varphi \, d \mu$ in $L^p(\R^d, \mu)$ for every $\varphi \in
L^p(\R^d, \mu)$ as $t\to \infty$, if $p>1$.

The natural generalization of invariant measures to the time
depending case are fa\-milies of Borel probability measures
$\{\mu_s:\, s\in \R \}$, called {\em evolution systems of measures}, such
that
\begin{eqnarray*}
\int_{\R^d} P(t,s)\varphi\, d\mu_t = \int_{\R^d} \varphi\, d\mu_s,
\quad t>s, \; \varphi \in C_b(\R^d).
\end{eqnarray*}
A sufficient condition for their existence, similar to a well known sufficient condition for the existence
of an invariant measure in the autonomous case, is the following: there exist a
$C^2$ function $V:\R^d\to \R$ such that $\lim_{|x|\to \infty}V(x) = +\infty$,
and positive numbers $a$, $c$ such that $\A(s)V(x) \leq a - cV(x)$ for each $s\in \R$ and $x\in \R^d$.

If an evolution system of measures exists, then,
as in the autonomous case, $P(t,s)$ may be extended to a contraction  (still called $P(t,s)$)
from $L^p(\R^d, \mu_s)$ to $L^p(\R^d, \mu_t)$, i.e.,
\begin{equation}
\label{contraction}
 \| P(t,s)\varphi  \|_{L^p(\R^d, \mu_t)} \leq  \|  \varphi  \|_{L^p(\R^d, \mu_s)}, \quad t>s,
\end{equation}
for every $\varphi\in L^p(\R^d, \mu_s)$.

In this paper we treat the case of time periodic coefficients, and
we study asymptotic behavior of $P(t,s)$ and spectral properties of
the parabolic operator
\begin{equation}
\label{G}
\G : = \A(t) - D_t
\end{equation}
in $L^p$ spaces associated to a distinguished  evolution system of
measures. In fact, the evolution systems of measures are infinitely
many, and we consider the unique $T$-periodic one, i.e. the only one
such that $\mu_s= \mu_{s+T}$ for every $s\in \R$,  where $T$ is the period of the
coefficients.
We extend to this setting  the convergence results of
the autonomous case, showing that for  $1<p<\infty$
\begin{equation}
\label{CompAs} \lim_{t\to \infty} \| P(t,s)\varphi -
m_s\varphi\|_{L^p(\R^d, \mu_t)} =0, \quad s\in \R, \; \varphi \in
L^p(\R^d, \mu_s),
\end{equation}
and
\begin{equation}
\label{CompAsInd} \lim_{s\to -\infty} \| P(t,s)\varphi -
m_s\varphi\|_{L^p(\R^d, \mu_t)} =0, \quad t\in \R, \; \varphi \in
C_b(\R^d),
\end{equation}
under suitable assumptions, that in the case of $C^1_b$ diffusion coefficients reduce to
\begin{equation}
\label{DissB}
\sup_{s\in \R, \;x, y\in \R^d, \,x\neq y} \frac{\langle b(s,x) - b(s,y), x-y\rangle }{|x-y|^2} < \infty ,
\end{equation}
or, equivalently, $\sup \{\sum_{i,j=1}^{d} D_ib_j(s,x)\xi_i\xi_j:$
$(s,x)\in \R^{1+d}$, $\xi\in \R^d,$ $|\xi| =1\} <\infty$. This can
be seen as a weak dissipativity condition on the vector fields
$b(s,\cdot)$. Under a stronger dissipativity condition, for bounded
diffusion coefficients  we prove exponential convergence, i.e., for
every $p\in (1, \infty)$ there exist $M >0$, $\omega <0$ such that
\begin{equation}
\label{CompAsExp} \| P(t,s)\varphi - m_s\varphi\|_{L^p(\R^d, \mu_t)}
\leq Me^{\omega (t-s)} \|\varphi\|_{L^p(\R^d, \mu_s)} , \quad t>s,
\; \varphi \in L^p(\R^d, \mu_s).
\end{equation}
The stronger dissipativity assumption was used in \cite{KLL} to
prove pointwise gradient estimates for $P(t,s)\varphi$. In fact, we
arrive at exponential convergence through gradient estimates. Then,
we discuss the rate of convergence;  in Theorem \ref{gamma=omega} we
show  that for $p\geq 2$ and $\omega \in \R$ the conditions
\begin{enumerate}[\rm (a)]
\item
$\exists M\hskip -1pt>\hskip -1pt0 :$ $\| P(t,s)\varphi -
m_s\varphi\|_{L^p(\R^d, \mu_t)}\hskip -1pt \leq\hskip -1pt M
e^{\omega (t-s)} \|\varphi\|_{L^p(\R^d, \mu_s)}$, $  t>s$,
$\varphi\in L^p(\R^d, \mu_s)$,\vspace{2mm}
\item
$\exists N\hskip -1pt>\hskip -1pt0 :$ $ \| \,|\hskip -.5pt\nabla_x
P(t,s)\varphi |\hskip -.5pt\, \|_{L^p(\R^d, \mu_t)}\hskip -1pt
\leq\hskip -1pt N e^{\omega (t-s)} \|\varphi\|_{L^p(\R^d, \mu_s)}$,
$  t>s+1$, $\varphi\in L^p(\R^d, \mu_s)$,
\end{enumerate}
are equivalent. Therefore, denoting by $\omega_p$ (resp. $\gamma_p$) the infimum of the $\omega \in \R$ such that (a) (resp. (b)) holds,
we have $\omega_p = \gamma_p$.

Such characterization of the convergence rate was proved for time
depending Ornstein-Uhlenbeck operators (i.e., when $Q$ is
independent of $x$ and $B$ is linear in $x$)   in \cite{GL2} for
$p=2$. Apart from Ornstein-Uhlenbeck operators, it seems to be new
even in the autonomous case.
For Ornstein-Uhlenbeck operators we have a precise expression of $\gamma_2$ in terms of the data, and our Theorem  \ref{Th:cons}  shows that  $\gamma_p =\gamma_2 <0$ for every $p\in (1, \infty)$. In general, $\gamma_p$
could depend explicitly on $p$  and we only give upper estimates for it.

In the autonomous case, exponential convergence to equilibrium in
$L^2(\R^d, \mu)$ is usually obtained through Poincar\'e inequalities
such as
\begin{equation}
\label{PoincAuton}
\int_{\R^d} \left |\varphi - \int_{\R^d} \varphi\,d\mu \right |^2 d\mu \leq C_0 \int_{\R^d} |Q^{1/2}\nabla \varphi|^2 d\mu, \quad \varphi \in D(A),
\end{equation}
where $D(A)$ is the domain of the generator of   $e^{tA}$ in
$L^2(\R^d,\mu)$. If  \eqref{PoincAuton} holds we get $\omega_2 \leq
-\eta_0/C_0$ and in the (symmetric) case $A\varphi = \Delta \varphi
+ \langle \nabla \Phi, \nabla \varphi\rangle$ we have $\omega_2 =
1/C_0 = \eta_0/C_0$, and $\omega_2$ is a minimum. Therefore, the
problem is reduced to find the best Poincar\'e constant $C_0$, which
is a hard task in general. The upper bounds on $C_0$ that come from
gradient estimates yield $\eta_0/C_0 \geq \gamma_2$, and the
equality holds only in very special cases. Therefore, Theorem
\ref{gamma=omega} gives a better rate of convergence  (see the
discussion after Corollary \ref{Cor:ca}).

We follow a purely deterministic approach, although the well known connections between linear second order parabolic equations and nonlinear ordinary stochastic differential equations might be used (such as e.g. in \cite{SV,EK,C}) to get some of our formulae and/or  estimates.
The key tool of our analysis is  the evolution semigroup,
\begin{eqnarray*}
\T(t) u (s,x) = P(s, s-t)u(s-t, \cdot)(x), \quad t\geq 0, \; s\in
\R, \;x\in \R^d,
\end{eqnarray*}
that is a Markov semigroup in the space $C_b(\U \times \R^d)$ of the
continuous and bounded functions $u$ such that $u(s,\cdot) = u(s+T,
\cdot)$ for all $s\in \R$. Its unique invariant measure is
\begin{eqnarray*}
\mu(ds, dx) = \frac{1}{T} \, \mu_s(dx)ds.
\end{eqnarray*}
All Markov semigroups having invariant measures have natural
extensions to contraction semigroups in $L^p$ spaces with respect to
such measures. Dealing with periodic functions, we consider the
space $L^p(\U \times \R^d, \mu)$ that consists of all
$\mu$-measurable functions $u$ such that $u(s, \cdot) = u(s+T,
\cdot)$ for a.e. $s\in \R$, and such that $\int_{0}^{T}\int_{\R^d}
|u(s,x)|^p \mu_s(dx)\,ds$ is finite. We denote by $G_p$ the
infinitesimal generator of $\T(t)$ in $L^p(\U \times \R^d, \mu)$.
$G_p$ is a realization of the parabolic operator $\G$, defined in
\eqref{G}, in the space $L^p(\U \times \R^d, \mu)$.

We introduce a projection  $\Pi$  on space independent functions,
\begin{eqnarray*}
\Pi u(s,x): = \int_{\R^d} u(s,y)\mu_s(dy), \quad s\in \R,\;\,x\in\R^d,
\end{eqnarray*}
and we prove that $\T(t)(I-\Pi)$ is strongly stable in all spaces $L^p(\U \times \R^d, \mu)$, $1<p<\infty$, that is
\begin{equation}
\label{CompAsintT(t)}
 \lim_{t\to \infty }\| \T(t) (u-\Pi u) \|_{L^p(\U \times \R^d, \mu)} =0, \quad u\in L^p(\U \times \R^d, \mu).
\end{equation}
From this fact we deduce \eqref{CompAs} and \eqref{CompAsInd}; if
$\T(t)(I-\Pi)$ is exponentially stable we deduce \eqref{CompAsExp}.
We arrive at \eqref{CompAsintT(t)} through a similar property of the
space gradient of $\T(t) u$, i.e.,
\begin{eqnarray*}
\lim_{t\to \infty }\| \,| \nabla_x\T(t) u|\, \|_{L^p(\U \times \R^d, \mu)} = 0, \quad u\in L^p(\U \times \R^d, \mu),
\end{eqnarray*}
which is proved using semigroups arguments that seem not  to have
counterparts for evolution operators. In particular, we use the
identity
\begin{eqnarray*}
\int_{0}^{T}\int_{\R^d } u\, G_2 u \, d\mu = - \int_{0}^{T}\int_{\R^d} \langle Q\nabla_x u, \nabla_xu \rangle d\mu ,\quad u\in D(G_2),
\end{eqnarray*}
which is a time dependent version of what is called  {\em identit\'e de carr\`e du champ} by the french mathematicians.

$\T(t)$ is a nice example of a Markov semigroup that is not strong Feller and not irreducible, and that  has a unique invariant measure $\mu$. On the other hand, \eqref{CompAsintT(t)} shows that in general $\T(t)u$ does not converge to the mean value of $u$ with respect to $\mu$ as $t\to \infty$.

Together with asymptotic behavior results, it is natural to get spectral properties of the operators $G_p$, $1<p<\infty$. When  \eqref{CompAsExp} holds, we prove that $G_p$ has a spectral gap, and precisely
\begin{eqnarray*}
\sup \,\{ \mbox{\rm Re}\; \lambda: \lambda \in \sigma (G_p)\setminus i\R \} = \omega_p <0.
\end{eqnarray*}

We remark that the equation  $\lambda u -\G u =f$, with $f\in L^p(\U \times \R^d, \mu)$,  cannot be seen as an evolution equation in a fixed $L^p$ space $X$,
\begin{eqnarray*}
u'(t) - \A(t) u (t) + \lambda u(t) = f(t, \cdot)
\end{eqnarray*}
because our spaces $X(t) = L^p(\R^d, \mu_t)$ vary with time. For the same reason, $\T(t)$ is not a usual evolution semigroup in a fixed Banach space $X$. However, it exhibits some of the typical features of evolution semigroups in fixed Banach spaces, in particular the Spectral Mapping Theorem holds.

If the diffusion coefficients do not depend on the space variables, and the supremum in \eqref{DissB} is equal to some negative number $r_0$,   we get a log-Sobolev type inequality,
\begin{equation}
\label{LogSob2} \int_{0}^{T}  \int_{   \R^d } |u|^2 \log (u^2) d \mu
\leq \frac{1}{T}\int_0^T \Pi u^2 \log (\Pi u^2)ds + \frac{2
\Lambda}{  |r_0|} \int_{0}^{T}  \int_{   \R^d }  |\nabla_x u|^2
d\mu,
\end{equation}
for every $u\in D(G_2)$. Here, $\Lambda$ is the supremum of the
maximum eigenvalues of the matrices $Q(s)$ when $s$ varies in
$[0,T]$. Also this inequality is proved using the evolution
semigroup $\T(t)$,  through semigroups arguments that have no
counterparts for evolution operators. Using \eqref{LogSob2} we show
that the domains $D(G_p)$ are compactly embedded in $L^p(\U \times
\R^d, \mu)$ for $1<p<\infty$, and from this fact a lot of nice
consequences follow. In particular, the spectrum of each operator
$G_p$ consists of eigenvalues and it is independent of $p$, and  the
exponential decay rates $\omega_p \leq r_0$ are independent of $p$.

The interest in log-Sobolev estimates goes beyond asymptotic
behavior, and much literature has been devoted to them in the
autonomous case. See e.g., the surveys \cite{many,Gross}.  Therefore,
it is worth to establish them in $L^p$ spaces with time-space
variables. In Proposition \ref{dsharp any} and in Theorem
\ref{LogSob_dominio} we prove $L^p$ versions of \eqref{LogSob2}.

The paper ends with illustrations of the asymptotic behavior and
spectral results for explicit examples of families of operators
$\A(t)$ that satisfy our assumptions.

Except for nonautonomous Ornstein-Uhlenbeck operators, this one
seems to be the first systematic study of asymptotic behavior in
linear nonautonomous parabolic problems with unbounded coefficients
in $\R^d$. A part of our results lends itself to generalizations  to
some infinite dimensional settings, where $\R^d$ is replaced by a
separable Hilbert space $H$, in the spirit of e.g.,
\cite{DPZ,DPZbrutto,DPR}.

\subsection*{Notations}
We denote by $B_b(\R^d)$ the Banach space of all bounded and
Borel  measurable functions
$f:\R^d\to\R$, and by $C_b(\R^d)$ its subspace of all  continuous
functions.   $B_b(\R^d)$ and $C_b(\R^d)$ are endowed with the
sup norm $\|\cdot\|_{\infty}$. For $k\in \N$, $C^k_b(\R^d)$ is the
set of all functions $f\in C_b(\R^d)$ whose derivatives up to the
$k$th-order are bounded and continuous in $\R^d$. We use the
subscript ``$c$'' instead of ``$b$''  for spaces of functions  with compact support.


Throughout the paper we  consider real valued functions
$(s,x)\mapsto f(s,x)$ defined in $\R^{1+d}$, that are $T$-periodic
with respect to time. It is useful to identify such functions with
functions defined in $\U \times \R^d$, where $\U = [0,T]$    {\em
mod} $T$. So, we denote by $C_b(\U \times \R^d)$ the space of the
continuous, bounded, and $T$-time periodic functions $f:\R^{1+d}\to
\R$, endowed with the sup norm.
Similarly, for any $\alpha\in (0,1)$, we denote by
$C^{\alpha/2,\alpha}_{\rm loc}(\U\times\R^d)$ the set of all
functions $f\in C_b(\U\times\R^d)$ which belong to
$C^{\alpha/2,\alpha}([0,T]\times B(0,R))$ for any
$R>0$, and by $W^{1,2}_{p,\rm loc}(\U \times \R^d,ds\times dx)$ the set of all time periodic functions $f$ such that
$f$, $D_sf$, and the first and second order space derivatives of $f$ belong to $L^{p}((0,T)\times B(0,R), ds\times dx)$ for any
$R>0$.


 \section{Preliminaries}
 \label{preliminaries}
\setcounter{equation}{0}

\subsection{General properties of $P(t,s)$ and of evolution systems of measures}
\label{sub1}

\begin{hyp}\label{hyp1}
\begin{enumerate}[\rm (i)]
\item
The coefficients $q_{ij}$ and $b_i$ $(i,j=1,\ldots,d)$ are $T$-time periodic and belong to
$C^{\alpha/2,\alpha}_{\loc}(\U \times
 \R^{d})$ for any $i,j=1,\ldots,d$
and some $\alpha\in (0,1)$.
\item
For every $(s,x)\in  \R^{1+d}$,
the matrix $Q(s,x)$ is symmetric and there exists a function
$\eta: \U \times\R^d\to \R$ such that
$0<\eta_0:=\inf_{\U \times\R^d}\eta$ and
\begin{eqnarray*}
\langle Q(s,x)\xi , \xi \rangle \geq \eta(s,x) |\xi |^2, \qquad\;\,
\xi \in \R^d,\;\,(s,x) \in \U \times \R^{d}.
\end{eqnarray*}
\item
There exist a positive function $V  \in C^2(\R^d)$ and numbers
$a$,  $c>0$    such that
\begin{eqnarray*}
\qquad\;\, \lim_{|x|\to \infty} V (x) = \infty \quad \mbox{
and } \quad (\A(s) V)( x) \leq a - c V(x), \quad (s,x)\in  \U \times \R^{d}.
\end{eqnarray*}
\end{enumerate}
\end{hyp}

Here we recall some results from \cite{KLL} and \cite{LZ}. The first
one is that, under Hypothesis \ref{hyp1}(i)(ii), for every $f\in
C_b(\R^d)$ problem \eqref{CP} has a unique bounded classical
solution $u$. The evolution operator $P(t,s)$ is defined by
\begin{eqnarray*}
P(t,s)f  = u(t, \cdot), \quad t\geq s \in \R.
\end{eqnarray*}
Some properties of $P(t,s)$, taken from \cite{KLL}, are summarized
in the next theorem  and in its corollaries.

\begin{theorem}
\label{Th:P(t,s)} Let Hypothesis  \ref{hyp1} hold. Define $ \Lambda
:= \{(t,s,x)\in\R^{2+d}: t > s, \;x\in \R^{d}\} $. Then:
\begin{enumerate}[{\rm (i)}]
\item
for every $\varphi \in C_b(\R^d)$, the function $(t,s,x)\mapsto
P(t,s)\varphi(x)$ is continuous in $\overline{\Lambda }$. For every
$s\in \R$, the function $(t,x)\mapsto P(t,s)\varphi(x)$ belongs to
$C^{1+\alpha/2, 2+\alpha}_{\loc}((s, \infty)\times \R^d )$;
\item
for every $\varphi\in C^{\infty}_{c}( \R^d )$, the function
$(t,s,x)\mapsto P(t,s)\varphi(x)$ is continuously differentiable
with respect to $s$ in $\overline{\Lambda }$ and
$D_sP(t,s)\varphi(x) =   -P(t,s)\A(s)\varphi(x)$ for any
$(t,s,x)\in\overline\Lambda$;
\item
for each $(t,s,x)\in \Lambda$ there exists a Borel probability measure $p_{t,s,x}$ in $\R^d$ such that
\begin{equation}
\label{representation}
P(t,s)\varphi(x) = \int_{\R^d}\varphi (y)
p_{t,s,x}(dy), \quad f\in C_b(\R^d).
\end{equation}
Moreover, $ p_{t,s,x}(dy) = g(t,s,x,y)dy$ for a positive function
$g$. In particular, $P(t,s)$ is irreducible;
\item
$P(t,s)$ is strong Feller;  extending it to  $L^{\infty}(\R^d, dx)$ through  formula \eqref{representation}, it maps
  $L^{\infty}(\R^d, dx)$ (and, in particular, $B_b(\R^d)$) into $C_b(\R^d)$ for $t>s$, and
\begin{eqnarray*}
\|P(t,s)\varphi \|_{\infty} \leq \|\varphi \|_{\infty}, \quad
\varphi \in L^{\infty}(\R^d, dx), \;\,t>s;
\end{eqnarray*}
\item
there exists a tight$^($\footnote{i.e.,   $\forall \eps
>0$   $\exists R=R(\eps)>0$ such that
$
\mu_s(B(0,R)) \geq 1-\eps$, for all $s\in\R$.}$^)$
 evolution system of measures $\{ \mu_s: s\in
\R\}$ for $P(t,s)$.  Moreover,
\begin{equation}
\label{P(t,s)V}
P(t,s)V(x) := \int_{\R^d}V(y) p_{t,s,x}(dy) \leq
V(x) + \frac{a}{c}, \quad t>s, \;x\in \R^d,
\end{equation}
and
\begin{equation}
\label{m_tV}
  \int_{\R^d}V(y)\mu_t(dy) \leq \min V +
\frac{a}{c}, \quad t\in \R ,
\end{equation}
where the constants $a$ and $c$ are given by Hypothesis
\ref{hyp1}(iii). \end{enumerate}
\end{theorem}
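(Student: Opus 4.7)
The plan is to construct $P(t,s)$ by approximation on bounded balls, to read off (i)--(iv) from classical parabolic regularity theory, and to use the Lyapunov function of Hypothesis \ref{hyp1}(iii) to obtain the tight evolution system of measures in (v). For $n\in\N$ and $\varphi\in C_b(\R^d)$, let $u_n$ be the classical solution of the Cauchy--Dirichlet problem for $D_t u_n=\A(t)u_n$ on $(s,\infty)\times B(0,n)$ with datum $\varphi$ and zero lateral boundary values; existence and $C^{1+\alpha/2,2+\alpha}$-regularity are standard since the coefficients are locally H\"older continuous and $Q$ is uniformly elliptic on $B(0,n)$. The maximum principle gives $|u_n|\le\|\varphi\|_\infty$ and monotonicity in $n$ when $\varphi\ge 0$, while interior Schauder estimates give uniform $C^{1+\alpha/2,2+\alpha}$-bounds on compact subsets of $(s,\infty)\times\R^d$. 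A diagonal argument yields a bounded classical solution $u=P(\cdot,s)\varphi$ of \eqref{CP}; uniqueness among bounded solutions follows from a Khasminskii-type argument built on $V$. Continuity in all variables, i.e.\ (i), falls out of the construction. For (ii) I would differentiate the identity $P(t,s+h)P(s+h,s)\varphi=P(t,s)\varphi$ at $h=0$, using that $h^{-1}(P(s+h,s)\varphi-\varphi)\to\A(s)\varphi$ uniformly on $\R^d$ when $\varphi\in C^\infty_c(\R^d)$.

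For (iii) and (iv), Friedman--Aronson theory for parabolic operators with H\"older coefficients provides positive fundamental solutions $g_n(t,s,x,y)$ for the approximating problems, with uniform interior Gaussian bounds. A diagonal extraction produces a positive density $g$ satisfying \eqref{representation} first for $\varphi\in C_c(\R^d)$ and then, by dominated convergence, for all $\varphi\in C_b(\R^d)$; positivity of $g$ is the strong maximum principle. The strong Feller property is inherited from continuity of $g$ in $x$, which is a further consequence of the Schauder estimates, and the $L^\infty$-contractivity from $\int g(t,s,x,y)\,dy\le 1$.

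The heart of the proof is (v). Inequality \eqref{P(t,s)V} is obtained by truncating $V$ through functions $V_k\in C^2_c(\R^d)$ with $V_k\uparrow V$ and $\A(s)V_k\le a-cV_k$ up to an error supported outside $\{V_k=V\}$, applying $P(t,s)$, integrating the resulting differential inequality $\frac{d}{dt}P(t,s)V_k\le a-cP(t,s)V_k$ to get $P(t,s)V_k\le V_k e^{-c(t-s)}+(a/c)(1-e^{-c(t-s)})$, and letting $k\to\infty$ via monotone convergence. For the existence of $\{\mu_s\}$ I would exploit the $T$-periodicity of $P(t,s)$: fix $x_0\in\R^d$ and observe that $\{p_{t,t-nT,x_0}\}_n$ is tight, since Markov's inequality combined with \eqref{P(t,s)V} yields $p_{t,t-nT,x_0}(\{V>R\})\le(V(x_0)+a/c)/R$ uniformly in $n$. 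Extract a weak subsequential limit $\mu_t$; the evolution system identity $\int P(t,s)\varphi\,d\mu_t=\int\varphi\,d\mu_s$ is inherited by taking $n\to\infty$ in $\int P(t,s)\varphi\,dp_{t,t-nT,x_0}=\int\varphi\,dp_{s,t-nT,x_0}$ (semigroup property of $P$ together with periodicity), and \eqref{m_tV} follows from Fatou's lemma applied to $V_k\uparrow V$. The main obstacle is coordinating the various limits --- truncation of $V$, approximation on balls, and weak convergence of measures --- so that the Lyapunov bound survives each step; this forces the Feller continuity of $P(t,s)$ to be combined with the uniform tightness produced by $V$ in a careful order, and is the point where Hypothesis \ref{hyp1}(iii) does most of the work.
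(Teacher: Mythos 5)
First, a point of comparison: the paper does not prove Theorem \ref{Th:P(t,s)} at all. The sentence preceding it states that these properties are ``taken from \cite{KLL}'', so the theorem is imported from that reference without proof. Your reconstruction follows the strategy actually used there --- Cauchy--Dirichlet approximations on balls, interior Schauder estimates plus a diagonal argument, a Khasminskii-type uniqueness argument built on $V$, Green's functions of the approximating problems for \eqref{representation}, and the Lyapunov bound for \eqref{P(t,s)V} --- so in outline parts (i)--(iv) and the inequality \eqref{P(t,s)V} are sound (modulo the usual care in (ii): the uniform convergence of $h^{-1}(P(s+h,s)\varphi-\varphi)$ to $\A(s)\varphi$ is itself a nontrivial lemma of \cite{KLL}, and joint continuity of $D_sP(t,s)\varphi$ needs the two-sided derivative, not just the right one).

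The genuine gap is in the construction of the evolution system of measures in (v). The identity you invoke, $\int P(t,s)\varphi\,dp_{t,t-nT,x_0}=\int\varphi\,dp_{s,t-nT,x_0}$, is false: the left-hand side equals $P(t,t-nT)\bigl[P(t,s)\varphi\bigr](x_0)$, and $P(t,t-nT)$ applied to the function $P(t,s)\varphi$ does not simplify, because the evolution law reads $P(\rho,t)P(t,s)=P(\rho,s)$, i.e.\ the intermediate time must be the \emph{second} index of the outer operator. The correct relation is
\[
\int_{\R^d} P(t,s)\varphi\,dp_{\rho,t,x_0} \;=\; P(\rho,t)\bigl[P(t,s)\varphi\bigr](x_0)\;=\;P(\rho,s)\varphi(x_0)\;=\;\int_{\R^d}\varphi\,dp_{\rho,s,x_0},\qquad s<t<\rho,
\]
so the approximating measures must all be anchored at one common terminal point $(\rho,x_0)$ with $\rho\to+\infty$. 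After unwinding the periodicity, your two families are $p_{t+nT,t,x_0}$ and $p_{s+nT,s,x_0}$, whose anchors $t+nT$ and $s+nT$ differ by $t-s\notin T\Z$; they are therefore not linked by any finite-$n$ identity, and independent subsequential limits taken separately at each time need not form an evolution system. The standard repair: fix $t_0$ and $x_0$, use \eqref{P(t,s)V} to extract a weakly convergent subsequence $p_{t_0+n_jT,\,t_0,\,x_0}\rightharpoonup\mu_{t_0}$, and then \emph{define} $\mu_s$ for $s<t_0$ as the limit of $p_{t_0+n_jT,\,s,\,x_0}$; this limit exists automatically, since $\int\varphi\,dp_{t_0+n_jT,s,x_0}=\int P(t_0,s)\varphi\,dp_{t_0+n_jT,t_0,x_0}$ and $P(t_0,s)\varphi\in C_b(\R^d)$, and it is a probability measure by the tightness coming from the uniform bound $\int V\,dp_{\rho,s,x_0}\le V(x_0)+a/c$. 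The evolution identity then holds by construction, and choosing $x_0$ at a minimum point of $V$ gives \eqref{m_tV}. With that modification your argument for (v) goes through.
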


Note that estimate \eqref{P(t,s)V} implies that the family $\{
p_{t,s,x}: t>s, \;x\in B(0,r)\}$ is tight for every $r>0$.

Since the coefficients $q_{ij}$ and $b_i$ are $T$-time periodic,
uniqueness of the bounded solution to  \eqref{CP} implies that
$P(t+T,s+T) = P(t,s)$ for $t\geq s$. Moreover, looking at the
construction of  the measures $\mu_t$ of \cite{KLL} one can see that
$\mu_s = \mu_{s+T}$ for each $s\in \R$ (\cite[Rem. 6.8(i)]{LZ}).

The evolution systems of invariant measures are infinitely many, in
general. In the case of nonautonomous Ornstein-Uhlenbeck equations,
they  have been explicitly characterized in \cite[Prop. 2.2]{GL1}.
In the next section we shall prove that all the $T$-periodic
families $\{\mu_s: s\in \R\}$ constructed in \cite{KLL} actually
coincide, since $P(t,s)$ has a unique $T$-periodic evolution system
of measures.

In the next corollary we prove some consequences of   Theorem
\ref{Th:P(t,s)}. For this purpose, for every $\varphi \in
L^1(\R^d,\mu_s)$ we define the mean value
\begin{eqnarray*}
m_s\varphi:= \int_{\R^d}\varphi (y)\mu_s(dy).
\end{eqnarray*}

\begin{corollary}
\label{cor:continuity} Let  Hypothesis  \ref{hyp1} hold. Then:
\begin{itemize}
\item[(a)]
for every
$\varphi \in C_b(\R^d)$ the function $s\mapsto m_s\varphi  $ is
continuous in $\R$. More generally, for every $u\in C_b(\R^{1+d})$
the function $s\mapsto m_su(s, \cdot)$ is continuous in
$\R$;
\item[(b)]
for every $\varphi \in C_b(\R^d)$ and for every bounded sequence $(\varphi_n)\subset C_b(\R^d)$ that converges locally uniformly to
$\varphi$ we have
\begin{equation}
\label{lim_unif}
\lim_{n\to \infty} \sup_{s\in \R}  \|\varphi-\varphi_n\|_{L^p(\R^d,\mu_s)}  =0, \quad 1\leq p<\infty ,
\end{equation}
 and, for every $r>0$,
\begin{equation}
\label{lim_unif_P(t,s)} \lim_{n\to \infty} \sup_{s\leq t \in \R}
\|P(t,s) (\varphi-\varphi_n)\|_{L^{\infty}(B(0,r))}  =0.
\end{equation}
\item[(c)] For $t>s$, $P(t,s)$ may be extended to a bounded   operator from $L^p (\R^d, \mu_s)$ to $L^p (\R^d, \mu_t)$ for all  $p\in [1, \infty)$, and \eqref{contraction} holds.
\end{itemize}
\end{corollary}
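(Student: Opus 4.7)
The plan is to prove the three items in order, relying in each case on the invariance identity of the evolution system of measures together with the regularity and tightness properties collected in Theorem \ref{Th:P(t,s)}.

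For (a), fix $\varphi\in C_b(\R^d)$ and let $s_n\to s$. I would pick any $t>\sup_n s_n$ and use the invariance identity in the form $m_{s_n}\varphi=\int_{\R^d}P(t,s_n)\varphi\,d\mu_t$ (and the analogous one for $s$). By Theorem \ref{Th:P(t,s)}(i) the map $(\sigma,x)\mapsto P(t,\sigma)\varphi(x)$ is continuous on $(-\infty,t)\times\R^d$, and by Theorem \ref{Th:P(t,s)}(iv) one has the uniform bound $\|P(t,s_n)\varphi\|_\infty\le\|\varphi\|_\infty$; dominated convergence on the probability space $(\R^d,\mu_t)$ then yields $m_{s_n}\varphi\to m_s\varphi$. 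For the more general case $u\in C_b(\R^{1+d})$, I would split
\begin{eqnarray*}
m_{s_n}u(s_n,\cdot)-m_su(s,\cdot)=m_{s_n}\bigl(u(s_n,\cdot)-u(s,\cdot)\bigr)+\bigl(m_{s_n}-m_s\bigr)u(s,\cdot),
\end{eqnarray*}
treat the second summand by the previous step applied to $\varphi=u(s,\cdot)$, and handle the first one by choosing $R=R(\eps)$ from the tightness statement of Theorem \ref{Th:P(t,s)}(v) and using uniform continuity of $u$ on the compact set $[s-1,s+1]\times\overline{B(0,R)}$ to control the integral over $B(0,R)$, while the integral over its complement is bounded by $2\|u\|_\infty\eps$.

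For (b), both statements follow the same tightness-plus-local-uniform-convergence pattern. Given $\eps>0$, pick $R=R(\eps)$ with $\mu_s(B(0,R)^c)\le\eps$ for every $s\in\R$, and let $M:=\|\varphi\|_\infty+\sup_n\|\varphi_n\|_\infty$, finite by assumption. The estimate
\begin{eqnarray*}
\|\varphi-\varphi_n\|_{L^p(\R^d,\mu_s)}^p\le\|\varphi-\varphi_n\|_{L^\infty(B(0,R))}^p+(2M)^p\eps
\end{eqnarray*}
is uniform in $s$, so letting $n\to\infty$ with $\eps$ fixed (using local uniform convergence) and then $\eps\to 0$ gives \eqref{lim_unif}. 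For \eqref{lim_unif_P(t,s)} I would repeat the same argument after writing $P(t,s)(\varphi-\varphi_n)(x)=\int_{\R^d}(\varphi-\varphi_n)\,dp_{t,s,x}$ and using tightness of the family $\{p_{t,s,x}:t>s,\;x\in B(0,r)\}$, which was noted right after Theorem \ref{Th:P(t,s)} as a consequence of \eqref{P(t,s)V}.

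For (c), since each $p_{t,s,x}$ is a probability measure, Jensen's inequality yields the pointwise bound $|P(t,s)\varphi(x)|^p\le P(t,s)|\varphi|^p(x)$ for every $\varphi\in C_b(\R^d)$; integrating against $\mu_t$ and using the invariance identity on $|\varphi|^p\in C_b(\R^d)$ gives \eqref{contraction} on $C_b(\R^d)$, and a density argument (continuous bounded functions are dense in $L^p(\R^d,\mu_s)$ since $\mu_s$ is a Borel probability measure on $\R^d$) extends $P(t,s)$ to the claimed bounded operator. The main obstacle is really only the bookkeeping in the general case of (a), where the $s$-dependence of $u(s,\cdot)$ must be separated from the $s$-dependence of $\mu_s$; everything else is a combination of the invariance identity, Jensen's inequality, and the uniform tightness provided by Hypothesis \ref{hyp1}(iii) via \eqref{P(t,s)V}--\eqref{m_tV}.
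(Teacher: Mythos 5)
Your proposal is correct and follows essentially the same route as the paper's proof: the invariance identity plus dominated convergence for the first part of (a), the tightness/local-uniform-convergence splitting for the second part of (a) and for (b), and Jensen's inequality with the invariance identity and density of $C_b(\R^d)$ for (c). No gaps.
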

\begin{proof}
The first part of   statement (a) is an easy consequence of the
continuity of $P(t,s)\varphi $ with respect to $s$. Indeed, fix
$s_0\in \R$ and $t\geq s_0+1$. For $s\in (s_0-1, s_0+1)$ we have
\begin{eqnarray*}
m_s\varphi  - m_{s_0}\varphi   =  \int_{\R^d}(P(t,s)\varphi  (y)-
P(t,s_0)\varphi (y))\mu_t(dy).
\end{eqnarray*}
By Theorem \ref{Th:P(t,s)}(i), for every $y\in \R^d$ we have
$\lim_{s\to s_0}P(t,s)\varphi  (y)- P(t,s_0)\varphi (y)=0$; moreover
$|P(t,s)\varphi  (y)- P(t,s_0)\varphi(y)|\leq 2\|f\|_{\infty}$.
Therefore, $\lim_{s\to s_0} m_s\varphi  - m_{s_0}\varphi   =0$.

Let us prove the second part of   statement (a). Fix $s_0\in \R$.
Then,
\begin{eqnarray*}
|m_su(s, \cdot) - m_{s_0}u(s_0, \cdot) | \leq  |
m_s (u(s, \cdot) - u(s_0, \cdot))| + |m_s u(s_0, \cdot) - m_{s_0}u(s_0, \cdot) |, \quad s\in \R.
\end{eqnarray*}
By the first part of the statement, $\lim_{s\to s_0}  |m_s u(s_0,
\cdot) - m_{s_0}u(s_0, \cdot) | =0$.  To estimate $| m_s (u(s,
\cdot) - u(s_0, \cdot))|$ we use  Theorem  \ref{Th:P(t,s)}(v). Given
$\eps
>0$, let $R>0$ be such that $\mu_s(\R^d \setminus B(0,R)) \leq \eps$
for every $s\in \R$. Then,
\begin{align*}
&  | m_s (u(s, \cdot) - u(s_0, \cdot))|\\[1mm]
&  \leq  \int_{B(0,R)} |u(s,y) - u(s_0,y)|\mu_s(dy) +  \int_{\R^d \setminus B(0,R)} |u(s,y) - u(s_0,y)|\mu_s(dy)
\\[1mm]
& \leq   \|u(s, \cdot) - u(s_0, \cdot)\|_{L^{\infty} (B(0,R))} +
2\eps\|u\|_{\infty}.
\end{align*}
Since $u$ is continuous, $ \|u(s, \cdot) - u(s_0,
\cdot)\|_{L^{\infty}(B(0,R))} \leq \eps$ for $|s-s_0|$ small enough.
Statement (a) follows.

The proof of statement (b) is similar.
Let  $M>0$ be such that  $\| \varphi_n\|_{\infty} \leq M$ for each  $n\in \N$. For every $\eps >0$ let $R>0$ be as above. Then,
\begin{align*}
\int_{\R^d}|\varphi-\varphi_n|^pd\mu_s
&=
\int_{B(0,R)}|\varphi-\varphi_n|^pd\mu_s +\int_{\R^d\setminus
B(0,R)}|\varphi-\varphi_n|^pd\mu_s
\\
& \le \|\varphi-\varphi_n\|_{L^{\infty}( B(0,R))} +
 (\|\varphi\|_{\infty} + M)^p\varepsilon .
\end{align*}
and \eqref{lim_unif} holds. The proof of \eqref{lim_unif_P(t,s)} is
the same, through the representation formula $P(t,s)\varphi(x) =
\int_{\R^d}\varphi(y)  p_{t,s,x}(dy)$ and the tightness of $\{
p_{t,s,x}: s<t, \;x\in B(0,r)\}$.

The proof of statement (c) is the same of the autonomous case. Indeed, for every $\varphi \in C_b(\R^d)$ we have, by Theorem \ref{Th:P(t,s)}(iii) and the H\"older inequality,
\begin{eqnarray*}
|P(t,s)\varphi (x)|^p  = \bigg| \int_{\R^d} \varphi(y)
p_{t,s,x}(dy)\bigg|^p \leq\int_{\R^d}|\varphi(y)|^p p_{t,s,x}(dy) =
P(t,s)|\varphi |^p(x),
\end{eqnarray*}
so that, integrating with respect to $\mu_t$, we get
\begin{eqnarray*}
\int_{\R^d} |P(t,s)\varphi |^p d\mu_t \leq \int_{\R^d}
P(t,s)|\varphi |^p d\mu_t = \int_{\R^d}  |\varphi |^p d\mu_s , \quad
t\geq s, \end{eqnarray*} i.e., $\varphi$ satisfies
\eqref{contraction}. Since $C_b(\R^d)$   is dense in $L^p(\R^d,
\mu_s)$, \eqref{contraction} holds for every $\varphi \in L^p(\R^d,
\mu_s)$.
\end{proof}

\subsection{Smoothing properties of   $P(t,s)$}

We recall some global smoothing properties of the evolution operator
$P(t,s)$ that have been proved in \cite{KLL,LZ} and will be
extensively used in this paper.

\begin{hyp}\label{hyp2}
\begin{enumerate}[{\rm (i)}]
\item
The first-order space derivatives of the data $q_{ij}$ and $b_i$ $(i,j=1,\ldots,d)$ exist and belong to $C^{\alpha /2, \alpha}_{\rm
loc}(\U \times \R^d )$;
\item
there are two upperly bounded functions $\zeta : \U \to \R_+$ and
$r: \U \times \R^{d}\to\R$ such that
\begin{eqnarray*}
\begin{array}{lll}
\qquad\;\;\;\;\;\;\;\langle \nabla_x b(s,x)\xi,\xi \rangle \leq r(s,x)|\xi|^2,
\quad & (s,x) \in \U \times \R^d , & \xi \in \R^d,\\[1.5mm]
\qquad\;\;\;\;\;\;\;|D_kq_{ij}(s,x)| \leq \zeta(s)\eta(s,x), \quad & (s,x) \in \U \times \R^d , &
  i,j, k=1,\dots,d,
\end{array}
\end{eqnarray*}
where $\eta(s,x)$ is the ellipticity constant at $(s,x)$ in Hypothesis
$\ref{hyp1}(ii)$.
\end{enumerate}
\end{hyp}

The following theorem has been proved in \cite{KLL}.

\begin{theorem}
\label{Th:gradP(t,s)infinito} Let Hypotheses  \ref{hyp1} and
\ref{hyp2} hold. Then, there exist positive constants $C_1,C_2>0$,
such that
\begin{enumerate}[{\rm (i)}]
\item
for every $\varphi \in C^1_b(\R^d)$ we have
\begin{eqnarray*}
\| \nabla_x P(t,s) \varphi  \|_{\infty} \leq C_1\|\varphi
\|_{C^1_b(\R^d)}, \qquad\;\, s<t\leq s+1;
\end{eqnarray*}
\item
for every $\varphi  \in C_b(\R^d)$ we have
\begin{equation}
\|\nabla_x P(t,s)\varphi  \|_{\infty} \leq \frac{C_2}{\sqrt{t-s}}\|
\varphi \|_{\infty}, \qquad\;\, s< t \leq s+1. \label{star-luca}
\end{equation}
\end{enumerate}
\end{theorem}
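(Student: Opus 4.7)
The plan is to prove both estimates by Bernstein's method, working first with smooth compactly supported $\varphi$ and then extending by density. Throughout set $u(t,x):=P(t,s)\varphi(x)$, which by Theorem \ref{Th:P(t,s)}(i) belongs to $C^{1+\alpha/2,2+\alpha}_{\loc}((s,\infty)\times\R^d)$ and, by Hypothesis \ref{hyp2}(i), its spatial derivatives are regular enough to differentiate the equation $D_t u=\A(t)u$ once more in the $x$-variable. The first goal is to derive, from Hypothesis \ref{hyp2}, a pointwise parabolic inequality for $|\nabla_x u|^2$.

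Applying $D_i$ to the equation and summing, a direct computation gives
\begin{align*}
(D_t-\A(t))|\nabla_x u|^2 &= 2\sum_{i,j,k} D_i q_{jk}\,D_{jk}u\,D_i u + 2\sum_{i,j} D_i b_j\,D_j u\,D_i u\\
&\quad - 2\sum_i \langle Q\nabla D_i u,\nabla D_i u\rangle.
\end{align*}
The last term is bounded above by $-2\eta |D^2u|^2$ by Hypothesis \ref{hyp1}(ii); the second term by $2\,\sup r\,|\nabla_x u|^2$ by Hypothesis \ref{hyp2}(ii); and the first, using $|D_k q_{ij}|\le\zeta(s)\eta$ together with Cauchy--Schwarz, by $\varepsilon\,\eta\,|D^2u|^2+K_\varepsilon\,\eta\,|\nabla_x u|^2$ for any $\varepsilon>0$. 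Choosing $\varepsilon=2$ absorbs the Hessian, yielding $(D_t-\A(t))|\nabla_x u|^2\le K|\nabla_x u|^2$ with $K$ depending only on $\sup r$ and $\sup\zeta$. For part (i), combining this with the maximum principle below gives $\||\nabla_x u(t,\cdot)|\|_\infty^2\le e^{K(t-s)}\||\nabla_x\varphi|\|_\infty^2$, which after extending to $\varphi\in C^1_b$ by the density argument of Corollary \ref{cor:continuity}(b) yields the first estimate.

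For part (ii) the same computation also gives $(D_t-\A(t))u^2=-2\langle Q\nabla_x u,\nabla_x u\rangle\le -2\eta_0|\nabla_x u|^2$. Motivated by this, consider the auxiliary function
\begin{equation*}
w(t,x):=(t-s)|\nabla_x u(t,x)|^2 + \beta\,u(t,x)^2,\qquad s<t\le s+1,
\end{equation*}
for a constant $\beta>0$ to be chosen. Then
\begin{equation*}
(D_t-\A(t))w \le |\nabla_x u|^2 + K(t-s)|\nabla_x u|^2 - 2\beta\eta_0|\nabla_x u|^2,
\end{equation*}
so taking $\beta:=(1+K)/(2\eta_0)$ makes the right-hand side nonpositive on $s<t\le s+1$. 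The maximum principle then forces $w(t,\cdot)\le \beta\|\varphi\|_\infty^2$, i.e.\ $(t-s)\||\nabla_x u(t,\cdot)|\|_\infty^2\le\beta\|\varphi\|_\infty^2$, which is exactly \eqref{star-luca}.

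The main obstacle is justifying the maximum principle for these auxiliary functions on the whole of $\R^d$, since $w$ is not known a priori to decay at infinity and the coefficients are unbounded. I would handle this by exploiting the Lyapunov function $V$ from Hypothesis \ref{hyp1}(iii): for each $\delta>0$ the function $w_\delta(t,x):=w(t,x)-\delta\,e^{a(t-s)}V(x)$ satisfies $(D_t-\A(t))w_\delta\le 0$ and tends to $-\infty$ as $|x|\to\infty$ uniformly in $t\in[s,s+1]$, so the classical maximum principle on the compact set where the maximum is attained applies; letting $\delta\to 0^+$ recovers the bound. The initial condition at $t=s^+$ is handled by first localizing $\varphi$ via $\varphi_n\in C^\infty_c(\R^d)$ approximating $\varphi$ locally uniformly, computing $w$ on $(s,s+1]\times\R^d$ with $u$ replaced by $P(\cdot,s)\varphi_n$ (where both $u(s,\cdot)=\varphi_n$ and $\nabla_x u(s,\cdot)=\nabla\varphi_n$ are genuine initial data), and then passing to the limit via the uniform estimate \eqref{lim_unif_P(t,s)} and interior Schauder bounds to reach arbitrary $\varphi\in C_b(\R^d)$ (resp.\ $C^1_b(\R^d)$ for part (i)).
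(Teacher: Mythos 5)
The paper itself does not prove this theorem (it is quoted from \cite{KLL}), and your overall strategy --- Bernstein's method with the auxiliary function $(t-s)|\nabla_x u|^2+\beta u^2$ --- is indeed the one used there. However, as written your argument has two genuine gaps. The first concerns the constant $K$: after Young's inequality the first-order term is controlled by $\varepsilon\,\eta\,|D^2u|^2+C_\varepsilon d^3\zeta(s)^2\,\eta(s,x)\,|\nabla_x u|^2$, and while the Hessian part is absorbed, the surviving coefficient is proportional to $\eta(s,x)$, which Hypothesis \ref{hyp1}(ii) bounds only from \emph{below}. (Taking $\eta\equiv\eta_0$ to make it bounded would turn Hypothesis \ref{hyp2}(ii) into $|D_kq_{ij}|\le\zeta\eta_0$, excluding the unbounded diffusion coefficients the paper allows; this is exactly why Theorem \ref{Th:gradP(t,s)puntuale} needs the extra assumption $\ell_p<\infty$.) So $(D_t-\A(t))|\nabla_x u|^2\le K|\nabla_x u|^2$ with finite $K$ is not available, and in particular your Gronwall argument for part (i), which uses $|\nabla_x u|^2$ alone, breaks down. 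The repair is to keep the term $C d^3\zeta^2\eta|\nabla_x u|^2$ and absorb it into the good term $-2\beta\langle Q\nabla_x u,\nabla_x u\rangle\le-2\beta\,\eta\,|\nabla_x u|^2$ produced by the $u^2$ part of the auxiliary function (using the pointwise $\eta$, not merely $\eta_0$, and choosing $\beta$ accordingly); this forces you to include $\beta u^2$ in part (i) as well, which is harmless since the right-hand side there is $\|\varphi\|_{C^1_b(\R^d)}$.

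The second gap is more fundamental and concerns your justification of the maximum principle. Subtracting $\delta e^{\lambda(t-s)}V$ forces $w_\delta\to-\infty$ as $|x|\to\infty$ only if one already knows that $w=o(V)$ at infinity, i.e.\ an a priori bound on $|\nabla_x u(t,x)|$ for large $|x|$ --- which is precisely the estimate being proved. Interior Schauder estimates give only local bounds whose constants degenerate as $|x|\to\infty$ because the coefficients are unbounded, so no such a priori control is available, and the argument is circular. The way this is resolved in \cite{KLL} is to run the Bernstein computation for the solutions $u_n$ of the approximating Cauchy--Dirichlet problems on the balls $B(0,n)$ used to construct $P(t,s)$, inserting a cutoff $\vartheta_n^2$ in front of $|\nabla_x u_n|^2$ in the auxiliary function so that on the lateral boundary the function reduces to $u_n^2$ and the classical maximum principle on $[s,s+1]\times\overline{B(0,n)}$ applies; the resulting constants are independent of $n$, and one then lets $n\to\infty$. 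Some device of this kind is indispensable; without it the appeal to a maximum principle on all of $\R^d$ does not go through.
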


As a consequence, we obtain
 \begin{equation}
 \label{boh}
\|\nabla_x P(t,s)\varphi \|_{\infty} \leq C_2 \|\varphi \|_{\infty},
\qquad\;\, t\geq s+1,
\end{equation}
for every $\varphi  \in C_b(\R^d)$. It is sufficient to recall that
$P(t,s)\varphi  = P(t,t-1)P(t-1,s)\varphi $ and that
$\|P(t-1,s)\varphi \|_{\infty} \leq \|\varphi \|_{\infty}$.

\begin{theorem}
\label{Th:gradP(t,s)puntuale} Let Hypotheses  \ref{hyp1} and
\ref{hyp2} hold and assume in addition that, for some $p>1$,
\begin{equation}
\ell_p :=  \sup_{(s,x)\in [0,T]\times\R^d}\left(r(s,x) +
\frac{d^3(\zeta(s))^2\eta(s,x)}{4\min\{p -1,1\}}\right)<\infty.
\label{ell_p}
\end{equation}
Then:
\begin{enumerate}[{\rm (i)}]
\item
for every $\varphi \in C^1_b(\R^d)$  we have
\begin{equation}
|\nabla_x P(t,s)\varphi(x)|^p \leq e^{p\ell_p(t-s)}P(t,s)|\nabla
\varphi |^p(x), \qquad\;\, t\geq s,\;\,x\in\R^d; \label{grad-punt}
\end{equation}
\item
there exists a positive constant $C_3= C_3(p)$ such that
\begin{equation}
|\nabla_x P(t,s)\varphi(x)|^p \leq C_3^p \max\{(t-s)^{-p/2}, \,1\}
e^{p\ell_p(t-s)}P(t,s)|\varphi |^p(x), \label{grad-punt-1}
\end{equation}
 for every $\varphi \in
C_b(\R^d)$,    $t> s$ and   $x \in \R^d$;
\item
if the diffusion coefficients $q_{ij}$ $(i,j=1,\ldots,d)$ are
independent of $x$, then
\eqref{grad-punt} holds for $p=1$ too, with
$\ell_1=r_0:=\sup_{(s,x)\in \U \times\R^d}r(s,x)$. Moreover, there
exists $C_4>0$, independent of $t$ and $s$, such that
\begin{equation}
\|\,|\nabla_x P(t,s)\varphi |\,\|_{\infty} \leq
C_4e^{r_0(t-s)}\|\varphi \|_{\infty}, \qquad\;\, t\geq s+1.
\label{grad-unif}
\end{equation}
\end{enumerate}
\end{theorem}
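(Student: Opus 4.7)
The natural strategy is Bernstein's method applied to an auxiliary function along the backward evolution $\tau\mapsto P(t,\tau)$. Fix $t>s$, $x_0\in\R^d$ and $\varphi\in C^1_b(\R^d)$ (extra smoothness justified by approximation); set $v(\tau,y):=P(\tau,s)\varphi(y)$, which by Theorem \ref{Th:P(t,s)}(i) is smooth in $(\tau,y)\in(s,\infty)\times\R^d$ and satisfies $\partial_\tau v=\A(\tau)v$. For $\delta>0$ I consider
\begin{eqnarray*}
H(\tau):=P(t,\tau)\bigl(f(\tau,\cdot)\bigr)(x_0),\qquad f(\tau,y):=\bigl(|\nabla_y v(\tau,y)|^2+\delta\bigr)^{p/2},
\end{eqnarray*}
for $\tau\in[s,t]$. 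Since $H(s)=P(t,s)((|\nabla\varphi|^2+\delta)^{p/2})(x_0)$ and $H(t)=(|\nabla_xP(t,s)\varphi(x_0)|^2+\delta)^{p/2}$, a differential inequality of the form $H'(\tau)\le p\ell_p H(\tau)$ will give (i) by Gronwall after $\delta\to 0^+$.

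The derivative of $H$ is computed via the identity $\frac{d}{d\tau}P(t,\tau)g(\tau,\cdot)=P(t,\tau)(\partial_\tau g-\A(\tau)g)$; writing $w:=|\nabla v|^2+\delta$ and $a_h:=\sum_iD_ivD_{ih}v$, the third-order derivatives of $v$ in $\partial_\tau f$ and $\A(\tau)f$ cancel thanks to $\partial_\tau v=\A(\tau)v$ and one obtains
\begin{eqnarray*}
\partial_\tau f-\A(\tau)f &=& p\,w^{\frac{p}{2}-1}\Big[\sum_{j,h,k}(D_jq_{hk})D_jvD_{hk}v+\sum_{j,k}(D_jb_k)D_jvD_kv-\sum_j\langle Q\nabla D_jv,\nabla D_jv\rangle\Big]\\
&&-\,p(p-2)\,w^{\frac{p}{2}-2}\langle Qa,a\rangle.
\end{eqnarray*}
Hypothesis \ref{hyp2}(ii) bounds the $b$-sum by $r|\nabla v|^2$, ellipticity bounds the Hessian sum from below by $\eta|\nabla^2 v|^2$, and $|D_kq_{ij}|\le\zeta\eta$ combined with Cauchy--Schwarz and Young's inequality produces $\bigl|\sum_{j,h,k}(D_jq_{hk})D_jvD_{hk}v\bigr|\le\frac{d^3\zeta^2\eta}{4\eps}|\nabla v|^2+\eps\eta|\nabla^2 v|^2$ for any $\eps>0$. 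For $p\ge 2$ the $(p-2)$-term has the favourable sign; for $1<p<2$ it is reabsorbed into the Hessian term by the Cauchy--Schwarz bound $\langle Qa,a\rangle\le w\sum_j\langle Q\nabla D_jv,\nabla D_jv\rangle$, at the cost of a factor $(p-1)^{-1}$. Taking $\eps=\min\{p-1,1\}$ then yields $\partial_\tau f-\A(\tau)f\le p\ell_p f$ with $\ell_p$ as in \eqref{ell_p}, and positivity of $P(t,\tau)$ gives $H'\le p\ell_pH$; this proves (i).

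For (ii) I restrict first to $s<t\le s+1$ and use the modified functional $K(\tau):=(\tau-s)^{p/2}e^{-p\ell_p(\tau-s)}P(t,\tau)((|\nabla v|^2+\delta)^{p/2})(x_0)+\gamma\,P(t,\tau)((|v|^2+\delta)^{p/2})(x_0)$. An analogous Bernstein-type calculation shows $\partial_\tau(|v|^2+\delta)^{p/2}-\A(\tau)(|v|^2+\delta)^{p/2}\le-p(p-1)\eta_0(|v|^2+\delta)^{p/2-2}v^2|\nabla v|^2\le 0$ for $p>1$; this non-positive contribution, with $\gamma$ chosen large enough, absorbs the singular term $O((\tau-s)^{p/2-1})|\nabla v|^2$ produced by the time-derivative of the prefactor $(\tau-s)^{p/2}e^{-p\ell_p(\tau-s)}$, yielding $K'(\tau)\le 0$. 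Evaluating at the endpoints and sending $\delta\to 0$ delivers (ii) on $s<t\le s+1$; the range $t\ge s+1$ follows by writing $P(t,s)=P(t,s+1)P(s+1,s)$, applying (i) to the outer factor and the short-time version of (ii) just established to the inner one. Part (iii) is then essentially free: when $Q$ is $x$-independent, the cross term $\sum(D_jq_{hk})D_jvD_{hk}v$ vanishes identically, the Young absorption responsible for the $(p-1)^{-1}$ factor disappears, and the proof of (i) goes through for $p=1$ with $\ell_1=r_0$; the uniform bound \eqref{grad-unif} then comes from $P(t,s)\varphi=P(t,s+1)[P(s+1,s)\varphi]$, the $p=1$ pointwise estimate applied to the outer factor, and \eqref{star-luca} bounding $\|\nabla P(s+1,s)\varphi\|_\infty\le C_2\|\varphi\|_\infty$. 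I expect the main obstacle to be the $1<p<2$ regime in the second paragraph: extracting the precise constant $\ell_p$ requires the delicate Cauchy--Schwarz juggling of the $(p-2)$-term alluded to above.
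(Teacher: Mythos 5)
The paper does not actually reprove these estimates: part (i) and the first half of (iii) are quoted from \cite{KLL}, the short-time version of (ii) is quoted from \cite[Prop.~3.3]{LZ}, and the only argument supplied is the composition $P(t,s)=P(t,s+1)P(s+1,s)$ that upgrades the short-time bound to $t-s>1$ and yields \eqref{grad-unif}. Your attempt to supply the underlying Bernstein computations is therefore a genuinely more self-contained route. For (i) and (iii) the computation is the right one: the formula for $\partial_\tau f-\A(\tau)f$, the cancellation of third-order terms, the Cauchy--Schwarz bound $\langle Qa,a\rangle\le w\sum_j\langle Q\nabla D_jv,\nabla D_jv\rangle$ that handles the $(p-2)$-term when $1<p<2$, and the choice $\eps=\min\{p-1,1\}$ reproduce exactly the constant in \eqref{ell_p}; your derivations of \eqref{grad-unif} and of the long-time half of (ii) by composition coincide with the paper's. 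You should, however, be explicit that justifying $\frac{d}{d\tau}P(t,\tau)g(\tau,\cdot)=P(t,\tau)(\partial_\tau g-\A(\tau)g)$ for the non-compactly-supported function $g=f(\tau,\cdot)$, whose image under $\A(\tau)$ is unbounded, is the technically delicate point here (Theorem \ref{Th:P(t,s)}(ii) gives the identity only for $C^\infty_c$ data), and that when $\ell_p<0$ the $\delta$-regularization leaves an $O(\delta^{\min\{p/2,1\}})$ error that must be discarded at the end.

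The genuine gap is in your short-time proof of (ii) for $p\neq2$. With $K(\tau)=(\tau-s)^{p/2}e^{-p\ell_p(\tau-s)}P(t,\tau)\bigl((|\nabla v|^2+\delta)^{p/2}\bigr)+\gamma\,P(t,\tau)\bigl((|v|^2+\delta)^{p/2}\bigr)$, the singular term produced by differentiating the prefactor is $\tfrac{p}{2}(\tau-s)^{p/2-1}e^{-p\ell_p(\tau-s)}P(t,\tau)\bigl((|\nabla v|^2+\delta)^{p/2}\bigr)$ --- of order $|\nabla v|^{p}$, not ``$O((\tau-s)^{p/2-1})|\nabla v|^{2}$'' as you write --- whereas the nonpositive term you propose to absorb it with is of order $(v^2+\delta)^{p/2-2}v^2|\nabla v|^2\sim|v|^{p-2}|\nabla v|^2$. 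For $p\neq2$ these are not pointwise comparable (one would need $|\nabla v|^{p-2}\lesssim|v|^{p-2}$), so no choice of $\gamma$ gives $K'\le0$. The fix, and what \cite[Prop.~3.3]{LZ} actually does, is to put both quantities inside a single power, e.g.\ $\Phi(\tau)=P(t,\tau)\bigl((\gamma v^2+(\tau-s)|\nabla v|^2+\delta)^{p/2}\bigr)$: then the singular term and the carr\'e du champ term coming from $v^2$ carry the same prefactor $(\cdots)^{p/2-1}$ and the absorption is pointwise, at the cost of additional cross terms to control. For $p=2$ your functional coincides with this one and does work, which may be why the mismatch escaped notice.
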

\begin{proof}
Estimates \eqref{grad-punt} and \eqref{grad-unif} have been proved
in \cite{KLL}. To be precise, in \cite[Cor. 4.6]{KLL} estimate
\eqref{grad-unif} is stated as $\|\,|\nabla_x P(t,s)\varphi
|\,\|_{\infty} \leq  Ce^{\ell_p (t-s)}\|\varphi \|_{\infty}$, with
$C$ independent of $p$. If the diffusion coefficients are
independent of $x$, we can take $\zeta\equiv 0$. Hence, $\ell_p=r_0$
and \eqref{grad-unif} follows.

In (the proof of) \cite[Prop.~3.3]{LZ}, an estimate similar to
\eqref{grad-punt-1} has been proved with a worse exponential term.
To get \eqref{grad-punt-1} it is sufficient to observe that  for
$t-s\leq 1$,  \cite[Prop.~3.3]{LZ} gives

\begin{equation}
\label{1} |\nabla_x P(t,s)\varphi(x)|^p \leq
\frac{K^p}{(t-s)^{\frac{p}{2}}}P(t,s)|\varphi |^p(x), \qquad\;\, x
\in \R^d,
\end{equation}
for some positive constant $K=K(p)$, independent of $s,t$ and $\varphi$. If
$t-s>1$, we write $P(t,s)\varphi =P(t,s+1)P(s+1,s)\varphi $. From
\eqref{grad-punt} and \eqref{1}  we   obtain
\begin{align*}
|\nabla_x P(t,s)\varphi(x)|^p&\le e^{p\ell_p(t-s-1)}P(t,s+1)|\nabla_xP(s+1,s)\varphi |^p(x)\\
&\le K^pe^{p\ell_p(t-s-1)}P(t,s+1)P(s+1,s)|\varphi |^p(x)\\
&= K^pe^{p\ell_p(t-s-1)}P(t,s)|\varphi |^p(x),
\end{align*}
for any $x\in\R^d$. Estimate \eqref{grad-punt-1} follows.
\end{proof}

\begin{remark}
{\rm Two remarks are in order.
\begin{enumerate}[(a)]
\item
Estimate \eqref{grad-punt-1} implies that $ P(t,s)$ maps
$L^p(\R^{d}, \mu_s)$ into $W^{1,p}(\R^{d}, \mu_t)$ for $p>1$, and
\begin{equation}
\label{grad-p} \|\,|\nabla_x P(t,s)\varphi |\, \|_{L^p(\R^d, \mu_t)}
\leq C_3 \max\{ (t-s)^{-1/2}, \,1\} e^{ \ell_p(t-s)} \|  \varphi
\|_{L^p(\R^d, \mu_s)}.
\end{equation}
This is not true in general for $p=1$, even in the autonomous case. See e.g., \cite{MPP}
for a counterexample given by the  Ornstein-Uhlenbeck semigroup.
\item
Estimates   \eqref{grad-p}   are sharp near $t=s$, but they are not
for $t\gg s$  if $\ell_p>0$. In this case for   $t>s+1$ we write
$P(t,s)  = P(t,t-1)P(t-1,s)$, and using \eqref{contraction}   we
obtain
\begin{equation}
\label{grad-p>1}
\;\;\;\|\,|\nabla_x P(t,s)\varphi|\,\|_{L^p(\R^{d},
\mu_t)} \leq C_3e^{\ell_p}\|\varphi \|_{L^p(\R^{d}, \mu_s)} , \quad
t\geq s+1, \; \varphi \in L^p(\R^{d}, \mu_s).
\end{equation}
\end{enumerate}
}
\end{remark}


\subsection{The evolution semigroup}

The evolution semigroup $\T(t)$ is defined on continuous and bounded functions $f$ by
\begin{eqnarray*}
\T(t)f(s,x)=P(s,s-t)f(s-t,\cdot)(x),\qquad\;\,(s,x)\in  \R^{1+d}
,\;\,t\ge 0.
\end{eqnarray*}
In \cite[Prop.~6.1]{KLL} we have shown that $\T(t)$ is a semigroup
of positive contractions   in $C_b( \R^{1+d} )$. Since $
P(s+T,s+T-t) = P(s,s-t)$, $ \T(t)$ leaves $C_{b} (\U \times  \R^{d
})$ invariant for every $t>0$.

$\T(t)$ is not strongly continuous in $C_{b} (\U \times \R^d
)$. However,  the last part of
the proof of \cite[Prop.~6.1]{KLL} implies that, for any $f\in
C_{b} (\U \times \R^d )$ and any $t_0\ge 0$, $\T(t)f$ tends to
$\T(t_0)f$, locally uniformly in $\U \times \R^d $ as $t\to t_0$.

In the language of \cite{DPZ}, $\T(t)$ is a stochastically
continuous Markov semigroup. It improves spatial regularity, as the
next lemma shows.

\begin{lemma}
\label{le:derivateT(t)}
For every $t>0$ and $f\in C_{b} (\U \times \R^d )$, the  derivatives $D_{i}\T(t)f$, $D_{ij}\T(t)f$ exists and are continuous in $\U\times\R^d$ for $i, j=1, \ldots ,d$.
\end{lemma}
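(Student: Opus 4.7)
The plan is to separate the two assertions and reduce joint continuity to interior parabolic Schauder estimates after a semigroup splitting at a fixed intermediate time. For the existence of $D_i\T(t)f(s,\cdot)$ and $D_{ij}\T(t)f(s,\cdot)$ at fixed $s$, Theorem~\ref{Th:P(t,s)}(i) applied to $\varphi:=f(s-t,\cdot)\in C_b(\R^d)$ shows that $(r,y)\mapsto P(r,s-t)\varphi(y)$ lies in $C^{1+\alpha/2,2+\alpha}_{\loc}((s-t,\infty)\times\R^d)$; specialising at $r=s$ (legitimate because $t>0$) yields the existence of the derivatives and their continuity in $x$.

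Joint continuity in $(s,x)$ is the substantive part. Fix $(s_0,x_0)\in\U\times\R^d$, choose $\eps\in(0,t/2)$ and set $\sigma:=s_0-t+\eps$. For $|s-s_0|<\eps$ one has $s-t<\sigma<s$, so by the evolution law
$$\T(t)f(s,x)=P(s,\sigma)g_s(x),\qquad g_s:=P(\sigma,s-t)f(s-t,\cdot)\in C_b(\R^d),\ \|g_s\|_\infty\le\|f\|_\infty.$$
Setting $\tau(s):=\sigma-s+t>0$, we recognise $g_s=\T(\tau(s))f(\sigma,\cdot)$, so the stochastic continuity of $\T$ recalled just above gives $g_s\to g_{s_0}$ locally uniformly in $\R^d$ as $s\to s_0$. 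The function $h_s(r,y):=P(r,\sigma)(g_s-g_{s_0})(y)$ is then a uniformly bounded classical solution of $u_r=\A(r)u$ on the \emph{fixed} cylinder $(\sigma,\infty)\times\R^d$; combining the integral representation \eqref{representation} with the tightness of $\{p_{r,\sigma,y}:r>\sigma,\,y\in B(0,R)\}$ provided by \eqref{P(t,s)V}, the local uniform convergence of $g_s$ together with its uniform $L^\infty$ bound lifts to $\sup_{K'}|h_s|\to 0$ on every compact $K'\Subset(\sigma,\infty)\times\R^d$. Interior parabolic Schauder estimates, applicable thanks to the uniform ellipticity and local Hölder regularity of the coefficients in Hypothesis~\ref{hyp1}, then yield $\|h_s\|_{C^{1+\alpha/2,2+\alpha}(K)}\to 0$ on a slightly smaller compact neighbourhood $K$ of $(s_0,x_0)$.

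To conclude, for any first- or second-order spatial multi-index $\beta$ we decompose
$$D^\beta_x\T(t)f(s,x)-D^\beta_x\T(t)f(s_0,x_0)=D^\beta_xh_s(s,x)+\bigl[D^\beta_xP(s,\sigma)g_{s_0}(x)-D^\beta_xP(s_0,\sigma)g_{s_0}(x_0)\bigr];$$
the first summand vanishes as $s\to s_0$ by the Schauder argument above, and the bracketed difference vanishes as $(s,x)\to(s_0,x_0)$ by Theorem~\ref{Th:P(t,s)}(i) applied to the fixed datum $g_{s_0}$. The main obstacle is the \emph{double} dependence of $\T(t)f(s,x)$ on $s$ — through both the length of the evolution $P(s,s-t)$ and through the initial datum $f(s-t,\cdot)$; the decisive device is the splitting at a fixed intermediate time $\sigma$, which absorbs the varying initial datum into a bounded, locally uniformly convergent family $g_s$ and places $(s_0,x_0)$ safely in the interior of a fixed parabolic cylinder where Schauder estimates hold uniformly in $s$.
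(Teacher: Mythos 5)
Your argument is correct, and it reaches the conclusion by a route that differs from the paper's in one essential point. The paper also reduces the lemma to interior Schauder theory, but without any intermediate-time splitting: it first extracts from the interior Schauder estimates a uniform bound $\sup_{|t-t_0|\le\delta,\,|r-r_0|\le\delta}\|P(t,r)\varphi\|_{C^{2+\alpha}(B(x_0,R))}\le C\|\varphi\|_{\infty}$, and then applies the interpolation inequalities $\|D_i\psi\|_{C(B(x_0,R))}\le K\|\psi\|_{C(B(x_0,R))}^{(1+\alpha)/(2+\alpha)}\|\psi\|_{C^{2+\alpha}(B(x_0,R))}^{1/(2+\alpha)}$ (and the analogue for $D_{ij}$) directly to the difference $\psi=P(t,r)f(r,\cdot)-P(t_0,r_0)f(r_0,\cdot)$; the known joint continuity of $(t,r,x)\mapsto P(t,r)f(r,\cdot)(x)$ then makes the right-hand side vanish. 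This works even though $\psi$ is not a solution of a single parabolic problem (its two terms have different initial times), which is precisely the obstruction your splitting at the fixed time $\sigma$ is designed to remove. Your version instead applies the Schauder estimate to the genuine solution $h_s=P(\cdot,\sigma)(g_s-g_{s_0})$ on a fixed cylinder, at the price of two extra inputs: the local uniform convergence $g_s\to g_{s_0}$ (via the stochastic continuity of $\T(t)$) and the tightness of the kernels $p_{r,\sigma,y}$ furnished by \eqref{P(t,s)V}, both of which are available and correctly invoked. In short, the paper's interpolation trick handles the varying initial time and varying datum in one stroke and is slightly shorter, while yours dispenses with the interpolation theory of H\"older spaces and stays closer to the elementary fact that a locally uniformly small solution has locally uniformly small derivatives. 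Both are complete proofs.
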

\begin{proof} Since
$\T(t)f(s,x) = P(s,s-t)f(s-t, \cdot)(x)$, it is sufficient to show
that the first and second order space derivatives of the function
$(t,r,x)\mapsto P(t,r)f(r, \cdot)$ are continuous with respect to
$(t,r,x)\in \Lambda$. For any $(t_0,r_0,x_0) \in \Lambda$, fix
$\delta >0$ such that $t_0-\delta
> r_0+\delta$. The classical interior Schauder estimates (e.g.,
\cite[Thm.~3.5]{friedman})  imply that  for any $R>0$   there exists
a positive constant $C $ such that
\begin{equation}
\sup_{|t-t_0|\leq \delta, \, |r-r_0|\leq \delta}
 \|P(t,r)\varphi \|_{C^{ 2+\alpha}( B(x_0,R))} \le C\|\varphi \|_{\infty},
 \label{stima-KLL}
\end{equation}
for every $\varphi \in C_b(\R^d)$.   Applying
the interpolatory  estimates
\begin{equation}
\label{interp}
\|D_{i} \psi\|_{C(B(x_0,R)) }    \le
K_1\|\psi\|_{C(B(x_0,R)) } ^{\frac{1+\alpha}{2+\alpha}}
\|\psi\|_{C^{2+\alpha}( B(x_0,R) )}^{\frac{1}{2+\alpha}}, \quad i=1, \ldots, d,
\end{equation}
(which hold for every $\psi\in C^{2+\alpha}( B(x_0,R))$   and some
positive constant $K_1=K_1(\alpha, R)$, see e.g., \cite[Sect.~
4.5.2, Rem.~2]{Triebel}) to the function $\psi= P(t,r)f(r,\cdot)-
P(t_0,r_0)f(r_0,\cdot)$ with  $t\in [t_0-\delta, t_0+\delta]$,
$r\in[r_0-\delta, r_0+\delta] $, we   deduce
\begin{align*}
&\|D_iP(t,r)f(r,\cdot)- D_iP(t_0,r_0)f(r_0,\cdot)\|_{C(B(x_0,R)) }
\\
\le & K_1\|P(t,r)f(r,\cdot)- P(t_0,r_0)f(r_0,\cdot)\|_{C(B(x_0,R)) }
^{\frac{1+\alpha}{2+\alpha}}
\\
&\qquad \times (\|P(t,r)f(r, \cdot)\|_{C^{ 2+\alpha}( B(x_0,R))}
+\|P(t_0,r_0)f(r_0 ,\cdot ) \|_{C^{ 2+\alpha}( B(x_0,R))}
)^{\frac{1}{2+\alpha}}
\\
\le & K_1 \|P(t,r)f(r,\cdot)- P(t_0,r_0)f(r_0,\cdot)\|_{C(B(x_0,R))
} ^{\frac{1+\alpha}{2+\alpha}}
(2C\|f\|_{\infty})^{\frac{1}{2+\alpha}},
\end{align*}
where the last inequality follows from \eqref{stima-KLL}. Since
$(t,r,x)\mapsto P(t,r)f(r,\cdot)(x)$ is a continuous function in
$\Lambda$, the right-hand side vanishes as $(t,r)\to (t_0,r_0)$, and this implies that $D_iP(t,r)f(r,\cdot)(x)$ is continuous in $(t,r,x)\in \Lambda$.

Using the interpolatory estimates (\cite[Sect.~ 4.5.2, Rem.~2]{Triebel})
\begin{eqnarray*}
\|D_{ij} \psi\|_{C(B(x_0,R)) }    \le K_2\|\psi\|_{C(B(x_0,R)) } ^{\frac{ \alpha}{2+\alpha}}
\|\psi\|_{C^{2+\alpha}( B(x_0,R))}^{\frac{2}{2+\alpha}}, \quad i, j=1, \ldots, d,
\end{eqnarray*}
instead of \eqref{interp}, the same procedure yields that
$D_{ij}P(t,r)f(r,\cdot)(x)$ is continuous  in $\Lambda$ for any
$i,j=1,\ldots,d$.
 \end{proof}

The generator $G_{\infty}$ of $\T(t)$   in $C_{b}
(\U \times \R^d )$ may be defined through its resolvent. Namely, for
every $\lambda >0$, $D(G_{\infty})$ is the range of the operator
\begin{eqnarray*}
u\mapsto v(s,x):=  \int_{0}^{\infty} e^{-\lambda t}\T(t)u(s,x)dt ,
\end{eqnarray*}
and $G_{\infty}v = \lambda v -u$. The following result is taken from \cite[Prop. 6.3]{LZ}.

\begin{theorem}
Under Hypothesis \ref{hyp1} we have
\begin{eqnarray*}
D(G_{\infty})  = \bigg\{ f\in\bigcap_{q<\infty} W^{1,2}_{q,\rm loc}( \U \times \R^d, ds\times dx)
\cap C_{b}(\U \times \R^d ),\;\G f\in C_{b}(\U \times \R^d)\bigg\},
\end{eqnarray*}
where   ${\mathcal G}f (s,x) =  {\mathcal A}(s)f(s, \cdot)(x) -D_s f(s,x)$. Moreover, $D(G_{\infty}) $ coincides with the set of
the functions $u\in C_{b} (\U \times \R^d )$ such that
$\sup_{0<t\leq 1}t^{-1}\|\T(t)u - u\|_{\infty}<\infty$ and there
exists $g\in C_{b} (\U
\times \R^d )$ such that $t^{-1}(\T(t)u - u)\to g$, locally uniformly
in $\U \times \R^d$, as $t\to 0$.
\end{theorem}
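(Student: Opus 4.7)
My plan is to handle the two characterisations of $D(G_\infty)$ in parallel, pivoting on the resolvent representation $v=R(\lambda)u:=\int_0^\infty e^{-\lambda t}\T(t)u\,dt$ for $\lambda>0$ and $u\in C_b(\U\times\R^d)$, which, after the change of variables $\tau=s-t$, reads
\begin{equation*}
v(s,x)=\int_{-\infty}^s e^{-\lambda(s-\tau)}P(s,\tau)u(\tau,\cdot)(x)\,d\tau.
\end{equation*}
Every element of $D(G_\infty)$ has this form and $G_\infty v=\lambda v-u$, so it suffices to prove that (i) this $v$ lies in $\bigcap_{q<\infty}W^{1,2}_{q,\rm loc}\cap C_b$ with $\G v=\lambda v-u\in C_b$, (ii) any $w\in\bigcap_{q<\infty}W^{1,2}_{q,\rm loc}\cap C_b$ with $\G w\in C_b$ is recovered as $R(\lambda)(\lambda w-\G w)$, and (iii) the pointwise difference quotient $t^{-1}(\T(t)u-u)$ converges locally uniformly with a uniform sup bound if and only if $u=R(\lambda)h$ for some $h\in C_b$.

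For (i) I would first justify differentiating under the integral in $x$ twice using Lemma \ref{le:derivateT(t)} together with the local Schauder bound \eqref{stima-KLL}, which controls $\|P(s,\tau)u(\tau,\cdot)\|_{C^{2+\alpha}(B(x_0,R))}$ by $\|u\|_\infty$ uniformly on compact sets of $\{s-\tau\ge\delta\}$, and split $\int_{-\infty}^s=\int_{-\infty}^{s-\delta}+\int_{s-\delta}^s$ to absorb the short-time singularity using the exponential weight $e^{-\lambda(s-\tau)}$. Next I would apply Theorem \ref{Th:P(t,s)}(ii) and a density argument to differentiate in $s$, obtaining the pointwise identity $\lambda v-\A(s)v+D_sv=u$, i.e.\ $\lambda v-\G v=u$. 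The parabolic interior $L^q$ (Calder\'on--Zygmund) estimates applied on cylinders $(s_0-\delta,s_0+\delta)\times B(x_0,R)$ then upgrade $v$ to $W^{1,2}_{q,\rm loc}$ for every $q<\infty$ from the already established $C_b$ bounds on $v$ and $u$.

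For (ii) the main obstacle is uniqueness: given $w\in\bigcap_{q<\infty}W^{1,2}_{q,\rm loc}\cap C_b$ with $\lambda w-\G w=0$, I must conclude $w\equiv 0$. This is a maximum principle for bounded strong solutions of the backward-parabolic equation $\lambda w-\A(s)w+D_sw=0$ on $\U\times\R^d$. The Lyapunov function $V$ of Hypothesis \ref{hyp1}(iii) does the job in the standard way: the auxiliary function $w_\eps(s,x)=w(s,x)-\eps(V(x)+a/c)$ satisfies $\lambda w_\eps-\G w_\eps\le -\lambda\eps(V+a/c)+\eps cV\le 0$ for $\eps$ small, tends to $-\infty$ as $|x|\to\infty$, is $T$-periodic in $s$, so attains a maximum at some $(s_\ast,x_\ast)$; writing the parabolic maximum principle in the strong ($W^{1,2}_{q}$) form (Aleksandrov--Bakelman--Pucci, or simply the classical parabolic maximum principle after mollifying in $s$ and using periodicity) forces $\sup w\le \eps\sup(V+a/c)+$ lower-order terms; letting $\eps\downarrow 0$ yields $w\le 0$, and the same argument on $-w$ closes the case. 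Once uniqueness is in hand, (ii) follows because both $w$ and $R(\lambda)(\lambda w-\G w)$ solve the same equation and are bounded.

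For (iii), if $v=R(\lambda)u$, the semigroup law gives
\begin{equation*}
\frac{\T(t)v-v}{t}=\frac{e^{\lambda t}-1}{t}\int_t^\infty e^{-\lambda\sigma}\T(\sigma)u\,d\sigma-\frac{1}{t}\int_0^t e^{-\lambda\sigma}\T(\sigma)u\,d\sigma,
\end{equation*}
whose right-hand side is bounded by a constant times $\|u\|_\infty$ for $0<t\le 1$ and, thanks to the local uniform continuity of $t\mapsto\T(t)u$ recalled after Lemma \ref{le:derivateT(t)}, converges locally uniformly to $\lambda v-u=G_\infty v$ as $t\to 0^+$. Conversely, if $u\in C_b(\U\times\R^d)$ satisfies $\sup_{0<t\le 1}t^{-1}\|\T(t)u-u\|_\infty<\infty$ and $t^{-1}(\T(t)u-u)\to g$ locally uniformly with $g\in C_b$, then, since $\T(t)$ is a contraction, one may pass to the limit under the resolvent integral by dominated convergence to obtain $R(\lambda)g=\lambda R(\lambda)u-u$, so $u=R(\lambda)(\lambda u-g)\in D(G_\infty)$ with $G_\infty u=g$. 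The delicate step here is justifying the dominated convergence: the uniform bound $\sup_{0<s\le 1}s^{-1}\|\T(s)u-u\|_\infty<\infty$ together with $\T(t+s)u-\T(t)u=\T(t)(\T(s)u-u)$ and the contractivity of $\T(t)$ gives $\sup_{0<s\le 1}s^{-1}\|\T(t+s)u-\T(t)u\|_\infty<\infty$ for every $t\ge 0$, so the integrand is dominated by an $L^1$ function in $t$ against $e^{-\lambda t}$. I expect step (ii), the maximum principle for strong solutions, to be the single main obstacle; everything else is bookkeeping on top of Theorem \ref{Th:P(t,s)} and Lemma \ref{le:derivateT(t)}.
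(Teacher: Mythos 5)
The paper offers no proof of this theorem to compare against: it is quoted verbatim from \cite[Prop.~6.3]{LZ}. Judged on its own terms, your architecture --- the resolvent representation of $D(G_{\infty})$, interior parabolic $L^q$ estimates for the Sobolev regularity, a Lyapunov-function maximum principle for uniqueness, and the semigroup-law identity for the difference quotients --- is the standard and correct one, and your steps (ii) and (iii) are sound modulo the strong (Bony-type) maximum principle for $W^{1,2}_{q,\rm loc}$ subsolutions that you already flag as the main technical point.

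There is, however, a genuine gap at the start of step (i). You propose to obtain the pointwise identity $\lambda v-\A(s)v+D_sv=u$ by differentiating $v(s,x)=\int_{-\infty}^{s}e^{-\lambda(s-\tau)}P(s,\tau)u(\tau,\cdot)(x)\,d\tau$ twice in $x$ (and once in $s$) under the integral sign, with the short-time singularity near $\tau=s$ ``absorbed by the exponential weight''. It is not: $e^{-\lambda(s-\tau)}\to 1$ as $\tau\to s^-$, while for $u(\tau,\cdot)$ merely bounded the second-order space derivatives of $P(s,\tau)u(\tau,\cdot)$ blow up like $(s-\tau)^{-1}$ (compare the rate $(t-s)^{-1/2}$ for first derivatives in \eqref{star-luca}), and $\int_{s-\delta}^{s}(s-\tau)^{-1}\,d\tau=\infty$. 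The same non-integrable singularity reappears in the term $\int_{-\infty}^{s} e^{-\lambda(s-\tau)}\A(s)P(s,\tau)u(\tau,\cdot)\,d\tau$ produced when you differentiate in $s$. This is not a removable technicality: elements of $D(G_{\infty})$ are in general only in $W^{1,2}_{q,\rm loc}$, not $C^{1,2}$, which is exactly why the theorem is stated with Sobolev rather than classical regularity; a proof that begins by establishing classical second derivatives of $v$ cannot succeed. The repair is to reverse your order of operations: first show that $v$ solves $\lambda v-\G v=u$ in the sense of distributions (test against $\psi\in C^{\infty}_c(\U\times\R^d)$, use Fubini and the fact that $(t,x)\mapsto P(t,\tau)\varphi(x)$ is a classical solution of the parabolic equation for $t>\tau$, and integrate the operator by parts onto $\psi$), and only then invoke the interior Calder\'on--Zygmund estimates --- which you already cite --- to conclude $v\in\bigcap_{q<\infty}W^{1,2}_{q,\rm loc}$ together with the a.e.\ equation, whence $\G v=\lambda v-u\in C_b$. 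With that reordering the rest of your plan goes through.
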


For every  $T$-periodic evolution system of measures $\{ \nu_s:
s\in \R\}$ for $P(t,s)$, the measure $\nu (ds,dx) := \frac{1}{T} \,
\nu_s(dx) ds$ is invariant for $\T(t)$.  Indeed, the first part of
the proof of Corollary \ref{cor:continuity} shows that, for each
$\varphi\in C_b(\R^d)$, the function $s\mapsto \int_{\R^d}\varphi
\,d\nu_{s}$ is continuous in $\R$. Hence, for every Borel set
$\Gamma \subset \R^d$ the function $s\mapsto \nu_s(\Gamma )$ is
Lebesgue measurable, and $\nu$ is well defined. Moreover, for every
$f\in  C_{b} (\U \times  \R^{d })$ we have
\begin{align*}
\int_{(0,T)\times \R^{d}}\T (t)fd\nu & = \frac{1}{T} \int_0^Tds
\int_{\R^d}P(s,s-t)f(s-t,\cdot)d\nu_s
\\
& = \frac{1}{T}\int_0^Tds
\int_{\R^d}f(s-t,\cdot)d\nu_{s-t}
\\
& =  \frac{1}{T}\int_{-t}^{T-t}d\sigma
\int_{\R^d}f(\sigma ,\cdot)d\nu_{\sigma}
\\
& =  \int_{(0,T)\times \R^{d}}fd\nu ,
\end{align*}
where the last equality follows from the periodicity of the
function  $\sigma\mapsto
\int_{\R^d}f(\sigma ,\cdot)d\nu_{\sigma}$.

\begin{proposition}
\label{Pr:uniqueness} Under Hypothesis  \ref{hyp1}, $P(t,s)$ has a
unique  $T$-periodic evolution system of measures, and $\T(t)$ has a
unique invariant measure.
\end{proposition}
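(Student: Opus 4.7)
I would split the proof into two uniqueness statements, both reducible to an application of Doob's theorem to the single Markov operator $P(s+T,s)$. The first statement is the substantive one; the second follows from it by a disintegration argument.

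\textbf{Step 1 (uniqueness of the $T$-periodic evolution system).} Given any $T$-periodic evolution system $\{\mu_s\}_{s\in\R}$ for $P(t,s)$, I would plug $t=s+T$ into the defining identity and use $\mu_{s+T}=\mu_s$ to obtain
\[
\int_{\R^d} P(s+T,s)\varphi\, d\mu_s=\int_{\R^d}\varphi\, d\mu_s,\qquad \varphi\in C_b(\R^d),
\]
so $\mu_s$ is invariant under the Markov operator $P(s+T,s)$. By Theorem~\ref{Th:P(t,s)}(iii)--(iv) this operator is strong Feller and irreducible, and Doob's theorem then guarantees that its invariant probability measure is unique. Consequently $\mu_s$ is uniquely determined for every $s\in\R$.

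\textbf{Step 2 (uniqueness of the invariant measure of $\T(t)$).} For an arbitrary invariant Borel probability $\nu$ of $\T(t)$ on $\U\times\R^d$, I would first note that $\T(t)$ acts on functions depending only on $s$ as the shift $f(\cdot)\mapsto f(\cdot-t)$, so the $\U$-marginal of $\nu$ must be shift-invariant and hence equal to $\frac{1}{T}\,ds$. A disintegration then yields $\nu(ds,dx)=\frac{1}{T}\nu_s(dx)\,ds$. Testing the invariance identity against tensor products $f(s,x)=g(s)\varphi(x)$, changing the variable $\sigma=s-t$ and exploiting $T$-periodicity of the integrand, I would arrive at
\[
\int_0^T g(\sigma)\bigg(\int_{\R^d} P(\sigma+t,\sigma)\varphi\, d\nu_{\sigma+t}-\int_{\R^d}\varphi\, d\nu_\sigma\bigg)d\sigma=0,
\]
for every continuous $T$-periodic $g$, $\varphi\in C_b(\R^d)$ and $t>0$. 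A density argument (over countable dense subsets of these parameters) upgrades this to the evolution-system identity for every $\sigma$ outside a single Lebesgue-null set $N$. Redefining $\nu_\sigma$ on $N$ by $\int\varphi\, d\tilde\nu_\sigma:=\int P(r,\sigma)\varphi\, d\nu_r$ for any $r>\sigma$ with $r\notin N$ (which is independent of such $r$ by the a.e. identity and by a short consistency check based on $\int P(r_2,r_1)\psi\,d\nu_{r_2}=\int\psi\,d\nu_{r_1}$) yields a $T$-periodic evolution system $\{\tilde\nu_s\}$ with $\nu=\frac{1}{T}\tilde\nu_s(dx)\,ds$. Step~1 then forces $\{\tilde\nu_s\}$, and hence $\nu$, to be unique.

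\textbf{Main obstacle.} The real content lies in Step~1: one must spot that $T$-periodicity collapses the evolution-system condition into plain invariance under the single operator $P(s+T,s)$, which then falls squarely within Doob's theorem. Step~2 is mostly bookkeeping, with the only technical subtlety being the upgrade from an a.e.-in-$\sigma$ identity to an evolution system defined for every $s$; this detour through evolution systems is necessary because, as remarked in the introduction, $\T(t)$ is neither strong Feller nor irreducible on $\U\times\R^d$, so Doob cannot be applied to it directly.
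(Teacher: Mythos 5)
Your argument is correct, and Step 1 takes a genuinely different route from the paper. The paper proves uniqueness of the $T$-periodic evolution system by showing (via the arguments of \cite{DD}) that the measure $\nu=\frac{1}{T}\nu_s(dx)ds$ associated to any such system is \emph{ergodic} for $\T(t)$, so that two such measures are either singular or equal, and then ruling out singularity because each $\nu_t$ is equivalent to the Lebesgue measure by \cite{KLL}. You instead observe that $T$-periodicity turns the evolution-system identity at $t=s+T$ into plain invariance of $\mu_s$ under the period map $P(s+T,s)$, which is strong Feller and irreducible by Theorem \ref{Th:P(t,s)}(iii)--(iv) (indeed its transition kernels have positive densities, so the kernel is already regular), and you conclude by the discrete-time Doob--Khasminskii uniqueness theorem. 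Both arguments ultimately rest on the same ingredient --- positivity of the transition densities --- but yours is more self-contained (no appeal to ergodicity of $\nu$ for the evolution semigroup) and is the classical ``reduce to the Poincar\'e map'' strategy for periodic problems; the paper's version has the side benefit of exhibiting $\mu$ as an ergodic measure of $\T(t)$. Your Step 2 coincides with the second half of the paper's proof (disintegration of $\nu$ as in \cite{DPL}, then testing invariance against products $g(s)\varphi(x)$); you are in fact more careful than the paper about upgrading the resulting identity, which a priori holds only for a.e.\ $s$ at fixed $t$ and $\varphi$, to a genuine evolution system --- your redefinition of $\nu_\sigma$ on the exceptional null set via $\int\varphi\,d\tilde\nu_\sigma=\int P(r,\sigma)\varphi\,d\nu_r$ is a legitimate way to close that gap, provided the well-definedness check is run over a countable measure-determining family of $\varphi$'s (sup-norm density in $C_b(\R^d)$ is not available).
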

\begin{proof}
Let $\{ \nu_s: s\in \R\}$ be any $T$-periodic evolution system of
measures for $P(t, s)$. The arguments in \cite[Thm.~4.3]{DD} show
that $\nu = \frac{1}{T} \, \nu_s(dx)ds$ is ergodic for $\T(t)$,  in
the sense that, if $\Gamma $ is a Borel set in $\U\times \R^d$ such
that $\T(t)\one_{\Gamma} = \one_{\Gamma} $ for every $t$, then
either $\nu(\Gamma) =0$ or $\nu(\Gamma) = 1$. This implies that the
invariant measures $\mu$ and $\nu$ corresponding to two different
evolution systems of measures $\{ \mu_s: s\in \R\}$ and $\{ \nu_s:
s\in \R\}$, are either singular or coincide, see e.g.,
\cite[Prop.~3.2.5]{DPZ}.  But we know from \cite[Prop.~5.2]{KLL}
that $\mu_t$ and $\nu_t$ are equivalent to the Lebesgue measure in
$\R^d$ for every $t\in \R$, hence $\mu$ and $\nu$ are equivalent to
the Lebesgue measure in $\U\times \R^d$ so that they cannot be
singular.
Therefore, $\nu = \mu$, which implies $\nu_s = \mu_s$ for a.e. $s\in
\R$. Since $s\mapsto \int_{\R^d} \varphi (x)\nu_s(dx)$ and $s\mapsto
\int_{\R^d} \varphi (x)\mu_s(dx)$ are continuous for each
$\varphi\in C_b(\R^d)$ by Corollary \ref{cor:continuity}, then
$\nu_s = \mu_s$ for every $t\in \R$.

Let us prove that each invariant measure $\nu$ for $\T(t)$ comes
from an evolution system of measures. Arguing as in the case of time
depending Ornstein-Uhlenbeck operators (\cite[Prop.~4.2]{DPL}) we
see that  $\nu$  is of the type $\nu(ds,dx) = \frac{1}{T} \nu_s(dx)
dt$, where  $\{\nu_s: s\in \R\}$ is a family of  $T$-periodic
probability  measures. Let $f(s,x) = g(s)\varphi(x)$, with $g\in
C(\U)$ and $\varphi\in C_b(\R^d)$. For $t>0$ the equality  $\int_{\U
\times \R^d} \T(t)f\,d\nu =  \int_{\U \times \R^d}  f\,d\nu$ means
\begin{align*}
\int_{0}^{T} g(s) \int_{\R^d} \varphi(x)\nu_s(dx) ds & =
\int_{0}^{T} g(s-t) \int_{\R^d} P(s,s-t) \varphi(x)\nu_s(dx) ds
\\
& = \int_{-t}^{T-t} g( s ) \int_{\R^d} P(s + t,s) \varphi(x)\nu_{s
+t}(dx) ds\\
& = \int_{0}^{T}  g( s ) \int_{\R^d} P(s + t,s)\varphi(x)\nu_{s
+t}(dx) ds.
\end{align*}
Since $g$ is arbitrary, then $ \int_{\R^d} P(s + t,s)
\varphi(x)\nu_{s  +t}(dx) = \int_{\R^d} \varphi(x)\nu_s(dx)$ for
every $t>0$, which means that $\{ \nu_s: s\in \R\}$ is an evolution
system of measures.
\end{proof}

From now on we shall consider the  invariant measure $\mu$ for
$\T(t)$  defined by
\begin{eqnarray*}
\mu (ds,dx) := \frac{1}{T} \, \mu_s(dx) ds,
\end{eqnarray*}
where $\{\mu_s: t\in \R\}$ is  the unique $T$-periodic evolution
system of measures for $P(t,s)$.

As a consequence of  (\cite[p.~ 2067]{BKR}), there exists a continuous positive function $\rho: \U \times \R^d \mapsto \R$ such that $\mu (ds,dx) = \rho(s,x)ds\, dx$. The computation at the end of the proof of Proposition \ref{Pr:uniqueness} shows that the family of measures $\nu_s(dx) := \rho (s,x)dx$ are a $T$-periodic evolution system of measures. By uniqueness, $\nu_s = \mu_s/T$ for every $s$, i.e. the density of $\mu_s$ is $T\rho(s, \cdot)$ for every $s\in \R$.

For any $p\in [1,\infty)$, we introduce the space
$L^p (\U \times \R^{d},\mu )$ of all functions $f$ such that
$f(s+T,x)=f(s,x)$ for  a.e. $(s,x)\in  \R^{1+d}$
and
\begin{eqnarray*}
\|f\|_{L^p(\U \times \R^d, \mu)}^{p}:=\int_{(0,T)\times\R^d}|f|^p d\mu <\infty.
\end{eqnarray*}
We also use the symbol $\int_{\U \times\R^d}|f|^p d\mu $ for $\int_{(0,T)\times\R^d}|f|^p d\mu $.
If no confusion may arise, we write $\|f\|_{p} $ for $\|f\|_{L^p(\U \times \R^d, \mu)} $.

As all Markov semigroups having an invariant measure,  $\T(t)$ can
be extended to a semigroup of positive contractions in $L^p (\U
\times \R^d ,\mu )$ for any $p\in [1,\infty)$. We still call $\T
(t)$ these extensions, using the notation  $\T_p(t)$ only when we
deal with different $L^p$ spaces.

It is easy to see that  $\T(t)$ is strongly continuous in $L^p (\U \times
\R^d ,\mu )$ for   $1\leq p <\infty$. Indeed,   we   already
know that  $\T(t)f$ tends to $f$ locally
uniformly as $t\to 0^+$,  for any $f\in C_b(\U\times\R^d)$.
Moreover, $\|\T(t)f\|_{\infty}\le \|f\|_{\infty}$ for any $t>0$.
By dominated convergence, $\T(t)f$ tends to $f$ in
$L^p(\U\times\R^d,\mu)$ as $t\to 0^+$. Since $C_b(\U\times\R^d)$ is dense in
$L^p(\U\times\R^d,\mu)$, $\T(t)f$ tends to
$f$ in $L^p(\U\times\R^d,\mu)$ as $t\to 0^+$, for every $f\in
L^p(\U\times\R^d,\mu)$.

We denote by $G_p$ the  infinitesimal generator of $\T(t)$ in $L^p(\U \times \R^d ,\mu )$. In general,
the characterization of the domain $D(G_p)$ of $G_p$ is not obvious, and even
determining whether a given smooth function $f$ belongs to   $D(G_p)$ is not obvious. In the case of time depending
Ornstein-Uhlenbeck operators,    $D(G_2)$ has been
characterized in \cite{GL1} as the space of all $f\in L^2(\U \times
\R^d ,\mu )$ such that there exist $D_sf$, $D_if$, $D_{ij}f\in
L^2(\U \times \R^d ,\mu )$ for $i, j=1, \ldots, d$. A similar
characterization for $2\neq p\in (1,\infty)$ follows adapting to the
periodic case the procedure of \cite{GLS}. We do not expect the same
result in the general  case, since for functions $f$ with all
derivatives in $L^p(\U \times \R^d ,\mu )$, $\A(s)f(s,\cdot)$ does
not necessarily belong to $L^p(\R^{d}, \mu_t)$, even for bounded
diffusion coefficients. Fortunately, the explicit knowledge of
$D(G_p)$ is not necessary in several circumstances, provided we know
a good core of $G_p$. In the paper \cite{LZ} sufficient conditions
have been given for $C^{\infty}_{c}(\U \times \R^d )$ be a core of
$G_p$, for $1\le p<\infty$. In general, $C^{\infty}_{c}(\U \times
\R^d )$ is contained in $D(G_p)$ but it is not a core. However, we
have the following result (\cite[Thm.~6.7]{LZ}).

\begin{proposition}
\label{prop-2.9} Under Hypothesis \ref{hyp1}, $\T(t)$ maps
$D(G_{\infty})$ into itself, and $D(G_{\infty})$ is a core of $G_p$
for every $p\in [1,\infty)$ and every $t>0$.
\end{proposition}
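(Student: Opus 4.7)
The plan rests on a single auxiliary observation that powers both assertions: \emph{if $(f_n)\subset C_b(\U\times\R^d)$ is uniformly bounded and converges to $f$ locally uniformly, then $\T(t)f_n\to \T(t)f$ locally uniformly for every $t\geq 0$.} I would prove this starting from the representation $\T(t)f_n(s,x)=\int_{\R^d}f_n(s-t,y)\,p_{s,s-t,x}(dy)$ provided by Theorem~\ref{Th:P(t,s)}(iii), splitting the integral across $B(0,R)$ and its complement. The crucial input is that the family $\{p_{s,s-t,x}:(s,x)\in K\}$ is \emph{uniformly} tight on every compact $K\subset \U\times\R^d$: Hypothesis~\ref{hyp1}(iii) combined with \eqref{P(t,s)V} yields $\int_{\R^d}V\,dp_{s,s-t,x}\leq V(x)+a/c$, which is bounded uniformly in $(s,x)\in K$ by continuity of $V$, and so a Markov-type inequality controls $p_{s,s-t,x}(\R^d\setminus B(0,R))$ uniformly in $(s,x)\in K$. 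I expect this step---the propagation of bounded local uniform convergence through the Markov kernel---to be the main technical hurdle.

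For the invariance $\T(t)D(G_\infty)\subset D(G_\infty)$, fix $u\in D(G_\infty)$ and $t>0$, and set $h_\varepsilon:=\varepsilon^{-1}(\T(\varepsilon)u-u)$. By the characterization of $D(G_\infty)$ recalled in the theorem preceding the proposition, $h_\varepsilon\to G_\infty u$ locally uniformly as $\varepsilon\to 0^+$ with $\sup_{0<\varepsilon\leq 1}\|h_\varepsilon\|_\infty<\infty$. The semigroup law gives $\varepsilon^{-1}(\T(\varepsilon)\T(t)u-\T(t)u)=\T(t)h_\varepsilon$, which by the auxiliary observation converges locally uniformly to $\T(t)G_\infty u\in C_b(\U\times\R^d)$, while contractivity of $\T(t)$ on $C_b$ preserves the uniform boundedness. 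The same characterization then forces $\T(t)u\in D(G_\infty)$, with the byproduct $G_\infty \T(t)u=\T(t)G_\infty u$.

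For the core property, I would first observe that $D(G_\infty)\subset D(G_p)$ with $G_p u=G_\infty u$ on $D(G_\infty)$: since $\mu$ has finite mass and $\|h_\varepsilon\|_\infty$ stays bounded, dominated convergence upgrades the locally uniform convergence $h_\varepsilon\to G_\infty u$ to convergence in $L^p(\U\times\R^d,\mu)$. By construction, $D(G_\infty)$ is the range of $u\mapsto\int_0^\infty e^{-\lambda t}\T(t)u\,dt$ on $C_b(\U\times\R^d)$, so $(\lambda-G_\infty)(D(G_\infty))=C_b(\U\times\R^d)$, and therefore $(\lambda-G_p)(D(G_\infty))=C_b(\U\times\R^d)$ is dense in $L^p(\U\times\R^d,\mu)$. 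Since $\T_p$ is a contraction semigroup, any $\lambda>0$ lies in $\rho(G_p)$ with $\|R(\lambda,G_p)\|\leq 1/\lambda$, so given $u\in D(G_p)$ I would choose $v_n\in D(G_\infty)$ with $(\lambda-G_\infty)v_n\to(\lambda-G_p)u$ in $L^p$, apply $R(\lambda,G_p)$ to obtain $v_n\to u$ in $L^p$, and then $G_pv_n=\lambda v_n-(\lambda-G_\infty)v_n\to G_pu$ in $L^p$. This graph-norm convergence is the standard resolvent-based core criterion, completing the proof.
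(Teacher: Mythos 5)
Your argument is correct, but note that the paper itself offers no proof of this proposition: it is quoted verbatim from \cite[Thm.~6.7]{LZ}, so there is no in-paper argument to compare against. Your proof is self-contained and all of its ingredients are genuinely available in the paper: the kernel representation and the estimate \eqref{P(t,s)V} (which, as remarked after Theorem \ref{Th:P(t,s)}, gives uniform tightness of $\{p_{t,s,x}: t>s,\ x\in B(0,r)\}$) justify your auxiliary continuity lemma, exactly as in the proof of \eqref{lim_unif_P(t,s)} in Corollary \ref{cor:continuity}(b); the characterization of $D(G_{\infty})$ via uniformly bounded, locally uniformly convergent difference quotients is the theorem quoted from \cite[Prop.~6.3]{LZ} immediately before the proposition, and it legitimates both directions of your invariance argument; and the identification $G_p=G_{\infty}$ on $D(G_{\infty})$ follows by dominated convergence (finiteness of $\mu$) or, equivalently, from the Laplace-transform representation of $R(\lambda,G_p)$ that the paper uses elsewhere. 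The final step, passing from surjectivity of $\lambda-G_{\infty}$ onto $C_b(\U\times\R^d)$ and density of $C_b(\U\times\R^d)$ in $L^p(\U\times\R^d,\mu)$ to the core property via $R(\lambda,G_p)$, is the standard resolvent-range criterion and is carried out correctly, including the observation that $v_n=R(\lambda,G_p)(\lambda-G_{\infty})v_n$ because $G_p$ and $G_{\infty}$ agree on $D(G_{\infty})$. I see no gap.
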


Adapting to our situation a similar  result for evolution semigroups
in fixed Banach spaces (e.g., \cite[Thm. 3.12]{CL}), we determine
another core of  $G_p$.

To this purpose, for any $\tau \in \R$, $\chi \in
C^{\infty}_{c}(\R^d)$ and $\alpha \in C^{1}_{c} (\R)$  with $\supp
\alpha \subset (a, a +T)$ for some  $a\geq \tau$, we define the
function $u_{\tau, \chi, \alpha}:\R^{1+d}\to\R$, as  the
$T$-periodic (with respect to $s$) extension of the function
$(s,x)\mapsto\alpha(s)P(s,\tau)\chi(x) $ defined in $[a, a +T)\times
\R^d$.

\begin{proposition}
\label{Pr:core}
For each $\tau$,  $\chi  $ and $\alpha$ as
above, the function $u_{\tau, \chi, \alpha}$ belongs to $D(G_p)$ and
the linear span $\mathcal C$ of the functions $u_{\tau, \chi,
\alpha}$ is a core for $G_p$, for each $p\in [1,\infty)$.
\end{proposition}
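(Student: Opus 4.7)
My plan is to verify the three hypotheses of the standard core criterion for strongly continuous semigroups: $\mathcal{C}\subset D(G_p)$, $\T(t)\mathcal{C}\subset\mathcal{C}$, and $\mathcal{C}$ is dense in $L^p(\U\times\R^d,\mu)$; any subspace satisfying these conditions is automatically a core for $G_p$. For $u=u_{\tau,\chi,\alpha}\in\mathcal{C}$, membership in $D(G_\infty)\subset D(G_p)$ follows from the description of $D(G_\infty)$ recalled before Proposition \ref{prop-2.9}: on $[a,a+T)$, $u$ coincides with $(s,x)\mapsto\alpha(s)P(s,\tau)\chi(x)$, which is bounded and lies in $W^{1,2}_{q,{\rm loc}}$ for every $q<\infty$ by Theorem \ref{Th:P(t,s)}(i); the condition $\supp\alpha\subset(a,a+T)$ ensures that $\alpha$ and $\alpha'$ vanish at the endpoints, so the $T$-periodic extension is $C_b$. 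Using $D_sP(s,\tau)\chi=\mathcal{A}(s)P(s,\tau)\chi$, valid because $P(\cdot,\tau)\chi$ is the classical solution of \eqref{CP}, a short computation gives
\begin{equation*}
\G u_{\tau,\chi,\alpha}(s,x)=-\alpha'(s)P(s,\tau)\chi(x)\in C_b(\U\times\R^d),
\end{equation*}
so $u\in D(G_\infty)$ with $G_pu_{\tau,\chi,\alpha}=-\alpha'\,P(\cdot,\tau)\chi$.

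For the invariance $\T(t)\mathcal{C}\subset\mathcal{C}$, fix $t\ge 0$. For $s\in[a+t,a+t+T)$ one has $s-t\in[a,a+T)$, and therefore
\begin{equation*}
\T(t)u_{\tau,\chi,\alpha}(s,x)=P(s,s-t)\bigl[\alpha(s-t)P(s-t,\tau)\chi\bigr](x)=\alpha(s-t)\,P(s,\tau)\chi(x).
\end{equation*}
Setting $\beta(r):=\alpha(r-t)$, we have $\supp\beta\subset(a+t,a+t+T)$ and $a+t\ge\tau$, so $\T(t)u_{\tau,\chi,\alpha}=u_{\tau,\chi,\beta}\in\mathcal{C}$.

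The main step is density of $\mathcal{C}$ in $L^p(\U\times\R^d,\mu)$. Tensor products $\phi\otimes\chi$ with $\phi\in C^1_c(\R)$ supported in an interval of length less than $T$ and $\chi\in C^{\infty}_{c}(\R^d)$, $T$-periodically extended in $s$, are dense there. Given $\eta>0$, I would decompose $\phi=\sum_{i=1}^N\phi_i$ via a smooth partition of unity with $\supp\phi_i$ of length $\le\eta$ and uniformly bounded overlap $K$, then pick $\tau_i$ slightly to the left of $\supp\phi_i$ so that $s-\tau_i\le 2\eta$ for $s\in\supp\phi_i$ and $\supp\phi_i\subset(\tau_i,\tau_i+T)$. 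The resulting difference
\begin{equation*}
(\phi\otimes\chi)(s,x)-\sum_{i=1}^N u_{\tau_i,\chi,\phi_i}(s,x)=\sum_{i=1}^N\phi_i(s)\bigl(\chi(x)-P(s,\tau_i)\chi(x)\bigr)
\end{equation*}
is pointwise dominated by $K\|\phi\|_\infty\max_{j:\,\phi_j(s)\ne 0}|\chi(x)-P(s,\tau_j)\chi(x)|\le 2K\|\phi\|_\infty\|\chi\|_\infty$ and tends pointwise to $0$ as $\eta\to 0^+$ by the joint continuity of $P(s,\tau)\chi(x)$ in Theorem \ref{Th:P(t,s)}(i); dominated convergence on the probability space $(\U\times\R^d,\mu)$ drives its $L^p$ norm to zero, giving density. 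Together with the standard fact that a $\T(t)$-invariant subspace of $D(G_p)$ dense in $L^p(\U\times\R^d,\mu)$ is a core for $G_p$ (in the same spirit as \cite[Thm.~3.12]{CL} for fixed Banach spaces), this concludes the proof. I expect the density step to be the main technical obstacle: a term-by-term $L^p$ estimate diverges as $N\to\infty$ for $p>1$, and the key device is to exploit the bounded overlap of the partition of unity to control the whole sum by a single pointwise maximum \emph{before} integrating.
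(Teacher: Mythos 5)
Your proof is correct and follows essentially the same route as the paper: membership of $u_{\tau,\chi,\alpha}$ in the domain, invariance of $\mathcal C$ under $\T(t)$, density of $\mathcal C$ in $L^p(\U\times\R^d,\mu)$ via a partition of unity in time, and the standard core criterion for invariant dense subspaces. The only cosmetic differences are that the paper verifies $u_{\tau,\chi,\alpha}\in D(G_p)$ by differentiating $t\mapsto\T(t)u$ at $t=0$ rather than through the characterization of $D(G_\infty)$, and in the density step it uses the \emph{uniform} convergence $\|P(s,\tau)\chi-\chi\|_\infty\to0$ as $s\to\tau^+$ (from \cite[Lemma 3.2]{KLL}), which, combined with $\sum_k\alpha_k\equiv 1$, makes your bounded-overlap/dominated-convergence device unnecessary.
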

\begin{proof}
Any function $u\in \mathcal C$ is in $C^{1,2}(  \R^{1+d} )$ by
(the proof of) \cite[Thm.~2.2]{KLL} and, since it is periodic in
time and bounded, it belongs to $L^p (\U \times \R^d ,\mu )$ for every $p\in [1,\infty)$.

Fix $\tau$, $\alpha$, $\chi$ as in the statement  and
define $u:= u_{\tau, \chi,\alpha}$.  For each $s\in [a , a +T)$, $x\in \R^d$ we have
\begin{eqnarray*}
\G u(s,x) = -\{\alpha '(s)P(s,\tau)\chi(x) + \alpha(s)\A(s)P(s,
\tau)\chi(x)\} + \alpha(s)\A(s)P(s, \tau)\chi(x),
\end{eqnarray*}
so that
\begin{eqnarray*}
\G u(s,x)  =  -\alpha'(s)P(s,\tau)\chi(x) , \quad s\in  [a , a
+T), \;x\in \R^d,
\end{eqnarray*}
and, for every $k\in \Z$,
\begin{eqnarray*}
\G u(s,x)  =  -\alpha '(s-kT)P(s-kT,\tau)\chi(x) , \quad s\in [a
+kT, a +(k+1)T), \;x\in \R^d.
\end{eqnarray*}
Let us prove that, for every $t>0$, $\T(t)u \in \mathcal C$. For
every $s\in \R$, let $k\in \Z$ be such that $s-t\in
 [a +kT, a+(k+1)T)$. Then $s-t-kT \geq \tau$, and for every  $x\in \R^d$ we have
\begin{align*}
(\T(t)u)(s,x) &=
\alpha (s-t-kT)P(s, s-t )P(s-t-kT,\tau )\chi(x) \\
&= \alpha (s-t-kT)P(s-kT,s-t-kT)P(s-t-kT,\tau ) \chi(x) \\
&= \alpha (s-t-kT)P(s-kT,\tau )   \chi(x),
\end{align*}
so that $\T(t)u $ is the $T$-periodic extension of the function
$(s,x)\mapsto \alpha(s-t)P(s,\tau)\chi(x)$ defined in  $[a +t, a+t
+T)\times\R^d$, which belongs to $ \mathcal C$. Moreover, $t\mapsto
\T(t)u $ is differentiable at $t=0$ with values in $L^p(\U \times
\R^d ,\mu )$ and we have
\begin{eqnarray*}
\left (\frac{d}{dt} \T(t)u\right )_{\big  |t=0} (s,x) = -
\alpha'(s)P(s, \tau) \chi(x) = \G u(s,x),\quad s\in [a, a +T),
\end{eqnarray*}
which shows that $u\in D(G_p)$ and $G_p u = \G u $.

Let us prove that $\mathcal C$ is dense in the domain of $G_p$.
Since $\T(t)$ maps $\mathcal C$ into itself for any $t>0$, it is
enough to prove that ${\mathcal C}$ is dense in $L^p (\U \times \R^d ,\mu )$.

We recall that the linear span of the functions $(s,x)\mapsto
\beta(s)\chi(x)$, with $\beta \in C^{1} (\U)$, $\chi \in
C^{\infty}_{c}(\R^d)$, is dense in  $L^p (\U \times \R^d ,\mu )$.
Therefore, it is enough to approximate any product $g = \beta \chi$ of
this type by elements of $\mathcal C$.

Fix $\beta \in C^{1} (\U)$, $\chi \in C^{\infty}_{c}(\R^d)$ and
$\eps >0$. Let $\tau \in \R$. Lemma 3.2 of \cite{KLL} implies that
$P(s,\tau)\chi$ tends to $\chi$, uniformly in $\R^d$, as $s\to
\tau^+$. Therefore, there exists $\tau'\in (\tau, \tau +T)$ such
that  $\|P(s,\tau)\chi -\chi\|_{\infty} \leq \eps$, for each $s\in
[\tau, \tau']$, which implies
\begin{eqnarray*}
\|P(s,\tau)\chi -\chi\|_{L^p(\R^d,\mu_s)} \leq \eps, \quad \tau\leq
s\leq \tau'.
\end{eqnarray*}
Let us cover $\mathbb T $ by a finite number of such
intervals ({\em mod} $T$)
$(\tau_k,\tau'_k)$, $k=1, \ldots, K$, and let
 $(\alpha_k)$ be an associated partition of   unity. Setting
\begin{eqnarray*}
u_k(s, x) =  \beta(s) \alpha_k(s)P(s, \tau_k)\chi(x), \quad s\in
[\tau_k, \tau_k +T)
\end{eqnarray*}
and still denoting by $u_k$ its $T$-periodic extension, the function $u$ defined by
\begin{eqnarray*}
u(s,x) :=  \sum_{k=1}^{K} u_k(s, x) , \quad s\in \R, \;x\in \R^d,
\end{eqnarray*}
belongs to $\mathcal C$, and we have
\begin{eqnarray*}
\|g(s, \cdot) - u(s, \cdot)\|_{L^p(\R^d,\mu_s)} \leq
\|\beta\|_{\infty} \sum_{k=1}^{K} \alpha_k(s) \|P(s,\tau_k)\chi
-\chi\|_{L^p(\R^d,\mu_s)} \leq \eps  \|\beta\|_{\infty},
\end{eqnarray*}
for any $s\in [0,T]$. Integrating with respect to  $s$ in $(0,T)$ we obtain
\begin{eqnarray*}
\|g-u\|_p \leq  \eps  \|\beta\|_{\infty},
\end{eqnarray*}
and the statement follows.
\end{proof}

\begin{corollary}
\label{restrizione} For $1<p<\infty$, $D(G_p)\subset W^{1,2}_{p,\rm
loc}(\U \times \R^{d},ds\times dx)$ and for every $r>0$ the restriction mapping
${\mathcal R}:D(G_p)\to W^{1,2}_{p }(\U \times B(0,r), ds\times
dx)$, defined by  ${\mathcal R}u= u_{|\U \times B(0,r)}$, is
continuous.
\end{corollary}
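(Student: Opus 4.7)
The plan is to combine the existence of a smooth core for $G_p$, provided by Proposition \ref{Pr:core}, with the classical interior $L^p$ estimates for uniformly parabolic equations. Let $u\in D(G_p)$, set $f:=G_p u$, and pick a sequence $(u_n)\subset\mathcal{C}$ with $u_n\to u$ and $G_p u_n\to f$ in $L^p(\U\times\R^d,\mu)$. Each $u_n$ is of class $C^{1,2}$ on $\R^{1+d}$ and $T$-periodic in time, and by the computation carried out inside the proof of Proposition \ref{Pr:core} it satisfies the pointwise identity $\A(s)u_n-D_s u_n=G_p u_n$.

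Fix $r>0$, pick $r'>r$, and set $Q_0:=(0,T)\times B(0,r)$ and $Q_1:=(-1,T+1)\times B(0,r')$, so that $\overline{Q_0}\subset Q_1$. On $\overline{Q_1}$ the coefficients $q_{ij}$, $b_i$ are bounded and lie in $C^{\alpha/2,\alpha}$ by Hypothesis \ref{hyp1}(i), the uniform ellipticity condition \eqref{ellipticity} holds, and each $u_n$ solves classically the backward parabolic equation $D_s u_n-\A(s)u_n=-G_p u_n$ on $Q_1$. The classical interior $L^p$ estimates for such parabolic operators then yield a constant $C=C(r,r',p)>0$, independent of $n$, such that
\begin{equation*}
\|u_n\|_{W^{1,2}_p(Q_0,\,ds\times dx)}\leq C\bigl(\|G_pu_n\|_{L^p(Q_1,\,ds\times dx)}+\|u_n\|_{L^p(Q_1,\,ds\times dx)}\bigr).
\end{equation*}

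Because the density $\rho$ of $\mu$ is continuous and strictly positive (as recalled right after Proposition \ref{Pr:uniqueness}), $T$-periodicity ensures that $\rho$ admits uniform positive lower and upper bounds on the compact set $\overline{\U\times B(0,r')}$, so the $L^p(Q_1,\mu)$ and $L^p(Q_1,ds\times dx)$ norms are equivalent on $T$-periodic functions. Therefore the right-hand side of the above estimate is controlled by a multiple of $\|u_n\|_p+\|G_pu_n\|_p$, hence by a constant depending only on $r,r',p$ times $\|u\|_p+\|f\|_p$ for $n$ large. The sequence $(u_n)$ is then bounded in the reflexive space $W^{1,2}_p(Q_0)$ (here the hypothesis $1<p<\infty$ is used), so up to a subsequence it converges weakly to some $v\in W^{1,2}_p(Q_0)$; since $u_n\to u$ in $L^p(Q_0,ds\times dx)$, we must have $v=u$ a.e., whence $u\in W^{1,2}_p(Q_0)$. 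Taking the $\liminf$ in the previous estimate and letting $r$ vary gives both the inclusion $D(G_p)\subset W^{1,2}_{p,\loc}(\U\times\R^d,ds\times dx)$ and the continuity of the restriction $\mathcal{R}$ from the graph-normed $D(G_p)$ into $W^{1,2}_p(\U\times B(0,r),ds\times dx)$.

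The only delicate point is the invocation of the classical interior $L^p$ parabolic estimate on $Q_0$ with control by the $L^p$ norms on the larger cylinder $Q_1$; this is standard once one verifies that on $\overline{Q_1}$ the coefficients are bounded, H\"older continuous, and uniformly elliptic, all of which follow from Hypothesis \ref{hyp1} and the local boundedness implied by the $C^{\alpha/2,\alpha}_{\loc}$ regularity.
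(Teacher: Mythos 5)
Your proposal is correct and follows essentially the same route as the paper: approximate by the core $\mathcal{C}$ of Proposition \ref{Pr:core}, apply the classical interior $L^p$ parabolic estimates to the smooth approximants on nested cylinders, use the continuity and positivity of the density $\rho$ to pass between the Lebesgue and $\mu$ norms, and conclude by density of $\mathcal{C}$ in $D(G_p)$. Your extra care with the time direction (enlarging to $(-1,T+1)$) and the explicit weak-compactness identification of the limit are just fuller versions of the steps the paper leaves implicit.
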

\begin{proof} Every $u
\in \mathcal{C}$ belongs to $C^{1,2} (\U \times \R^{d} )$, and
$G_pu(s,x)  = \A(s)u(s,x)  - D_s u(s,x) $, so that by classical
regularity results for parabolic equations in $L^p$ spaces with
respect to the Lebesgue measure there exists $C_1=C_1(r)$ such that
\begin{eqnarray*}
\|u\|_{W^{1,2}_{p }(\U \times B(0,r), ds\times dx)} \leq C_1(\|u \|_{L^p(\U \times B(0,2r), ds\times dx)} + \|G_pu \|_{L^p(\U \times B(0,2r), ds\times dx)}).
\end{eqnarray*}
On the other hand, since $\mu (ds,dx) =\rho(s,x)ds\,dx$ for a
positive continuous function $\rho$, there exists $C_2=C_2(r)$ such
that $\|\cdot \|_{L^p(\U \times B(0,2r), ds\times dx)} \leq C_2
\|\cdot \|_{L^p(\U \times \R^d, \mu)}$. Then, ${\mathcal R}$  is
continuous from $\mathcal C$ (endowed with the $D(G_p)$-norm) to
$W^{1,2}_{p }(\U \times B(0,r), ds\times dx)$, and since $\mathcal
C$ is dense in $D(G_p)$ the statement follows.
\end{proof}

The estimates on the space derivatives  of $P(t,s)f$ yield a useful
embedding result for  $D(G_p)$. We denote by
$W^{0,1}_{p}(\U \times \R^{d}, \mu)$ the set of the functions $f\in
L^{p} (\U \times \R^{d}, \mu)$ having space derivatives $D_if$ in
$L^{p} (\U \times \R^{d}, \mu)$, for every $i=1, \ldots, d$. It is a
Banach space with the norm
\begin{eqnarray*}
\|f\|_{W^{0,1}_{p}(\U \times \R^{d}, \mu)} = \|f\|_{L^{p} (\U \times \R^{d}, \mu)} +\sum_{i=1}^{d}
\|D_if\|_{L^{p} (\U \times \R^{d}, \mu)}.
 \end{eqnarray*}
Similarly, we denote by $C^{0,1}_{b}(\U \times \R^{d} )$ the set of
the functions $f\in C_b (\U \times \R^{d} )$ having space
derivatives $D_if$ in $C_b (\U \times \R^{d})$, for every $i=1,
\ldots, d$. It is a Banach space with the norm
 \begin{eqnarray*}
 \|f\|_{C^{0,1}_{b}(\U \times \R^{d} )} = \|f\|_{\infty} + \sum_{i=1}^{d} \|D_if\|_{\infty}.
 \end{eqnarray*}

\begin{proposition}
\label{thm-2.7} Assume that Hypotheses  \ref{hyp1}  and  \ref{hyp2}
are satisfied. Let $C_2$  be the constant  given by Theorem
\ref{Th:gradP(t,s)infinito}.  Then,
for every  $t>0$ and   $f\in C_b(\U\times\R^d)$ we have
\begin{equation}
\| \,|\nabla_x\T(t)f| \, \|_{\infty } \leq C_2 \max\{t^{-1/2}, \; 1\}
\|f\|_{ \infty } .
 \label{grad-infty-semigruppone>1}
\end{equation}
Moreover,  $D(G_{\infty})$ is continuously embedded into $C^{0,1}_{b}(\U \times \R^{d} )$.

If for some $p>1$ the constant $\ell_p$ in \eqref{ell_p} is finite, let $C_3$ be the constant in estimate \eqref{grad-punt-1}. Then, for every $f\in L^{p} (\U \times \R^{d}, \mu)$,
\begin{equation}
\| \,|\nabla_x\T(t)f| \, \|_p  \leq  \left\{ \begin{array}{ll}
 C_3 e^{\ell_p}  t^{-1/2}\|f\|_p, & 0<t\leq 1,
 \\[2mm]
 C_3  \min\{ e^{\ell_pt}, e^{\ell_p} \} \|f\|_p, & t\geq 1,
\end{array}\right.
 \label{grad-p-semigruppone>1}
\end{equation}
and $D(G_p)$ is continuously embedded into $W^{0,1}_{p}(\U \times \R^{d}, \mu)$.
Moreover,
\begin{equation}
\| \,|\nabla_x\T(t)f| \, \|_p  \leq  e^{\ell_pt} \| \,|\nabla_x f| \, \|_p, \quad t>0, \; f\in W^{0,1}_{p}(\U \times \R^{d}, \mu).
\label{grad-grad}
\end{equation}
\end{proposition}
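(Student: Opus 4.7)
The plan is to transfer the gradient estimates for $P(t,s)$ from Theorems \ref{Th:gradP(t,s)infinito} and \ref{Th:gradP(t,s)puntuale} (together with \eqref{boh}, \eqref{grad-p}, \eqref{grad-p>1}) to the evolution semigroup $\T(t)$ via the defining identity $\T(t)f(s,x)=P(s,s-t)f(s-t,\cdot)(x)$, and then to derive the two embedding statements from these estimates through the resolvent representations of $D(G_\infty)$ and $D(G_p)$.

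First I would prove \eqref{grad-infty-semigruppone>1}. Since $\|f(s-t,\cdot)\|_\infty\le\|f\|_\infty$ for every $s,t$, applying the sup bound \eqref{star-luca} for $0<t\le 1$ and \eqref{boh} for $t\ge 1$ to the function $P(s,s-t)f(s-t,\cdot)$ yields $|\nabla_x\T(t)f(s,x)|\le C_2 t^{-1/2}\|f\|_\infty$ and $|\nabla_x\T(t)f(s,x)|\le C_2\|f\|_\infty$ respectively, which combine into the announced $\max\{t^{-1/2},1\}$-bound. For $D(G_\infty)\hookrightarrow C^{0,1}_b$ I would fix $\lambda>0$ and represent every $v\in D(G_\infty)$ as $v=\int_0^\infty e^{-\lambda t}\T(t)u\,dt$ with $u=\lambda v-G_\infty v$. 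Lemma \ref{le:derivateT(t)} makes $\nabla_x\T(t)u$ continuous, and \eqref{grad-infty-semigruppone>1} makes it integrable against $e^{-\lambda t}dt$; differentiation under the integral sign then produces $\|\,|\nabla_x v|\,\|_\infty\le K(\lambda)(\lambda\|v\|_\infty+\|G_\infty v\|_\infty)$.

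For \eqref{grad-p-semigruppone>1} I would take the $L^p(\R^d,\mu_s)$-norm in $x$ of $\nabla_x P(s,s-t)f(s-t,\cdot)$ using \eqref{grad-p} when $0<t\le 1$ and either \eqref{grad-p} or the bootstrapped \eqref{grad-p>1} when $t\ge 1$, obtaining the announced prefactors in front of $\|f(s-t,\cdot)\|_{L^p(\R^d,\mu_{s-t})}$. Raising to the $p$-th power and integrating in $s\in(0,T)$, the $T$-periodicity of both $f$ and $s\mapsto\mu_s$ allows the substitution $s\mapsto s-t$ and converts the integral into $T\|f\|_p^p$, yielding \eqref{grad-p-semigruppone>1}. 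The embedding $D(G_p)\hookrightarrow W^{0,1}_p$ then follows by the same resolvent scheme: \eqref{grad-p-semigruppone>1} makes $t\mapsto e^{-\lambda t}\|\,|\nabla_x\T(t)g|\,\|_p$ integrable on $(0,\infty)$ for every $\lambda>0$, so $R(\lambda,G_p):L^p(\U\times\R^d,\mu)\to W^{0,1}_p(\U\times\R^d,\mu)$ is bounded, and writing $f=R(\lambda,G_p)(\lambda f-G_p f)$ delivers the quantitative embedding.

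Finally, to prove \eqref{grad-grad} I would argue first on the core $\mathcal C$ of Proposition \ref{Pr:core}. Any $u\in\mathcal C$ is in $C^{1,2}(\R^{1+d})$, so the pointwise estimate \eqref{grad-punt} applied to $u(s-t,\cdot)$ yields $|\nabla_x\T(t)u(s,x)|^p\le e^{p\ell_p t}\T(t)|\nabla_x u|^p(s,x)$; integrating against $\mu$ and invoking its $\T(t)$-invariance proves \eqref{grad-grad} on $\mathcal C$. The step I expect to be the main obstacle is the extension to arbitrary $f\in W^{0,1}_p(\U\times\R^d,\mu)$, because $\mathcal C$ is a core of $G_p$ but is not manifestly dense in $W^{0,1}_p$. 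I would overcome this by regularizing $f$ through an $x$-mollifier combined with a cutoff in $|x|$, using the continuity of the density $\rho$ of $\mu$ to upgrade local Lebesgue $L^p$-convergence of gradients to convergence in $L^p(\mu)$, and then passing to the limit in the pointwise inequality \eqref{grad-punt}, which holds for the regularized data $f_n(s-t,\cdot)\in C^1_b$ at almost every $s$.
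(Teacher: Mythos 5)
Your proposal is correct and follows essentially the same route as the paper: transfer of the $P(t,s)$ gradient bounds to $\T(t)$ via $\T(t)f(s,x)=P(s,s-t)f(s-t,\cdot)(x)$ with the periodic change of variable, the resolvent representation for both embeddings, and the pointwise estimate \eqref{grad-punt} plus $\T(t)$-invariance of $\mu$ for \eqref{grad-grad}. The paper disposes of the final density issue in one line by invoking density of $C^{0,1}_b(\U\times\R^d)$ in $W^{0,1}_p(\U\times\R^d,\mu)$, which is exactly the cutoff-and-mollification argument you spell out.
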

\begin{proof}
Recalling that $\T(t)f(s,x) = P(s,s-t)f(s-t, \cdot)(x)$, estimate
\eqref{grad-infty-semigruppone>1} follows immediately from
\eqref{boh} and \eqref{star-luca}.

Let us   prove that  $D(G_{\infty})$ is continuously
embedded into $C^{0,1}_b(\R^{1+d})$.
  $D(G_{\infty})$ coincides with the range of the resolvent
$R(\lambda, G_{\infty})$, for any $\lambda > 0$. Since $R(\lambda,
G_{\infty})f(s,x) = \int_{0}^{\infty} e^{-\lambda t}\T(t)f
(s,x)\,dt$ for any $(s,x)\in\R^{1+d}$, estimate
\eqref{grad-infty-semigruppone>1} implies that the   derivatives
$D_iR(\lambda, G_{\infty})f $ are bounded by $C\|f\|_{\infty}$ for
some $C>0$. Their continuity follows from the continuity of the
space derivatives of $\T(t)f$ (Lemma \ref{le:derivateT(t)}) through
the dominated convergence theorem.

Assume now that $\ell_p<+\infty$ and let $f\in L^{p} (\U \times
\R^{d}, \mu)$. Using \eqref{grad-p} we obtain
\begin{align*}
\int_{\U \times \R^d} |\nabla_x\T(t)f |^p d\mu &=
\frac{1}{T} \int_{0}^{T} \int_{\R^d}|\nabla_x P(s,s-t)f(s-t,\cdot)(x)|^p  \mu_s(dx)\,ds \\
&\le   \frac{(C_3e^{\ell_p t})^p}{T} \max\{t^{-p/2},\, 1\}   \int_{0}^{T}  \|f(s-t, \cdot)\|_{L^p(\R^d, \mu_{s-t})}^{p}ds \\
&= (C_3e^{\ell_p t})^p  \max\{t^{-p/2},\, 1\}
\|f\|_{L^p(\U \times \R^d, \mu)}^{p}.
\end{align*}
If $\ell_p>0$,  for $t\geq 1$ we use \eqref{grad-p>1} instead of  \eqref{grad-p}, and we get
\begin{eqnarray*}
\|\,|\nabla_x \T(t)f|\,\|_p\leq C_3e^{\ell_p} \| f \|_p.
\end{eqnarray*}
In any case,  \eqref{grad-p-semigruppone>1} holds.
The embedding
$D(G_p)\subset W^{0,1}_{p}(\U \times \R^{d}, \mu)$ follows again
from the equality $R(\lambda, G_p)f
= \int_{0}^{\infty} e^{-\lambda t}\T(t)f\,dt$, for any $\lambda >
0$.

Estimate \eqref{grad-grad} follows from Theorem \ref{Th:gradP(t,s)puntuale}(i)  and from the density of $C^{0,1}_{b}(\U \times \R^{d}, \mu)$ in $W^{0,1}_{p}(\U \times \R^{d}, \mu)$.
\end{proof}

If some bounds on $Q$ and on $\langle b,x\rangle$ hold,  we can prove
important integration formulae in $D(G_{\infty})$. They yield the embeddings $D(G_p)\subset W^{0,1}_{p}(\U \times \R^{d}, \mu)$, even without the assumption $\ell_p <\infty$.

\begin{proposition}
\label{lemma1 parte1}
Assume that Hypotheses  \ref{hyp1}  and  \ref{hyp2} are satisfied.  Then:
\begin{enumerate}[\rm (a)]
\item
If  there exists $C>0$ such that
\begin{equation}
\hbox{\hskip .6cm}\| Q(s,x)\|_{{\mathcal L}(\R^d)} \leq C(|x|+1) V(x), \;\; \langle b(s,x), x\rangle \leq C(|x|^2+1)V(x), \quad (s,x)\in\R^{1+d},
\label{stimapiu'}
\end{equation}
then for every $p \in (1,\infty)$ and   $u \in D(G_{\infty})$,  $  |u|^{p-2} \langle Q \nabla_x u, \nabla_x u
\rangle\chi_{\{u\neq 0\}}$
belongs to $L^1 (\U \times \R^d,\mu )$, and
\begin{equation}
 \label{integration<Hu}
 \int_{\U \times \R^d }  |u|^{p-2} \langle Q \nabla_x u, \nabla_x u
\rangle\chi_{\{u\neq 0\}} \,d \mu \leq  - \frac{1}{p-1} \int_{\U \times \R^d } u |u|^{p-2}G_p u \, d \mu;
 \end{equation}
\item
if there exists $C>0$ such that
\begin{equation}
\hbox{\hskip .6cm}\| Q(s,x)\|_{{\mathcal L}(\R^d)} \leq C(|x|+1) V(x), \;\; |\langle b(s,x), x\rangle | \leq C(|x|^2+1) V(x), \quad (s,x)\in \R^{1+d},
\label{stimamodulo}
\end{equation}
then for  $p\geq 2$ and $u \in D(G_{\infty})$ we have
\begin{equation}
 \label{integration Hu}
 \int_{\U \times \R^d }  |u|^{p-2} \langle Q \nabla_x u, \nabla_x u
\rangle\chi_{\{u\neq 0\}} \,d \mu = - \frac{1}{p-1} \int_{\U \times \R^d } u |u|^{p-2}G_p u \, d \mu.
 \end{equation}
\item
If the diffusion coefficients $q_{ij}$ are bounded, then for every $p\in (1, \infty)$
and $u \in D(G_{\infty})$, \eqref{integration Hu} holds.
\end{enumerate}
\end{proposition}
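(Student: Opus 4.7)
\emph{Proof plan.} The strategy is to combine the $\T(t)$-invariance of $\mu$, which implies $\int_{\U\times\R^d}\G v\,d\mu=0$ for every $v\in D(G_\infty)$ (by differentiating $\int\T(t)v\,d\mu=\int v\,d\mu$ at $t=0$), with the pointwise chain rule applied to a smooth regularization of $|u|^p/p$. By Proposition~\ref{thm-2.7}, $u\in C^{0,1}_b(\U\times\R^d)$ with $\G u=G_\infty u$ bounded, and $u\in\bigcap_{q<\infty}W^{1,2}_{q,\loc}$, so both $u$ and $\nabla_x u$ are bounded. Fix $\eps>0$, set $\varphi_\eps(r)=(r^2+\eps^2)^{p/2}/p$, and take a radial non-increasing cutoff $\eta_R(x)=\eta(|x|^2/R^2)$ with $\eta\equiv 1$ on $[0,1]$ and $\eta\equiv 0$ on $[4,\infty)$. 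The chain rule and the Leibniz identity $\G(fg)=f\G g+g\G f+2\<Q\nabla_xf,\nabla_xg\>$ yield
\begin{equation*}
\G(\varphi_\eps(u)\eta_R)=\eta_R\bigl[\varphi_\eps'(u)\G u+\varphi_\eps''(u)\<Q\nabla_xu,\nabla_xu\>\bigr]+2\varphi_\eps'(u)\<Q\nabla_xu,\nabla\eta_R\>+\varphi_\eps(u)\A(s)\eta_R.
\end{equation*}
Since $\eta_R$ has compact $x$-support, the right-hand side is bounded and continuous, so $\varphi_\eps(u)\eta_R\in D(G_\infty)$ and integrating against $\mu$ gives a cutoff identity whose total integral is zero.

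I then pass $R\to\infty$. Estimate \eqref{m_tV} gives $V\in L^1(\U\times\R^d,\mu)$, hence $\int_{\{|x|\ge R\}}V\,d\mu\to 0$. Using $|\nabla\eta_R|\le CR^{-1}$ and $|D^2\eta_R|\le CR^{-2}$ on $\{R\le|x|\le 2R\}$, the assumption $\|Q\|\le C(|x|+1)V$ dominates $|\<Q\nabla_xu,\nabla\eta_R\>|+|\tr(QD^2\eta_R)|$ by $CV\chi_{\{|x|\ge R\}}$, whose $\mu$-integral vanishes. For the drift term $\<b,\nabla\eta_R\>=2\eta'(|x|^2/R^2)\<b,x\>/R^2$: under the two-sided bound \eqref{stimamodulo} of (b), $|\<b,\nabla\eta_R\>|\le CV\chi_{\{|x|\ge R\}}$ vanishes absolutely, giving equality; under the one-sided bound \eqref{stimapiu'} of (a), $\eta'\le 0$ and $\varphi_\eps(u)\ge 0$ force only $\limsup_R\bigl(-\int\varphi_\eps(u)\<b,\nabla\eta_R\>\,d\mu\bigr)\le 0$, producing the inequality. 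In case (c), bounded $Q$ kills the $Q$-boundary terms trivially; to handle the drift without a bound on $b$, I switch to a Lyapunov cutoff $\eta_R=\phi(V/R)$ with $\phi$ non-increasing, exploit that the time-independent $\eta_R$ satisfies $\int\A(s)\eta_R\,d\mu=0$ by invariance, and combine this with $\A(s)V\le a-cV$ and $V\in L^1(\mu)$ to force $\int\varphi_\eps(u)\A(s)\eta_R\,d\mu\to 0$. Monotone convergence applied to the nonnegative term $\eta_R\varphi_\eps''(u)\<Q\nabla_xu,\nabla_xu\>$ (as $\eta_R\uparrow 1$) and dominated convergence on $\int\eta_R\varphi_\eps'(u)G_\infty u\,d\mu$ then yield, with inequality in (a) and equality in (b)--(c),
\begin{equation*}
\int_{\U\times\R^d}\varphi_\eps''(u)\<Q\nabla_xu,\nabla_xu\>\,d\mu\;[\le\text{ or }=]\;-\int_{\U\times\R^d}\varphi_\eps'(u)G_\infty u\,d\mu.
\end{equation*}

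Finally I let $\eps\to 0$. The uniform bound $|\varphi_\eps'(r)|\le|r|^{p-1}$ (which uses $(\eps^2+r^2)^{(p-2)/2}\le|r|^{p-2}$ for $p<2$, $r\ne 0$) and $u\in L^\infty$ permit dominated convergence for $\int\varphi_\eps'(u)G_\infty u\,d\mu\to\int u|u|^{p-2}G_pu\,d\mu$, where I use $G_\infty u=G_pu$ on $D(G_\infty)\subset D(G_p)$. Stampacchia's theorem ensures $\nabla_xu=0$ a.e.\ on $\{u=0\}$, so $\liminf_\eps\varphi_\eps''(u)\<Q\nabla_xu,\nabla_xu\>\ge(p-1)|u|^{p-2}\<Q\nabla_xu,\nabla_xu\>\chi_{\{u\ne 0\}}$ a.e., and Fatou's lemma upgrades the previous estimate to \eqref{integration<Hu}, completing (a). For (b) with $p\ge 2$ the bound $\varphi_\eps''(r)\le C_p(\eps^2+r^2)^{p/2-1}$ is uniform in $\eps$ on the range of $u$, and for (c) the boundedness of $Q$ together with $V\in L^1(\mu)$ controls the gradient term, so a reverse-Fatou/uniform-integrability argument pins the limit down and \eqref{integration Hu} follows. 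The main obstacle is case (c): absent any pointwise control on $b$, the naive $|x|$-cutoff cannot close the drift boundary term, and the Lyapunov cutoff $\eta_R=\phi(V/R)$ combined with the invariance identity applied directly to $\eta_R$ is the crucial extra ingredient.
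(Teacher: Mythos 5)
Your treatment of parts (a) and (b) is essentially the paper's own argument: a space cutoff, the vanishing of $\int_{\U\times\R^d}\G v\,d\mu$ for $v\in D(G_{\infty})$, the one-sided sign of the drift boundary term coming from $\eta'\le 0$ and the upper bound on $\langle b,x\rangle^{+}$, and a Fatou/dominated-convergence passage in the regularization parameter (the paper uses the regularization $u_\delta=(u^2+\delta)^{p/2}-\delta^{p/2}$ only for $1<p<2$ and works with $|u|^p$ directly for $p\ge 2$, but that is a cosmetic difference). The sign bookkeeping in the drift term and the use of $\nabla_x u=0$ a.e.\ on $\{u=0\}$ are both correct.

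Part (c), however, contains a genuine gap, and the obstacle you identify there is not actually an obstacle. Your Lyapunov cutoff $\eta_R=\phi(V/R)$ rests on the claim that $\int_{\U\times\R^d}\A(s)\eta_R\,d\mu=0$ ``by invariance''; but that identity is only available once you know $\eta_R\in D(G_1)$, and nothing in the hypotheses gives this: $\A(s)\eta_R=\phi'(V/R)\A(s)V/R+\phi''(V/R)\langle Q\nabla V,\nabla V\rangle/R^2$, and Hypothesis \ref{hyp1}(iii) provides only a one-sided bound on $\A(s)V$ and no control whatsoever on $\nabla V$, so $\A(s)\eta_R$ need not be bounded or even $\mu$-integrable. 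The commutator term $\varphi_\eps'(u)\langle Q\nabla_x u,\nabla\eta_R\rangle=\varphi_\eps'(u)\phi'(V/R)\langle Q\nabla_x u,\nabla V\rangle/R$ suffers from the same problem. Moreover, even granting $\int\A(s)\eta_R\,d\mu=0$, it does not follow that $\int\varphi_\eps(u)\A(s)\eta_R\,d\mu\to 0$.

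The correct (and much shorter) route for (c) is to observe that no cutoff is needed at all: by the chain rule, $\G(\varphi_\eps(u))=\varphi_\eps'(u)\,\G u+\varphi_\eps''(u)\langle Q\nabla_x u,\nabla_x u\rangle$, and the unbounded drift never appears in isolation --- it is absorbed into $\G u$, which is bounded because $u\in D(G_{\infty})$. When $Q$ is bounded the second term is bounded too, so $\G(\varphi_\eps(u))\in C_b(\U\times\R^d)$ and the characterization of $D(G_{\infty})$ gives $\varphi_\eps(u)\in D(G_{\infty})\subset D(G_1)$ directly; invariance of $\mu$ then yields the exact identity with no boundary terms, and the limit $\eps\to 0$ proceeds as in your parts (a)--(b). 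This is what the paper does.
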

\begin{proof}
{\em Step 1: $p\ge 2$.}
Let $u\in D(G_{\infty})$. Then,
\begin{eqnarray*}
\G (|u|^p) = pu|u|^{p-2}\G u + p(p-1)|u|^{p-2}  \langle Q \nabla_x u, \nabla_x u \rangle .
\end{eqnarray*}
Since   $u$, $D_iu$, $\G u$ are bounded, if the diffusion
coefficients are bounded $\G (|u|^p)$ is bounded too. Therefore,
$|u|^p \in D(G_{\infty})\subset D(G_1)$, so that  $ \int_{\U \times
\R^d}  \G (|u|^p)   d\mu =0$ and \eqref{integration Hu} holds.

If  \eqref{stimapiu'}  holds,  the mapping $(s,x)\mapsto \| Q(s,x)\|_{{\mathcal L}(\R^d)}$ is in $L^1(\mathbb T\times\R^d,\mu)$, so that $\G (|u|^p)$  belongs to $L^1(\U \times \R^d, \mu)$. This does not guarantee that $|u|^p \in D(G_1)$ in the case of unbounded diffusion coefficients. However, we shall show that $ \int_{\U \times \R^d}  \G (|u|^p)   d\mu \leq 0$, and that $ \int_{\U \times \R^d}  \G (|u|^p)   d\mu = 0$ if the stronger condition  \eqref{stimamodulo} holds.

Let $\eta\in C^{\infty}(\R)$ be a nonincreasing function such that
$\one_{[0,1]}\le \eta\le\one_{[0,2]}$. For $R\geq 1$ define the
functions $\theta_R(x):=\eta(|x|/R)$ for any $x\in \R^d$. Since $\G
(|u|^p)\in L^1(\U \times \R^d, \mu)$, then
\begin{equation}
 \label{limiteu^p}
 \int_{\U \times \R^d}  \G (|u|^p)   d\mu  =  \lim_{R\to \infty}  \int_{\U \times \R^d}\G(|u|^p)  \theta_R d\mu .
\end{equation}
Let us  estimate the integrals $ \int_{\U \times \R^d}  \G(|u|^p)  \theta_R d\mu $.
For every $R>0$, $|u|^p\theta_R $ belongs to
$\bigcap_{p<\infty}W^{1,2}_{\rm p,\rm loc}(\R^{1+d})\cap C_b(\mathbb T\times\R^d)$ and
\begin{eqnarray*}
\G(|u|^p\theta_R) =  |u|^p \G(\theta_R) +   \G(|u|^p) \theta_R +
2pu|u|^{p-2} \langle Q\nabla_x u,\nabla \theta_R\rangle
\end{eqnarray*}
belongs to $C_b(\U\times\R^d)$. Hence, $|u|^p\theta_R \in D(G_{\infty})$, so that the mean value of $\G(|u|^p\theta_R) $ vanishes. This means
\begin{equation}
 \label{approssimata}
  \int_{\U \times \R^d}  \G(|u|^p) \theta_R \,d\mu =
\int_{\U \times \R^d} (- |u|^p \G (\theta_R)  -
2pu|u|^{p-2}\langle Q\nabla_xu, \nabla \theta_R\rangle )d\mu .
 \end{equation}

Let us compute $\G(\theta_R)(s,x)  = ( \A (s)\theta_R)(s,x)$. We have $D_j\theta_R(0)= D_{ij}\theta_R(0) =0$ and
\begin{align*}
&D_j\theta_R(x)=\eta'\left (\frac{|x|}{R}\right )\frac{x_j}{|x|R},\\
&D_{ij}\theta_R(x)=\eta''\left (\frac{|x|}{R}\right
)\frac{x_ix_j}{|x|^2R^2}+\eta'\left (\frac{|x|}{R}\right )
\frac{\delta_{ij}}{|x|R}-\eta'\left (\frac{|x|}{R}\right
)\frac{x_ix_j}{|x|^3R},
\end{align*}
for any $x\in\R^d\setminus \{0\} $ and any $i,j=1,\ldots,d$.
Therefore,
\begin{align*}
(\A (s) \theta_R )(s,x)=& \; \eta''\left (\frac{|x|}{R}\right )\frac{\langle Q(s,x)x,x\rangle}{|x|^2R^2}+
\eta'\left (\frac{|x|}{R}\right )\frac{\tr (Q(s,x))}{|x|R}\\
&-\eta'\left (\frac{|x|}{R}\right )\frac{\langle Q(s,x)x,x\rangle}{|x|^3R}
+\eta'\left (\frac{|x|}{R}\right )\frac{\langle b(s,x),x\rangle}{|x|R}.
\end{align*}
Since $\eta'(r)$ and
$\eta''(r)$ vanish if $r\notin (1,2)$, there exists $C_1>0$ such  that
\begin{eqnarray*}
\left |\eta''\left (\frac{|x|}{R}\right )\frac{\langle
Q(s,x)x,x\rangle}{|x|^2R^2}+ \eta'\left (\frac{|x|}{R}\right
)\frac{\tr (Q(s,x))}{|x|R}-\eta'\left (\frac{|x|}{R}\right
)\frac{\langle Q(s,x)x,x\rangle}{|x|^3R}\right |\le
\frac{C_1V(x)}{R},
\end{eqnarray*}
for any $(s,x)\in\R^{1+d}$. Moreover, $\langle Q\nabla_xu, \nabla
\theta_R\rangle$ goes to $0$ pointwise as $R\to \infty$, and for
each $  s\in \R$ and $x\in \R^d$ we have
\begin{eqnarray*}
|  \langle Q(s,x)\nabla_x u(s,x), \nabla \theta_R(x)\rangle | \leq C(|x|+1)V(x) \|\,|\nabla_xu|\,\|_{\infty}\frac{1}{R}\bigg| \eta'\left(\frac{|x|}{R}\right) \bigg|
\leq 3C\|\eta '\|_{\infty} V(x).
\end{eqnarray*}
Thus, for $ (s,x)\in\U\times (\R^d\setminus\{0\})$ we have
\begin{equation}
- (|u |^p \G (\theta_R) + 2pu|u|^{p-2} \langle Q\nabla_xu, \nabla \theta_R\rangle)(s,x)   = f_R(s,x) - \eta'\left (\frac{|x|}{R}\right )\frac{\langle b(s,x),x\rangle}{|x|R},
\label{Atheta}
\end{equation}
where $\lim_{R\to \infty} \|f_R\|_{L^1(\U\times\R^d,\mu)} =0$. Let us split  $\langle b(s,x),x\rangle = \langle b(s,x),x\rangle^+ -
\langle b(s,x),x\rangle^-$. Then $ \eta'(|x|/R )\frac{\langle b(s,x),x\rangle^-}{|x|R} \leq 0$, while
$ \eta'(|x|/R ) \frac{\langle b(s,x),x\rangle^+}{|x|R}$ goes to $0$ pointwise as $R\to \infty$, and by \eqref{stimapiu'}
\begin{equation}
\bigg| \eta'\left (\frac{|x|}{R}\right )\frac{\langle b(s,x),x\rangle^+}{|x|R}\bigg|  \leq \bigg| \eta'\left (\frac{|x|}{R}\right )\bigg|  \frac{C(|x|^2+1)V(x)}{|x| R}
\leq 5C \|\eta '\|_{\infty} V(x),
\label{infinit}
\end{equation}
for any $(s,x)\in\R^{1+d}$, so that by dominated convergence
\begin{equation}
\lim_{R\to \infty}  \int_{\U \times \R^d}  \eta'\left (\frac{|x|}{R}\right ) \frac{\langle b(s,x),x\rangle^+}{|x|R} d\mu =0.
\label{infinit1}
\end{equation}
Formulae  \eqref{approssimata} and  \eqref{Atheta} yield, for every $R>0$,
\begin{equation}
\label{infinit2}
 \int_{\U \times \R^d}\G(|u|^p) \theta_Rd\mu \leq  \int_{\U \times \R^d}\bigg( f_R(s,x) -  \eta'\left (\frac{|x|}{R}\right ) \frac{\langle b(s,x),x\rangle^+}{|x|R} \bigg) d\mu,
\end{equation}
where the right-hand side goes to $0$ as  $R\to \infty$. Now, taking \eqref{limiteu^p} into account,  \eqref{integration<Hu} follows.
If in addition \eqref{stimamodulo} holds, estimate \eqref{infinit} and its consequence   \eqref{infinit1} hold
 with  $\langle b(s,x),x\rangle^+$ replaced by $\langle b(s,x),x\rangle$, so that    \eqref {infinit2} may be replaced by
\begin{eqnarray*}
\int_{\U \times \R^d}\G(|u|^p) \theta_Rd\mu  =   \int_{\U \times \R^d}\bigg( f_R(s,x) -  \eta'\left (\frac{|x|}{R}\right ) \frac{\langle b(s,x),x\rangle}{|x|R} \bigg) d\mu
\end{eqnarray*}
and letting $R\to \infty$,  \eqref{limiteu^p} implies \eqref{integration Hu}.

 \vspace{2mm}
 \noindent {\em Step 2: $1<p<2$.}
  Fix $u \in D(G_{\infty})$ and $\delta >0$. Then, the function $u_{\delta} :=
(u^2 + \delta)^{\frac{p}{2}} - \delta^{\frac{p}{2}}$ is bounded and
continuous in $\U \times \R^d $. A straightforward computation shows
that
\begin{eqnarray*}
\G u_{\delta} = p u (u^2 + \delta)^{\frac{p}{2}-1} \G u +
p(u^2+\delta)^{\frac{p}{2}-2}[(p-1)u^2+\delta] \langle Q \nabla_x u,
\nabla_x u  \rangle.
\end{eqnarray*}
If \eqref{stimapiu'} holds, then $\G u_{\delta}$ belongs to
$L^1(\U\times\R^d,\mu)$. Moreover,
\begin{eqnarray*}
\G(u_{\delta}\theta_R) =  u_{\delta}\G (\theta_R) +   \G(u_{\delta})
\theta_R + 2pu(u^2+\delta)^{\frac{p-2}{2}} \langle Q\nabla_x
u,\nabla \theta_R\rangle
\end{eqnarray*}
belongs to $C_b(\U\times\R^d)$, and
\begin{eqnarray*}
|u(u^2+\delta)^{\frac{p-2}{2}} \langle Q\nabla_x u,\nabla
\theta_R\rangle|\leq\frac{2C\|\eta'\|_{\infty}}{R}V
(\|u\|_{\infty}^2+\delta)^{\frac{p-1}{2}}\|\,|\nabla_xu|\, \|_{\infty}.
\end{eqnarray*}
The   arguments used in Step 1 show that $\int_{\U\times\R^d}\G u_{\delta} d\mu\le 0$,
i.e.
\begin{equation}
\int_{\U\times\R^d}(u^2+\delta)^{\frac{p}{2}-2}[(p-1)u^2+\delta]
\langle Q \nabla_x u, \nabla_x u  \rangle d\mu\le
-\int_{\U\times\R^d}u (u^2 + \delta)^{\frac{p}{2}-1} \G u\, d\mu.
\label{luca-p<2}
\end{equation}
If the diffusion coefficients are bounded, $\G u_{\delta}$ belongs to  $C_b(\U\times\R^d)$, hence $u_{\delta}\in D(G_{\infty})\subset D(G_1)$ and the inequality in \eqref{luca-p<2} can be replaced by an equality.

By dominated convergence, the right-hand side of \eqref{luca-p<2} converges, as $\delta \to 0$, to the corresponding
integral  with $\delta =0$. Indeed, for any $\delta >0$, the function
$|u| (u^2 + \delta)^{\frac{p}{2}-1}|G_pu|$ is bounded from above by
$|u|^{p-1}|G_p u|$ since $p < 2$, and the $\mu $-a.e. pointwise
convergence is obvious. On the other hand, the functions
\begin{eqnarray*}
(u^2 + \delta)^{\frac{p}{2}-2}
[(p-1)u^2 + \delta]\langle Q \nabla_x u, \nabla_x u \rangle
\end{eqnarray*}
converge pointwise
a.e. (with respect to the Lebesgue measure) in $\{u \neq 0\}$ to the function $ |u|^{p-2} \langle Q \nabla_x u, \nabla_x u\rangle\chi_{\{u\neq 0\}}$,
and $\nabla_x u =0$ a.e. (with respect to the Lebesgue measure) in the set
 $\{u = 0\}$. Since $\mu $ is absolutely continuous with respect
to the Lebesgue measure, it follows that $(u^2 +
\delta)^{\frac{p}{2}-2}[(p-1)u^2 + \delta] \langle Q \nabla_x u,
\nabla_x u \rangle$ converges to 0 $\mu $-a.e. in $\{u =0\}$. This
shows that $(u^2 + \delta)^{\frac{p}{2}-2}[(p-1)u^2 + \delta]
\langle Q \nabla_x u, \nabla_x u \rangle$ converges pointwise to $
|u|^{p-2} \langle Q \nabla_x u, \nabla_x u\rangle\chi_{\{u\neq 0\}}$
$\mu $-a.e. in $\U \times \R^d $. By the Fatou Lemma,
\begin{align*}
  \int_{{\{u\neq
0\}} } |u|^{p-2} \langle Q \nabla_x u, \nabla_x u \rangle
d \mu
\leq & \liminf_{\delta \to 0^+} \int_{\U \times \R^d } (u^2 +
\delta)^{\frac{p}{2}-2}
[(p-1)u^2 + \delta] \langle Q \nabla_x u, \nabla_x u \rangle d \mu\\
= & - \frac{1}{p-1} \lim_{\delta \to 0^+} \int_{\U \times \R^d } u
(u^2 + \delta)^{\frac{p}{2}-1}
G_p u\, d \mu\\
 = & - \frac{1}{p-1} \int_{\U \times \R^d } u |u|^{p-2}G_p  u \,d \mu ,
\end{align*}
and this implies that $ |u|^{p-2} \langle Q \nabla_x u, \nabla_x u\rangle\chi_{\{u\neq 0\}}$ belongs to $L^1(\U \times \R^d,\mu)$.
Now, since
\begin{eqnarray*}
(u^2 + \delta)^{\frac{p}{2}-2}[(p-1)u^2 + \delta] \langle Q \nabla_x u, \nabla_x u \rangle
\leq 2 (4-p)^{\frac{p-4}{2}}  |u|^{p-2} \langle Q \nabla_x u, \nabla_x u\rangle\chi_{\{u\neq 0\}},
\end{eqnarray*}
we can apply the dominated convergence theorem
to both sides of \eqref{luca-p<2} (which is an equality if the diffusion coefficients are bounded) and conclude that \eqref{integration<Hu}
 holds   if \eqref{stimapiu'} is satisfied, and  that \eqref{integration Hu}
 holds   if the diffusion coefficients are bounded.
 \end{proof}


\begin{corollary}
\label{cor:2}
Let Hypotheses  \ref{hyp1}  and  \ref{hyp2} hold.  Then:
\begin{enumerate}[\rm (a)]
\item
if the diffusion coefficients are bounded, or if \eqref{stimapiu'} holds, $D(G_p) \subset W^{0,1}_{p}(\U \times \R^d)$ for each $p\in (1,\infty)$, and the mapping $f\mapsto Q^{1/2}\nabla_xf$ is continuous from $D(G_p)$ into $(L^{ p}(\U \times \R^d,\mu))^d$ for $1<p\leq 2$;
\item
if the diffusion coefficients are bounded, or if \eqref{stimapiu'} holds, inequality \eqref{integration<Hu} holds for  every $p\in (1,\infty)$ and $u \in D(G_p)$;
\item
if the diffusion coefficients are bounded,  equality \eqref{integration Hu} holds for  every $p\geq 2$
and $u \in D(G_p)$.
\end{enumerate}
\end{corollary}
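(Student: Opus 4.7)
The strategy is to extend Proposition \ref{lemma1 parte1} from the core $D(G_\infty)$ to the full domain $D(G_p)$ by density and continuity. By Proposition \ref{prop-2.9}, for any $u \in D(G_p)$ I can choose a sequence $u_n \in D(G_\infty)$ with $u_n \to u$ and $G_p u_n \to G_p u$ in $L^p(\U \times \R^d, \mu)$, and each difference $u_n - u_m$ remains in $D(G_\infty)$.

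For part (a) in the range $1 < p \leq 2$, I apply Proposition \ref{lemma1 parte1}(a) to $u_n - u_m$ and exploit the pointwise identity $|Q^{1/2}\nabla v|^p = (|v|^{p-2}|Q^{1/2}\nabla v|^2)^{p/2}|v|^{p(2-p)/2}$ on $\{v\neq 0\}$ together with H\"older's inequality with conjugate exponents $2/p$ and $2/(2-p)$ (the case $p=2$ being immediate). This produces the unweighted bound
\begin{equation*}
\||Q^{1/2}\nabla(u_n-u_m)|\|_p \leq (p-1)^{-1/2}\bigl(\|u_n-u_m\|_p\|G_p(u_n-u_m)\|_p\bigr)^{1/2},
\end{equation*}
so $Q^{1/2}\nabla u_n$ is Cauchy in $L^p$. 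Since $Q\geq \eta_0 I$, $\nabla u_n$ is also Cauchy in $L^p$, and by distributional consistency its limit equals $\nabla u$; hence $u \in W^{0,1}_p(\U\times\R^d,\mu)$ and $f\mapsto Q^{1/2}\nabla_x f$ extends continuously. For $p>2$, when the diffusion coefficients are bounded, Hypotheses \ref{hyp1} and \ref{hyp2} imply that $\ell_p<\infty$ for every $p>1$, and Proposition \ref{thm-2.7} directly yields $D(G_p)\subset W^{0,1}_p$; under \eqref{stimapiu'} one argues similarly but with a more delicate extraction, using the weighted Cauchy property for $u_n-u_m$ coming from Proposition \ref{lemma1 parte1}(a) together with the local $W^{1,2}_p$ regularity of Corollary \ref{restrizione} and the $p=2$ case already established.

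For part (b), I pass to the limit in \eqref{integration<Hu} written for $u_n$. The right-hand side converges because $u_n|u_n|^{p-2}\to u|u|^{p-2}$ in $L^{p/(p-1)}$ (continuity of the Nemytskii operator $t\mapsto t|t|^{p-2}$) and $G_pu_n\to G_pu$ in $L^p$, so H\"older does the job. For the left-hand side I extract a subsequence along which $u_n\to u$ and $\nabla u_n\to \nabla u$ a.e., observe that $\nabla u=0$ a.e. on $\{u=0\}$ since $u\in W^{0,1}_p$, and apply Fatou's lemma. Part (c) follows by the same scheme applied to the equality of Proposition \ref{lemma1 parte1}(c): under bounded diffusion, Proposition \ref{thm-2.7} now gives $\nabla u_n\to \nabla u$ in $L^p$, so writing the integrand difference $|u_n|^{p-2}|Q^{1/2}\nabla u_n|^2-|u|^{p-2}|Q^{1/2}\nabla u|^2$ as a sum of two cross-terms and estimating by H\"older with exponents $p/(p-2)$ and $p/2$ yields $L^1$ convergence of the left-hand side, lifting inequality to equality.

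The main technical obstacle is the passage to the limit in the nonlinear, possibly singular integrand $|u|^{p-2}|Q^{1/2}\nabla u|^2\chi_{\{u\neq 0\}}$. For $1<p<2$ the weight $|u|^{p-2}$ blows up at zero, which is precisely why the H\"older-reversal trick producing an unweighted $L^p$ gradient estimate is essential. For $p>2$ under \eqref{stimapiu'} the difficulty is opposite: one lacks a direct $L^p$ gradient bound, so the inclusion $D(G_p)\subset W^{0,1}_p$ must be extracted indirectly, combining the weighted inequality with local regularity. Once the correct gradient space is secured, the limit in the integration-by-parts identity is handled by Fatou for the inequality (b), and by dominated convergence based on the careful splitting described above for the equality (c).
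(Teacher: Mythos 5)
Your overall strategy --- extend Proposition \ref{lemma1 parte1} from the core $D(G_{\infty})$ to $D(G_p)$ by density, using the H\"older-reversal trick to convert the weighted estimate \eqref{integration<Hu} into an unweighted bound on $\|\,|Q^{1/2}\nabla_x f|\,\|_p$ for $1<p\leq 2$, then Fatou for (b) and dominated convergence/H\"older splitting for (c) --- is exactly the paper's. Your computation for $1<p\leq 2$ reproduces the paper's chain of inequalities (applied to differences $u_n-u_m$ rather than to a single element of the dense core, which is equivalent since the map is linear), and your treatments of (b) and (c) coincide with the paper's, including the observation that $\nabla_x u=0$ a.e.\ on $\{u=0\}$ and the use of a pointwise a.e.\ convergent subsequence of gradients. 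Your remark that bounded diffusion coefficients force $\ell_p<\infty$ (choosing $\eta$ to be the smallest eigenvalue of $Q$, hence bounded) is correct and gives a legitimate shortcut for that case via Proposition \ref{thm-2.7}.

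The genuine gap is the case $p>2$ under \eqref{stimapiu'} with unbounded diffusion coefficients. There $\ell_p$ may be infinite, so Proposition \ref{thm-2.7} is unavailable, and the weighted inequality $\int |u|^{p-2}\langle Q\nabla_x u,\nabla_x u\rangle\chi_{\{u\neq 0\}}\,d\mu\leq -\frac{1}{p-1}\int u|u|^{p-2}G_pu\,d\mu$ cannot be reversed into an unweighted $L^p$ gradient bound: for $p>2$ the weight $|u|^{p-2}$ \emph{vanishes} where $u$ is small, so H\"older runs the wrong way. Your proposed repair --- ``weighted Cauchy property plus the local $W^{1,2}_p$ regularity of Corollary \ref{restrizione} plus the $p=2$ case'' --- does not close this: local regularity controls $\nabla_x u$ only on balls, the $p=2$ case gives only $\nabla_x u\in L^2(\mu)$, and neither yields $\int_{\U\times\R^d}|\nabla_x u|^p\,d\mu<\infty$ at infinity. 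The paper's missing ingredient is interpolation: the linear maps $f\mapsto D_iR(\lambda,G)f=D_i\int_0^{\infty}e^{-\lambda t}\T(t)f\,dt$ are bounded from $L^2(\U\times\R^d,\mu)$ to $L^2$ (by the $p=2$ case of (a)) and from $L^{\infty}$ to $L^{\infty}$ (since $D(G_{\infty})\hookrightarrow C^{0,1}_b$ by Proposition \ref{thm-2.7}), hence bounded on $L^p$ for all $2<p<\infty$ by Riesz--Thorin/Stein interpolation; since $D(G_p)$ is the range of $R(\lambda,G_p)$, the embedding $D(G_p)\subset W^{0,1}_p(\U\times\R^d,\mu)$ follows. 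This step must be supplied; once it is, your arguments for (b) with $p>2$ under \eqref{stimapiu'} go through as written.
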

\begin{proof}
(a).  Let $1<p\leq 2$ and $f\in D(G_{\infty})$. Using the H\"older
inequality,  \eqref{integration<Hu} and then the H\"older inequality
again,  we get
\begin{align*}
\left( \int_{\U\times\R^d}|Q^{1/2}\nabla_xf|^pd\mu
\right)^{\frac{2}{p}} &\leq\left (\int_{\U\times\R^d}|f|^pd\mu\right
)^{\frac{2}{p}-1 }
\int_{\U\times\R^d}|f|^{p-2}|Q^{1/2}\nabla_xf|^2\chi_{\{f\neq
0\}}d\mu
\\
&\leq \|f\|_{p}^{2-p}  \frac{1}{p-1}
\int_{\U\times\R^d}|f|^{p-1}|G_pf|d\mu
\\
&\leq  \frac{1}{p-1}  \|f\|_{p} \|G_pf\|_{p}.
\end{align*}
Therefore, $\| \,| Q^{1/2} \nabla_xf |\,\|_p \leq 1/(2\sqrt{p-1}) \|f\|_{D(G_p)}$. Since $D(G_{\infty})$ is dense in $D(G_p)$, the mapping  $f\mapsto Q^{1/2} \nabla_xf  $ is bounded from $D(G_p)$ to $(L^{ p}(\U \times \R^d, \mu))^d$ and, since $Q(s,x) \geq \eta_0 I$ for each $s$ and $x$, also the mapping $f\mapsto \nabla_xf $ is  bounded from $D(G_p)$ to $(L^{ p}(\U \times \R^d, \mu))^p$, that is $D(G_p) \subset W^{0,1}_{p}(\U \times \R^d, \mu)$.

If $p>2$, the embedding follows by interpolation  between $L^2$ and
$L^{\infty}$. Precisely, since for $i=1, \ldots, d$ and $\lambda >0$
the mappings $f \mapsto D_i \int_{0}^{\infty} e^{-\lambda t} \T(t)f
\,dt  $ are bounded in  $L^{ 2}(\U \times \R^d, \mu)$ by statement
(a), and in $L^{ \infty}(\U \times \R^d, \mu)$ by Theorem
\ref{thm-2.7}, they are bounded in $L^{ p}(\U \times \R^d, \mu)$ for
every $p\in (2, \infty)$. On the other hand, $\int_{0}^{\infty}
e^{-\lambda t} \T(t)f \,dt= R(\lambda, G_p)f$ for every $f\in L^{
p}(\U \times \R^d, \mu)$. Therefore, the range of  $R(\lambda,
G_p)$, which is the domain of $G_p$, is continuously embedded in
$W^{0,1}_{p}(\U \times \R^d, \mu)$.

\vspace{2mm} (b). Consider  the nonlinear functions on $D(G_p)$
defined by
\begin{eqnarray*}
H(u) = |u|^{p-2} \langle Q \nabla_x u, \nabla_x u
\rangle\chi_{\{u\neq 0\}}, \quad K(u) =   u |u|^{p-2}G_p u.
\end{eqnarray*}
It is easy to see that $K$ is continuous with values in $L^1(\U
\times \R^d,\mu)$. Concerning $H$, fix $u\in D(G_p)$ and let
$(u_n)\subset  D(G_{\infty})$ converge to $u$ in $D(G_p)$. By
statement (a), $(u_n)$ converges to $u$ in
$W^{0,1}_p(\U\times\R^d,\mu)$. We may assume (possibly replacing
$u_n$ by a suitable subsequence) that $u_n$, $D_iu_n$ converge,
respectively, to $u$, $D_iu$ pointwise $\mu$-a.e, $i=1, \ldots, d$,
so that $H(u_n)$ converges to $H(u)$ pointwise $\mu$-a.e in $\{u\neq 0\}$. For every
$n\in \N$, $u_n$ satisfies \eqref{integration<Hu} by Proposition
\ref{lemma1 parte1}. Letting $n\to \infty$ we get, by the Fatou
Lemma,
\begin{align*}
\int_{\U \times \R^d }  |u|^{p-2} \langle Q \nabla_x u, \nabla_x u
\rangle\chi_{\{u\neq 0\}} \,d \mu  \leq &   \liminf_{n\to  \infty}
\int_{\U \times \R^d }  |u_n|^{p-2} \langle Q \nabla_x u_n, \nabla_x
u_n \rangle\chi_{\{u_n\neq 0\}} \,d \mu
\\
\leq  &
\lim_{n\to \infty}  \int_{\U \times \R^d } u_n |u_n|^{p-2}G_p u_n \, d \mu
\\
= & \int_{\U \times \R^d } u |u|^{p-2}G_p u \, d \mu ,
\end{align*}
that is,  \eqref{integration<Hu} holds for every $u\in D(G_p)$.

\vspace{2mm}
(c). If the diffusion coefficients are bounded, using statement (a) and the H\"older inequality it is easy to see that the
function $H : D(G_p)\to L^1(\U \times \R^d,\mu)$ is
continuous for $p\geq 2$.  Since   $D(G_{\infty})$ is dense in $D(G_p)$ and
\eqref{integration Hu} holds  for $u\in D(G_{\infty})$ by
Proposition \ref{lemma1 parte1}, then it holds for $u\in D(G_p)$.
 \end{proof}

As in the case of  evolution semigroups in fixed Banach spaces, the
spectral mapping theorem holds for $\T(t)$.
The proof is the same of \cite[Prop.~2.1]{GL2} with obvious changes, and it is omitted.

\begin{theorem}
\label{Th:SMT}
Let $1\leq p<\infty$, and denote by $\T _p(t)$ the
realization of $\T(t)$ in $L^p(\U \times \R^d, \mu)$. Then we have
\begin{eqnarray*}
\sigma (\T _p(t)) \setminus \{0\} = e^{t\sigma (G_p)}, \quad t>0.
\end{eqnarray*}
\end{theorem}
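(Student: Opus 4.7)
The plan is to follow the template of \cite[Prop.~2.1]{GL2}, in which the same identity was established for nonautonomous Ornstein-Uhlenbeck operators with $p=2$; the ``obvious changes'' consist in replacing the explicit Mehler-type computations of that paper with the abstract properties of $\T(t)$ and $G_p$ recorded earlier in this section. The inclusion $e^{t\sigma(G_p)}\subseteq\sigma(\T_p(t))$ is the classical spectral inclusion theorem, valid for any $C_0$-semigroup, and requires no special structure. All the work lies in the reverse inclusion $\sigma(\T_p(t))\setminus\{0\}\subseteq e^{t\sigma(G_p)}$, and this is where the specific nature of $\T(t)$ as an evolution semigroup is essential.

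The key tool is the rotational symmetry of $\sigma(\T_p(t))$ around the origin. For every $k\in\Z$ the function $s\mapsto e^{2\pi iks/T}$ is $T$-periodic, so the multiplication operator
\[
(M_ku)(s,x):=e^{2\pi iks/T}u(s,x)
\]
is an isometric isomorphism of $L^p(\U\times\R^d,\mu)$. Since the exponential factor is independent of the integration variable in the representation $P(s,s-t)\varphi(x)=\int_{\R^d}\varphi(y)p_{s,s-t,x}(dy)$, a direct computation starting from $\T(t)u(s,x)=P(s,s-t)u(s-t,\cdot)(x)$ yields the intertwining identity
\[
\T(t)M_k=e^{-2\pi ikt/T}M_k\T(t),\qquad t\ge 0,
\]
and, after differentiating at $t=0$ on the core $\mathcal C$ of Proposition \ref{Pr:core}, the dual relation $M_k^{-1}G_pM_k=G_p-(2\pi ik/T)I$. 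Consequently, for every $k\in\Z$, $\sigma(\T_p(t))$ is invariant under the rotation $z\mapsto e^{-2\pi ikt/T}z$ and $\sigma(G_p)$ is invariant under the vertical shift $\lambda\mapsto\lambda-2\pi ik/T$.

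To close the argument I would argue by contrapositive: given $\mu=e^{t\lambda_0}\ne 0$ with $\lambda_0+2\pi in/t\in\rho(G_p)$ for every $n\in\Z$, construct a bounded inverse of $\mu I-\T_p(t)$ by combining the elementary semigroup identity
\[
(\lambda-G_p)\int_0^t e^{\lambda(t-s)}\T(s)u\,ds=(e^{\lambda t}-\T(t))u,\qquad u\in D(G_p),
\]
applied at $\lambda=\lambda_0$, with a periodization in the spectral variable across the countable family $\{R(\lambda_0+2\pi in/t,G_p):n\in\Z\}$ using the symmetries just established. The main obstacle is proving convergence of this periodization, i.e.\ uniform resolvent bounds along the vertical line $\{\lambda_0+2\pi in/t:n\in\Z\}$: in \cite{GL2} this was handled via the Gaussian structure of the Ornstein-Uhlenbeck resolvent, whereas in the present general setting one must deduce it abstractly from the rotational invariance of $\sigma(\T_p(t))$ combined with the density of $\mathcal C$ in $D(G_p)$. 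Once this resolvent estimate is in hand, $\mu\in\rho(\T_p(t))$ follows, contradicting $\mu\in\sigma(\T_p(t))$ and completing the proof.
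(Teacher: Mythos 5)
Your preliminary steps are sound: the inclusion $e^{t\sigma(G_p)}\subseteq\sigma(\T_p(t))$ is indeed the standard spectral inclusion, and the intertwining $\T(t)M_k=e^{-2\pi ikt/T}M_k\T(t)$, hence $M_k^{-1}G_pM_k=G_p-2\pi ik/T$, correctly yields the $2\pi i/T$-periodicity of $\sigma(G_p)$ and the discrete rotational invariance of $\sigma(\T_p(t))$. (The paper itself gives no proof; it refers to \cite[Prop.~2.1]{GL2}, i.e.\ to the Chicone--Latushkin spectral mapping theorem for evolution semigroups, adapted to the varying spaces $L^p(\R^d,\mu_s)$.) The difficulty is that the whole content of the theorem lies in the converse inclusion, and there your argument has two genuine gaps.

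First, the implication you ultimately invoke --- namely that $\lambda_0+2\pi in/t\in\rho(G_p)$ for every $n\in\Z$ together with $\sup_n\|R(\lambda_0+2\pi in/t,G_p)\|<\infty$ forces $e^{t\lambda_0}\in\rho(\T_p(t))$ --- is the Gearhart--Pr\"uss theorem, which is a Hilbert space result; for $C_0$-semigroups on general Banach spaces it fails, since one needs convergence (in a Ces\`aro sense) of the series $\sum_nR(\lambda_0+2\pi in/t,G_p)f$, not mere uniform boundedness. Hence for $p\neq2$ your scheme cannot close without using the evolution structure inside the construction of $(e^{t\lambda_0}-\T_p(t))^{-1}$ itself. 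Second, the uniform bounds do not follow from the symmetries you list: since $M_k$ is an isometry, $\|R(\cdot,G_p)\|$ is $2\pi i/T$-periodic, whereas the points to be controlled are spaced $2\pi/t$ apart; when $t/T$ is irrational these points are dense modulo the period, and your hypothesis does not place the whole line ${\rm Re}\,\lambda={\rm Re}\,\lambda_0$ in $\rho(G_p)$, so no periodicity or compactness argument produces the supremum. (In fact the uniform bound is a \emph{consequence} of the theorem, via $R(\lambda,G_p)=\int_0^te^{-\lambda s}\T_p(s)\,ds\,\bigl(I-e^{-\lambda t}\T_p(t)\bigr)^{-1}$, rather than a stepping stone to it.) The standard repair is different: $e^{-\lambda t}\T(t)$ is again an evolution semigroup, associated with the family $e^{-\lambda(t-s)}P(t,s)$, with generator $G_p-\lambda$; this reduces the theorem to the single equivalence $0\in\rho(G_p)\Leftrightarrow1\in\rho(\T_p(t))$, whose nontrivial half is proved by constructing $(I-\T_p(t))^{-1}$ directly from $G_p^{-1}$ (equivalently, from the exponential dichotomy of the rescaled family), which is precisely where the specific nature of $\T(t)$ enters. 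As written, your plan could at best succeed for $p=2$, and even there the decisive resolvent estimate is missing.
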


\section{Asymptotic behavior}
\setcounter{equation}{0}

In this section we prove some asymptotic behavior results for
$\T(t)$ that yield asymptotic behavior results for $P(t,s)$.

We introduce a projection $\Pi$ on functions depending only on time, defined by
\begin{eqnarray*}
\Pi f(s,x): = m_s f(s, \cdot) = \int_{\R^d} f(s,y)\mu_s(dy), \quad s\in \U, \;x\in \R^d.
\end{eqnarray*}
It is easy to see that $\|\Pi\|_{{\mathcal L}(C_b(\U \times \R^d))}
= 1$, and $\|\Pi\|_{{\mathcal L}(L^p(\U \times \R^d, \mu))} = 1$.
The ranges of $\Pi(C_b(\U \times \R^d))$ and of $\Pi (L^p(\U \times
\R^d, \mu))$ may be identified with $C(\U)$ and with $L^p(\U ,
\frac{ds}{T})$, respectively. $\T(t)$ leaves $C(\U)$ and   $L^p(\U ,
\frac{ds}{T})$  invariant, and the part of
$\T(t)$ in such spaces is just the translation semigroup $f\mapsto
f(\cdot -t)$. Although $\T(t)$  is not strongly continuous in
$C_b(\U \times \R^d)$, the part of $\T(t)$ in $C(\U)$ is strongly
continuous. The infinitesimal generators of the parts of $\T(t)$ in
$C(\U)$ and in $L^p(\U ,\frac{ds}{T})$ have domains (isomorphic to)
$C^1(\U)$ and $W^{1,p}(\U,\frac{ds}{T})$,
respectively, and coincide with $-D_s$.

\vspace{2mm}

In the next theorems  we relate the asymptotic behavior of $\T(t) $
to  the asymptotic behavior  of $P(t,s)$.

\begin{theorem}
\label{Pr:equivalenza}
Let Hypothesis \ref{hyp1} hold. For $1\leq p < \infty$, consider the following statements:
\begin{enumerate}[\rm (i)]
\item
for each $f\in  L^p(\U \times \R^d, \mu)$ we have
\begin{equation}
\label{CompAsT(t)}
\lim_{t\to \infty} \| \T(t)(f-\Pi f) \|_{L^p(\U \times \R^d, \mu)} =0;
\end{equation}
\item
for each $ \varphi  \in C_b(\R^d)$ we have
\begin{equation}
\label{compas_indietro} \exists/\forall t\in \R, \quad  \lim_{s\to -
\infty} \| P(t,s)\varphi - m_s\varphi\|_{L^p(\R^d, \mu_t)} =0;
\end{equation}
\item
for some/each  $s\in \R$   we have
\begin{equation}
\label{compas} \lim_{t\to \infty} \| P(t,s)\varphi -
m_s\varphi\|_{L^p(\R^d, \mu_t)} =0, \quad  \varphi  \in L^p(\R^d,
\mu_s);
\end{equation}
\item
for  each $ \varphi  \in C_b(\R^d)$   we have
\begin{equation}
\label{compas_indietroR} \exists /\forall t \in \R, \quad \lim_{s\to
- \infty} \| P(t,s)\varphi - m_s\varphi\|_{L^{\infty}(B(0,R))} =0,
\quad   R>0;
\end{equation}
\item
for some/each $s\in \R$   we have
\begin{equation}
\label{compasR} \lim_{t\to \infty} \| P(t,s)\varphi -
m_s\varphi\|_{L^{\infty}(B(0,R))} =0, \quad     \varphi  \in
C_b(\R^d), \;R>0.
\end{equation}
\end{enumerate}
For every $p\in [1, \infty)$, statements (i), (ii), (iii) are equivalent, and they are implied by statements (iv) and (v).  If in addition Hypothesis \ref{hyp2} holds, for every $p\in [1, \infty)$ statements (i) to (v) are equivalent.
 \end{theorem}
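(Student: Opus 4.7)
The plan is to pivot on a single scalar that encodes all five statements. For $\varphi\in L^p(\R^d,\mu_s)$ I set $g(t,s;\varphi):=\|P(t,s)\varphi-m_s\varphi\|_{L^p(\R^d,\mu_t)}$. By the $T$-periodicity $P(t+T,s+T)=P(t,s)$ and $\mu_{s+T}=\mu_s$, this depends on $(t,s)$ only through $r:=t-s$ and the phase $\tau:=s\bmod T$, so I write $g(t,s;\varphi)=\tilde g(r,\tau;\varphi)$. Two elementary facts follow: (a) $\tilde g(\cdot,\tau;\varphi)$ is non-increasing, because $P(t_2,t_1)$ fixes constants and contracts $L^p(\mu_{t_1})$ into $L^p(\mu_{t_2})$, applied to $P(t_1,s)\varphi-m_s\varphi$; and (b) the transfer relation $\tilde g(r,\tau;\varphi)=\tilde g(r-(\tau'-\tau),\tau';P(\tau',\tau)\varphi)$ for $\tau'>\tau$ and $r\ge\tau'-\tau$, resting on $m_{\tau'}[P(\tau',\tau)\varphi]=m_\tau\varphi$. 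Moreover, for $f(s,x)=\alpha(s)\varphi(x)$ with $\alpha\in C(\U)$ and $\varphi\in C_b(\R^d)$, a direct computation gives $\|\T(t)(f-\Pi f)\|^p_{L^p(\U\times\R^d,\mu)}=T^{-1}\int_0^T|\alpha(\tau)|^p\tilde g(t,\tau;\varphi)^p\,d\tau$.

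To establish (i)$\Leftrightarrow$(iii) together with the internal ``$\exists s$''$\Leftrightarrow$``$\forall s$'' in (iii), I would proceed in three steps. First, (iii) at one $s_0$ implies (iii) at every $s$: given $\tau_1\in[0,T)$ and $\psi\in L^p(\mu_{\tau_1})$, set $\sigma:=(s_0\bmod T)+T$ and $\eta:=P(\sigma,\tau_1)\psi\in L^p(\mu_\sigma)=L^p(\mu_{s_0\bmod T})$; then $P(t,\tau_1)\psi-m_{\tau_1}\psi=P(t,\sigma)\eta-m_\sigma\eta$ for $t\ge\sigma$, and (iii) at $s_0$ applied to $\eta$ yields the claim. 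Second, (iii) for every $s$ implies (i) by density of $\mathrm{span}\,\{\alpha\otimes\varphi\}$ in $L^p(\U\times\R^d,\mu)$, contractivity of $\T(t)$, the displayed formula, and dominated convergence using $\tilde g\le 2\|\varphi\|_\infty$. Third, (i) implies (iii) at some $s$: apply (i) to $f_\varphi(s,x):=\varphi(x)$ for $\varphi\in C_b(\R^d)$ to obtain $\int_0^T\tilde g(t,\tau;\varphi)^p\,d\tau\to 0$; by (a), the pointwise limit $L(\tau;\varphi):=\lim_{t\to\infty}\tilde g(t,\tau;\varphi)$ exists and must vanish for a.e.\ $\tau$. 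Picking a countable family $\{\varphi_n\}\subset C_c(\R^d)$ simultaneously dense in every $L^p(\mu_\tau)$ (available since each $\mu_\tau$ is absolutely continuous), an $\varepsilon/3$-closing argument using $\tilde g(t,\tau;\cdot)\le 2\|\cdot\|_{L^p(\mu_\tau)}$ extends the conclusion to all of $L^p(\mu_\tau)$ for any phase outside the null set $\bigcup_n N_{\varphi_n}$, giving (iii) at such a phase; chained with the first step, (iii) holds at every $s$.

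Next, (ii)$\Leftrightarrow$(iii) (together with ``$\exists t$''$\Leftrightarrow$``$\forall t$'' in (ii)) follows from $g(t_0,s;\varphi)=\tilde g(t_0-s,s\bmod T;\varphi)$: (ii) is uniform-in-phase convergence $\tilde g(r,\tau;\varphi)\to 0$, while (iii) is pointwise-in-phase. To lift pointwise to uniform I will verify that $\tau\mapsto\tilde g(r,\tau;\varphi)$ is continuous for $\varphi\in C_b(\R^d)$ using joint continuity of $(t,s,x)\mapsto P(t,s)\varphi(x)$ (Theorem \ref{Th:P(t,s)}(i)), continuity of $s\mapsto m_s\varphi$ (Corollary \ref{cor:continuity}(a)), and the second part of Corollary \ref{cor:continuity}(a) applied to a compactly supported truncation of $|P(s,s-r)\varphi-m_{s-r}\varphi|^p$, with tails controlled by uniform tightness of $\{\mu_s\}$. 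Dini's theorem on the compact quotient $\U$ then promotes monotone pointwise convergence to uniform, delivering (ii) from (iii). Conversely, (ii)$\Rightarrow$(iii) by fixing a phase $\tau$, taking $s=\tau-nT$ to get $\tilde g(t_0-\tau+nT,\tau;\varphi)\to 0$, and using monotonicity in $r$ to pass to arbitrary $r\to\infty$; density extends from $C_b$ to $L^p(\mu_s)$.

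For the $L^\infty$-statements, (iv)$\Rightarrow$(ii) follows for $\varphi\in C_b$ by splitting $\|P(t_0,s)\varphi-m_s\varphi\|_{L^p(\mu_{t_0})}^p$ over $B(0,R)$ and its complement; uniform tightness of $\{\mu_s\}$ (footnote to Theorem \ref{Th:P(t,s)}(v)) bounds the tail by $(2\|\varphi\|_\infty)^p\varepsilon$, while (iv) controls the bulk; density then covers $L^p(\mu_s)$. The argument for (v)$\Rightarrow$(iii) is identical. Under Hypothesis \ref{hyp2}, estimate \eqref{boh} provides $\|\nabla_x P(t,s)\varphi\|_\infty\le C_2\|\varphi\|_\infty$ for $t\ge s+1$, so $\{P(t,s)\varphi-m_s\varphi\}_{t\ge s+1}$ is uniformly Lipschitz in $x$. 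Since $\mu_t$ admits a continuous positive density (noted after Proposition \ref{Pr:uniqueness}), bounded below by a positive constant on each $[0,T]\times B(0,R)$, an elementary volume estimate shows that an $L$-Lipschitz function with $L^p(\mu_t)$-norm $\delta$ has $L^\infty(B(0,R))$-norm at most $C(R,L,p)\delta^{p/(p+d)}$, which gives (ii)$\Rightarrow$(iv) and (iii)$\Rightarrow$(v). The main obstacle is the third step in the second paragraph, (i)$\Rightarrow$(iii), where extracting phase-pointwise information from the integrated $L^p$-convergence requires the monotonicity-plus-countable-density-plus-transfer manoeuvre; the other implications are routine consequences of periodicity, contraction and uniform tightness.
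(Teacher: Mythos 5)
Your proof is correct, but it reaches the equivalences by a genuinely different route than the paper, and in several places a cleaner one. The paper's proof of (i)$\Rightarrow$(ii),(iii) extracts a subsequence along which the integrated convergence becomes pointwise a.e.\ in the phase variable, and then moves from the a.e.\ good set to the specific phase needed by invoking the uniform continuity of $s\mapsto P(0,s)\varphi$ for $\varphi\in C^{\infty}_{c}(\R^d)$ (resting on Theorem \ref{Th:P(t,s)}(ii)) together with Corollary \ref{cor:continuity}(a). You instead exploit the monotonicity of $r\mapsto\tilde g(r,\tau;\varphi)$, which makes the pointwise limit exist everywhere without subsequences, and you repair the exceptional null set of phases by your transfer relation (``(iii) at one $s_0$ implies (iii) at every $s$'', which is exactly the paper's Step 1) together with a countable family of test functions; this avoids the differentiability of $P(t,s)\varphi$ in $s$ entirely. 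Your passage from (iii) to (ii) via continuity of $\tau\mapsto\tilde g(r,\tau;\varphi)$ (from Corollary \ref{cor:continuity}(a)) and Dini's theorem on the compact $\U$ is also new relative to the paper, which never proves (iii)$\Rightarrow$(ii) directly but routes both through (i). Finally, for (ii),(iii)$\Rightarrow$(iv),(v) under Hypothesis \ref{hyp2} the paper uses a qualitative Arzel\`a--Ascoli argument plus positivity of the density $\rho$, whereas your quantitative interpolation bound $\|h\|_{L^{\infty}(B(0,R))}\le C\,\|h\|_{L^p(\R^d,\mu_t)}^{p/(p+d)}$ for uniformly Lipschitz $h$ yields the same conclusion with an explicit rate; both rest on \eqref{boh} and on $\inf_{\U\times B(0,R+1)}\rho>0$. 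The only loose end is the internal ``$\exists$''$\Leftrightarrow$``$\forall$'' equivalence inside statements (iv) and (v): your cycle of implications closes through (iv) and (v) only when Hypothesis \ref{hyp2} holds, whereas the paper establishes these internal equivalences unconditionally in its Step 1; the same transfer argument you use for (ii) and (iii), combined with Corollary \ref{cor:continuity}(b), covers this case as well.
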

\begin{proof}
The proof is split in several steps.

\vspace{2mm} \noindent {\em Step 1: $\exists /\forall$ parts of
statements (ii) to (v). } To begin with, let us consider statement
(ii). Let  $ \varphi  \in C_b(\R^d)$ and  $t_0\in \R$ be such that $
\lim_{s\to - \infty} $ $\| P(t_0,s)\varphi -
m_s\varphi\|_{L^p(\R^d,\mu_{t_0})}$ $=0$. Then, for $t>t_0$, we have
$P(t,s)\varphi - m_s\varphi = P(t,t_0)(P(t_0,s)\varphi - m_s\varphi
)$ so that $ \| P(t,s)\varphi - m_s\varphi\|_{L^p(\R^d, \mu_t)} \leq
\|P(t_0,s)\varphi - m_s\varphi\|_{L^p(\R^d, \mu_{t_0})}$, which goes
to $0$ as $s\to -\infty$. For $t<t_0$ fix $k\in \N$   such that
$t+kT\geq t_0 $. Then, $P(t,s)\varphi - m_s\varphi =
P(t+kT,s+kT)\varphi - m_{s+kT}\varphi $, and $\mu_{t} = \mu_{t+kT}$,
so that $ \| P(t,s)\varphi - m_s\varphi\|_{L^p(\R^d, \mu_t)}$ $ = $
$\|P(t+kT,s+kT)\varphi - m_{s+kT}\varphi \|_{L^p(\R^d, \mu_{t+kT})}$
vanishes  as $s\to -\infty$ by the first part of the proof.

The same arguments yield the $\exists/\forall$ part of statement
(iv). Indeed, let $ \varphi  \in C_b(\R^d)$ and  $t_0\in \R$ be such
that  $ \lim_{s\to - \infty} $ $\|P(t_0,s)\varphi -
m_s\varphi\|_{L^{\infty}(B(0,R))}$ $=0$ for each $R>0$. For $t>t_0$
we have $P(t,s)\varphi - m_s\varphi = P(t,t_0)\varphi_s$, where
$\varphi_s := P(t_0,s)\varphi -  m_s\varphi$ goes to $0$ locally
uniformly as $s\to -\infty$. Corollary \ref{cor:continuity}(b)
yields $\lim_{s\to -\infty}\sup_{t >t_0}
\|P(t,t_0)\varphi_s\|_{L^{\infty}(B(0,R))}$ $=0$ for each $R>0$, that
is \eqref{compas_indietroR} holds for $t>t_0$ (even uniformly with
respect to $t$). If $t<t_0$ it is sufficient to fix  $k\in \N$ such
that $t+kT\geq t_0 $ and to argue as above.

Concerning statement (iii), if   \eqref{compas} holds for $s=s_0$,
then it holds for each $s\in \R$. Indeed, for $s<s_0$ and $ \varphi
\in L^p(\R^d, \mu_s)$ we have
$P(t,s)\varphi=P(t,s_0)P(s_0,s)\varphi$ and
\begin{eqnarray*}
m_{s_0}P(s_0,s)\varphi = \int_{\mathbb R^d}P(s_0,s)\varphi \,
d\mu_{s_0}= \int_{\mathbb R^d}\varphi \,d\mu_s=m_s \varphi ,
\end{eqnarray*}
so that
\begin{eqnarray*}
\|P(t,s)\varphi-m_s\varphi\|_{L^p(\R^d, \mu_t)}=
\|P(t,s_0)\psi-m_{s_0}\psi\|_{L^p(\R^d, \mu_t)},
\end{eqnarray*}
with   $\psi=P(s_0,s)\varphi$. Since $\psi \in L^p(\R^d,
\mu_{s_0})$, the right-hand side
  vanishes as $t\to \infty$.

For $s>s_0$   fix $k\in\mathbb N$ such that
$s-kT<s_0$. Then,
\begin{eqnarray*}
\|P(t,s)\varphi-m_s\varphi\|_{L^p(\R^d, \mu_t)}=
\|P(t-kT,s-kT)\varphi-m_{s-kT}\varphi\|_{L^p(\R^d, \mu_{t-kT})} .
\end{eqnarray*}
Since $s-kT<s_0$, by the first part of the proof the right-hand side  vanishes as $t\to\infty$.

The same arguments show that if \eqref {compasR} holds for some $s_0$, then it holds for each $s\in \R$.

\vspace{2mm}

\noindent{\em Step 2:  (i)   implies (ii). } Let us fix $\varphi \in
C^{\infty}_{c}(\R^d)$. By Step 1, it is enough to show that
$\lim_{s\to\infty}\|P(0,-s)\varphi - m_{-s} \varphi\|_{L^p(\R^d,
\mu_0)} = 0$. To this aim we shall prove  that, for every sequence
$(t_n)\to   \infty$, there exists a subsequence $(s_n)$ such that
$\lim_{n\to  \infty} \|P(0,-s_n)\varphi - m_{-s_n}
\varphi\|_{L^p(\R^d, \mu_0)} = 0$.

Set $f(s, x) :=  \varphi(x)$ for any $(s,x)\in\R^{1+d}$.
Then, $f\in C_b(\U \times \R^d)$, and for every $t>0$,  $s\in \R$ and $ x\in \R^d$ we have
\begin{eqnarray*}
\T (t)(I-\Pi)f (s,x)   = P(s, s-t )\varphi(x) - m_{s-t }\varphi.
\end{eqnarray*}
Formula \eqref{CompAsT(t)} implies
%
%
\begin{eqnarray*}
\lim_{t\to \infty} \int_{-T}^{0} \| P(s, s-t )\varphi - m_{s-t
}\varphi\|_{L^p(\R^d, \mu_s)} ^{p} ds =0.
\end{eqnarray*}
Since $P(0, s-t )\varphi(x) - m_{s-t }\varphi = P(0,s)[  P(s, s-t
)\varphi - m_{s-t }\varphi]$ for $s\in [-T,0]$ and $t\geq 0$, and
\eqref{contraction} holds, then
\begin{equation}
\lim_{t\to \infty}   \int_{-T}^{0}   \| P(0, s-t )\varphi - m_{s-t
}\varphi\|_{L^p(\R^d, \mu_0)} ^{p} ds =0.
\label{limite}
\end{equation}
It follows that  for every sequence $(t_n)\to   \infty$ there exist a subsequence $(s_n)$ and a set $\Gamma \subset [-T,0]$, with negligible complement, such that
\begin{equation}
\label{limite_succ}\lim_{n\to \infty}  \|P(0, s -s_n)\varphi - m_{s-
s_n }\varphi \|_{L^p(\R^d, \mu_0)}  =0,\qquad\;\,s\in\Gamma.
\end{equation}
Our aim is to show that $0\in \Gamma$. This follows from the uniform
continuity in $(-\infty,0]$ of the $C_b(\R^d)$-valued function
$s\mapsto P(0, s )\varphi$ (see Theorem \ref{Th:P(t,s)}(ii)) and of the
real-valued function $s\mapsto m_{s }\varphi$ (see Corollary
\ref{cor:continuity}). Indeed, for each $s\in \Gamma$ we have
\begin{align*}
 \| P(0,  -s_n)\varphi - m_{ - s_n }\varphi \|_{L^p(\R^d, \mu_0)}  &  \leq  \| P(0,-s_n) \varphi - P(0, s -s_n)\varphi   \|_{L^p(\R^d, \mu_0)}
 \\
 & +  \|P(0, s -s_n)\varphi - m_{s- s_n }\varphi \|_{L^p(\R^d, \mu_0)}
 \\
 & + | m_{s- s_n }\varphi - m_{ - s_n }\varphi |,
\end{align*}
and $ \|P(0,-s_n) \varphi -P(0, s -s_n)\varphi   \|_{L^p(\R^d,
\mu_0)}  \leq  \|P(0,-s_n) \varphi - P(0, s -s_n)\varphi
\|_{\infty}$. Then, for every $\varepsilon >0$ there exists $\delta
>0$ such that $ \| P(0,-s_n) \varphi - P(0, s -s_n)\varphi
\|_{L^p(\R^d, \mu_0)} \leq \varepsilon$ and $ | m_{s- s_n }\varphi -
m_{ - s_n }\varphi | \leq \varepsilon$, for each $s\in (-\delta,0)$
and $n\in\N$. Fix $s\in \Gamma \cap (-\delta, 0)$. By
\eqref{limite_succ}, $\| P(0, s -s_n)\varphi - m_{s- s_n }\varphi
\|_{L^p(\R^d, \mu_0)} \leq \varepsilon$ for $n$ large enough, say
$n\geq n(s,\varepsilon)$. Summing up, $ \| P(0,  -s_n)\varphi - m_{
- s_n }\varphi \|_{L^p(\R^d, \mu_0)}\leq 3 \varepsilon$ for $n\geq
n(s,\varepsilon)$. Therefore, $0\in \Gamma$ and
 \eqref{compas_indietro} holds for $\varphi \in C^{\infty}_{c}(\R^d)$.

Let us now fix $\varphi \in C _{b}(\R^d)$, and let $(\varphi_n)$ be a bounded sequence of test functions that converges to $\varphi $
locally uniformly.  By Corollary \ref{cor:continuity}(b), $\lim_{n\to \infty} \sup_{s\in \R}  \|\varphi_n-\varphi\|_{L^p(\R^d,\mu_s)}=0$. Since
\begin{align}
\|P(t,s)\varphi-m_s\varphi\|_{L^p(\R^d,\mu_t)}  \le & \;
\|P(t,s)(\varphi-\varphi_n)\|_{L^p(\R^d,\mu_t)} +
\|P(t,s)\varphi_n-m_s\varphi_n\|_{L^p(\R^d,\mu_t)} \notag
\\
  +  &  \;
|m_s\varphi_n-m_s\varphi| \label{Step2}
\\
  \le & \; 2\sup_{s\in\R}\|\varphi-\varphi_n\|_{L^p(\R^d,\mu_s)} +
\|P(t,s)\varphi_n-m_s\varphi_n\|_{L^p(\R^d,\mu_t)}, \notag
\end{align}
for every $n\in \N$, then
$\|P(t,s)\varphi-m_s\varphi\|_{L^p(\R^d,\mu_t)}$ goes to $0$ as
$s\to -\infty$.

\vspace{2mm}
\noindent {\em Step 3:   (i) implies (iii). }
Let  $\varphi \in C_b(\R^d)$.
Changing variable  in \eqref{limite} we get
\begin{eqnarray*}
\lim_{t\to \infty}   \int_{0}^{T} \int_{\R^d} | P(s +t , s
)\varphi(x) - m_{s}\varphi |^p \mu_{s +t }(dx)\,ds =0,
\end{eqnarray*}
so that there exists  a sequence $(t_n)\to \infty$ such that, for almost every $s \in (0,T)$ and by periodicity for almost every $s\in \R$, we have
\begin{equation}
\label{sottosucc} \lim_{n\to \infty}   \int_{\R^d} | P(s+t _n,
s)\varphi(x) - m_{s}\varphi |^p  \mu_{s +t _n}(dx)  =0.
\end{equation}
Let $\Gamma '$ be the set of all $s\in \R$ such that \eqref{sottosucc} holds. For $ s\in \Gamma ' $ and  for  $t\in [t_n, t_{n+1})$ we have
\begin{eqnarray*}
P(s+t  , s)\varphi - m_{s}\varphi =  P(s+t,s+t_n) [P(s+t _n, s)\varphi - m_{s}\varphi],
\end{eqnarray*}
so that, from \eqref{contraction},
\begin{eqnarray*}
\|P(s+t  , s)\varphi - m_{s}\varphi \|_{L^p(\R^d,  \mu_{s +t  })} \leq
\|P(s+t _n, s)\varphi - m_{s}\varphi \|_{L^p(\R^d,  \mu_{s +t _n})}.
\end{eqnarray*}
Hence, $\lim_{t\to \infty} \| P(t,s)\varphi -
m_s\varphi\|_{L^p(\R^d, \mu_t)} =0$ for $s\in \Gamma '$. To prove
that the limit is zero for every $s\in \R$, we argue as at the end
of Step 2, replacing $ \varphi_n$ by $P(s,r_n)\varphi $, with
$r_n\in \Gamma '$, $r_n\uparrow s$ as $n\to \infty$. Indeed, by
Theorem \ref{Th:P(t,s)}(ii), $P(s,r_n)\varphi $ converges to $\varphi $ locally uniformly.
Estimates \eqref{Step2} imply the statement.

If $\varphi \in L^p(\R^d, \mu_s)$,   \eqref{compas} follows
approaching $\varphi $ by a sequence of functions in $C_b(\R^d)$ and
recalling that $P(t,s)$ and $m_s$ are contractions from $L^p(\R^d,
\mu_s)$ to $L^p(\R^d, \mu_t)$.  So, statement (iii) holds.

 \vspace{2mm}
\noindent {\em Step 4: if Hypothesis  \ref{hyp2} holds, (ii) and
(iii) imply (iv) and (v), respectively.} Let $\varphi \in
C_{b}(\R^d)$. Then, for every $t\in \R$, the functions  $
P(t,s)\varphi - m_s\varphi$ ($s\leq t-1$) are equibounded and
equicontinuous by estimate \eqref{boh}. By the Arzel\`a-Ascoli
Theorem, for every $R>0$ there exist a sequence $(s_n)\to - \infty$
and a function $g\in C_b(B(0,R))$ such that $\lim_{n\to \infty} \|
P(t,s_n)\varphi - m_{s_n}\varphi - g\|_{L^{\infty}(B(0,R))} =0$.

Let $\rho $ be the continuous positive version of the density of
$\mu$ with respect to the Lebesgue measure. Then, $\mu_s =
\rho(s,x)dx$ for any $s\in\R$, by the remark after the proof of
Proposition \ref{Pr:uniqueness}. For each $n\in \N$ we have
\begin{align*}
&\inf_{\R \times B(0,R)} \rho \int_{B(0,R)} |  P(t,s_n)\varphi(x) -
m_{s_n}\varphi  | dx
 \leq    \int_{B(0,R)} |  P(t,s_n)\varphi(x) - m_{s_n}\varphi  | \rho(t,x) dx  \\
&= \int_{B(0,R)} |  P(t,s_n)\varphi(x) - m_{s_n}\varphi  | d\mu_t\
 \leq \bigg (\int_{\R^d} |  P(t,s_n)\varphi(x) - m_{s_n}\varphi  |^p d\mu_t\bigg )^{\frac{1}{p}}.
\end{align*}
If (ii) holds, the last term vanishes as $n\to \infty$.  Therefore,  $\int_{B(0,R)} |  g(x) | dx =0$, so that $g\equiv 0$
and (v) holds.

The proof that (iii) implies (v) is the same.

\vspace{2mm} \noindent {\em Step 5: (ii),  (iii), (iv), (v) imply
(i).} Let $f(s,x) = \alpha (s)\varphi(x)$, with $\alpha \in C(\U)$
and $\varphi \in C_b(\R^d)$. Then $\T(t)(f-\Pi f)(s,x) = \alpha
(s-t) (P(s, s-t)\varphi (x) - m_{s-t}\varphi )$, so that
\begin{equation}
\label{uguaglianza} \| \T(t)(f-\Pi f)\|_{p}^{p} = \frac{1}{T}
\int_0^T |\alpha(s-t)|^p \int_{\R^d} |P(s, s-t)\varphi(x) -
m_{s-t}\varphi |^p \mu_{s}(dx)\, ds.
\end{equation}

If (ii) holds, then
  $\lim_{t\to \infty}  \int_{\R^d} |P(s, s-t)\varphi(x) - m_{s-t}\varphi |^p \mu_{s}(dx) =0$ for each $s\in (0,T)$. Moreover,
$ \int_{\R^d} |P(s, s-t)\varphi(x) - m_{s-t}\varphi |^p
\mu_{s}(dx)\leq (2\|\varphi\|_{\infty})^p$, for each $s\in (0,T)$.
If (iv) holds, $ |\alpha(s-t) (P(s, s-t)\varphi(x) - m_{s-t}\varphi
)|^p $ goes to zero pointwise, and it does not exceed
$(2\|\alpha\|_{\infty} \|\varphi\|_{\infty})^p$, for each $s\in
(0,T)$.  In both cases, by dominated convergence $\lim_{t\to \infty}
\| \T(t)(f-\Pi f)\|_{L^p (\U \times \R^d , \mu)} =0$.

If (iii) or (v)  holds, let us rewrite \eqref{uguaglianza} as
\begin{eqnarray*}
\| \T(t)(f-\Pi f)\|_{L^p (\U \times \R^d , \mu)}^{p} = \frac{1}{T}
\int_0^T |\alpha(s)|^p \int_{\R^d} |P(s +t, s)\varphi(x) -
m_{s}\varphi |^p \mu_{s+t}(dx)\, ds.
\end{eqnarray*}
Then,  $\lim_{t\to \infty}  \int_{\R^d} |P(s+t, s)\varphi(x) -
m_{s}\varphi |^p \mu_{s+t}(dx) =0$ for   each $s\in (0,T)$ if (iii)
or (v) hold. If (iii) holds, this is immediate. If (v) holds, it is
sufficient to use  the uniform convergence of $ |P(s+t, s)\varphi - m_{s}\varphi |^p$ to zero  as $t\to +\infty$, on
each ball $B(0,R)$ and Corollary \ref{cor:continuity}. In both cases, we have again $ \int_{\R^d}
|P(s+t, s)\varphi(x) - m_{s}\varphi |^p \mu_{s+t}(dx)\leq
(2\|\varphi\|_{\infty})^p$, for each $s\in (0,T)$. By dominated
convergence, $\lim_{t\to \infty} \| \T(t)(f-\Pi f)\|_{L^p (\U \times
\R^d , \mu)} =0$.

Since the linear span of the functions $f(s,x) = \alpha(s)\varphi(x)$, with $\alpha \in C(\U)$ and $\varphi \in C_b(\R^d)$, is dense in $L^p (\U \times \R^d , \mu)$, (i) follows.
  \end{proof}

\begin{theorem}
\label{Pr:EquivExp} Let Hypothesis \ref{hyp1} hold. Fix $1\leq p
\leq \infty$, $M>0$, $\omega \in \R$. The following conditions are
equivalent:
 \begin{enumerate}[{\rm (a)}]
 \item
 for every $t>0$ and $u\in  L^p(\U \times \R^{d},\mu)$,
\begin{eqnarray*}
\|\T(t)(I-\Pi)u\|_p \leq Me^{\omega t} \|u\|_p, \quad t>0, \; u\in
L^p(\U \times \R^{d},\mu);
\end{eqnarray*}
 \item
for every $t>s$ and $ \varphi \in L^p(\R^d, \mu_s)$,
\begin{eqnarray*}
\;\;\;\;\;\;\;\;\;\| P(t,s)\varphi  - m_s \varphi\|_{L^p(\R^d, \mu_t)} \leq
Me^{\omega(t-s)}\|\varphi \|_{L^p(\R^d, \mu_s)}, \quad t>s , \;
\varphi \in L^p(\R^d, \mu_s).
\end{eqnarray*}
 \end{enumerate}
\end{theorem}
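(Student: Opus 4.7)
Both directions rest on a single identity, obtained exactly as in Step~5 of the proof of Theorem \ref{Pr:equivalenza}. For $\alpha \in C(\U)$ and $\varphi \in C_b(\R^d)$, set $f(s,x) := \alpha(s)\varphi(x)$. Using the $T$-periodicity of the integrand under the change of variable $\sigma = s-t$, for $1 \le p < \infty$ one obtains
\begin{equation*}
\|\T(t)(f-\Pi f)\|_p^p = \frac{1}{T}\int_0^T |\alpha(\sigma)|^p\,\|P(\sigma+t,\sigma)\varphi - m_\sigma \varphi\|_{L^p(\R^d,\mu_{\sigma+t})}^p\, d\sigma,
\end{equation*}
while $\|f\|_p^p = T^{-1}\int_0^T |\alpha(\sigma)|^p\,\|\varphi\|_{L^p(\R^d,\mu_\sigma)}^p\, d\sigma$. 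The theorem is then reduced to a comparison of these two integrands in $\sigma$.

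The implication (b)$\Rightarrow$(a) is immediate when $1 \le p < \infty$: substituting (b) into the first formula yields $\|\T(t)(f-\Pi f)\|_p \le M e^{\omega t}\|f\|_p$ for every product $f$; since the linear span of such products is dense in $L^p(\U \times \R^d,\mu)$ (as used in Step~5 above) and $I-\Pi$ is bounded on $L^p$, the estimate extends to all $f \in L^p(\U \times \R^d,\mu)$. For $p = \infty$ one takes $f$ independent of $s$, so that $\T(t)(f-\Pi f)(s,x) = P(s,s-t)\varphi(x) - m_{s-t}\varphi$, and the sup bound follows directly from (b).

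For (a)$\Rightarrow$(b), fix $\varphi \in C_b(\R^d)$ and $t > 0$. By Theorem \ref{Th:P(t,s)}(i), the function $u(s,x) := |P(s,s-t)\varphi(x) - m_{s-t}\varphi|^p$ lies in $C_b(\U \times \R^d)$, so Corollary \ref{cor:continuity}(a) applied to $u$ gives continuity of $\sigma \mapsto \|P(\sigma+t,\sigma)\varphi - m_\sigma \varphi\|_{L^p(\R^d,\mu_{\sigma+t})}^p = m_{\sigma+t}u(\sigma+t,\cdot)$, and applied to $|\varphi|^p$ gives continuity of $\sigma \mapsto \|\varphi\|_{L^p(\R^d,\mu_\sigma)}^p$. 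Combined with the identity above, hypothesis (a) reads
\begin{equation*}
\int_0^T |\alpha(\sigma)|^p\Bigl[\|P(\sigma+t,\sigma)\varphi - m_\sigma \varphi\|_{L^p(\R^d,\mu_{\sigma+t})}^p - M^p e^{p\omega t}\|\varphi\|_{L^p(\R^d,\mu_\sigma)}^p\Bigr]\,d\sigma \le 0
\end{equation*}
for every $\alpha \in C(\U)$. Since $|\alpha|^p$ ranges over a dense subset of the cone of nonnegative continuous functions on $\U$, the continuous bracketed expression must be $\le 0$ pointwise in $\sigma$, which is precisely (b) at $s = \sigma$. By $T$-periodicity this yields (b) for every $s \in \R$, first for $\varphi \in C_b(\R^d)$, and then for arbitrary $\varphi \in L^p(\R^d,\mu_s)$ by density and the $L^p$-contractivity of $P(t,s)$ and $m_s$. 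The case $p = \infty$ is handled as in the previous direction.

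The one delicate step is the continuity in $\sigma$ of the two integrands, which is what permits the upgrade from an integral inequality valid for all $\alpha$ to a pointwise inequality in $\sigma$; Corollary \ref{cor:continuity}(a) supplies exactly this continuity, and the remainder of the proof is routine density.
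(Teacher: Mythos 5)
Your implication (a)$\Rightarrow$(b) is correct and is essentially the argument the paper outsources to \cite{GL2}: you test (a) against products $\alpha\varphi$, observe that $|\alpha|^p$ sweeps out all nonnegative continuous weights on $\U$, and use the continuity in $\sigma$ of both integrands (supplied by Theorem \ref{Th:P(t,s)}(i) and Corollary \ref{cor:continuity}(a)) to upgrade the integral inequality to a pointwise one. This fills in details the published proof omits, and the identification of the continuity of $\sigma\mapsto \|P(\sigma+t,\sigma)\varphi-m_\sigma\varphi\|^p_{L^p(\R^d,\mu_{\sigma+t})}$ as the crucial point is exactly right.

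The implication (b)$\Rightarrow$(a), however, contains a genuine logical gap. You establish $\|\T(t)(f-\Pi f)\|_p\le Me^{\omega t}\|f\|_p$ only for rank-one products $f=\alpha\varphi$, and then assert that density of the linear span of such products extends the estimate to all of $L^p(\U\times\R^d,\mu)$. That inference is false: a bound $\|Lf\|\le C\|f\|$ for a linear operator $L$, verified on a set $S$ that is not a linear subspace, does not pass to the closed linear span of $S$ (already in $\R^2$ with the Euclidean norm one can have $\|Le_1\|=\|e_1\|$, $\|Le_2\|=\|e_2\|$, but $\|L(e_1+e_2)\|>\|e_1+e_2\|$). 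Density arguments require the estimate on a dense \emph{subspace}, and for a sum $\sum_k\alpha_k\varphi_k$ your computation gives no control because $\|\sum_k\alpha_k\varphi_k\|_p$ is not comparable to $\sum_k\|\alpha_k\varphi_k\|_p$.

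The repair is immediate and is what the paper actually does: do not restrict to products. For a general $u\in L^p(\U\times\R^d,\mu)$ one has $u(\sigma,\cdot)\in L^p(\R^d,\mu_\sigma)$ for a.e.\ $\sigma$ by Fubini, and
\begin{equation*}
\|\T(t)(I-\Pi)u\|_p^p=\frac1T\int_0^T\big\|P(\sigma+t,\sigma)u(\sigma,\cdot)-m_\sigma u(\sigma,\cdot)\big\|^p_{L^p(\R^d,\mu_{\sigma+t})}\,d\sigma ,
\end{equation*}
so applying (b) slicewise with $\varphi=u(\sigma,\cdot)$ and integrating in $\sigma$ yields (a) directly, with no density step at all.
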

\begin{proof}
For $p=\infty$ the equivalence  is immediate.

The proof that (a)$\Rightarrow $(b) for $p<\infty$ is quite similar to the proof of Step 2 of  \cite[Thm.~2.17]{GL2},  that concerns $p=2$ and backward Ornstein-Uhlenbeck evolution operators. In our periodic case we do not need the localization function $\xi$ of \cite{GL2}, it is sufficient to define $u(s,\cdot) = \varphi $ for every $s$.  We omit  the details of the proof, leaving them to the reader.

Still for $p<\infty$, (b)$\Rightarrow $(a) is easy. For, if (b) holds, then for $s\in \R$,
$t>0$, and $u\in L^p(\U \times \R^{d},\mu)$, we have
\begin{eqnarray*}
\int_{\R^d} |P(s, s-t)u(s-t,\cdot) - m_{s-t}u(s-t, \cdot)|^p d\mu_s
\leq   M^p e^{\omega pt} \int_{\R^d}|u(s-t, \cdot)|^p d\mu_{s-t},
\end{eqnarray*}
and integrating over $[0,T]$ we obtain
\begin{align*}
\int_{\U \times \R^d}|\T(t)(I-\Pi)u|^p d\mu
&\leq  M^p e^{\omega pt} \frac{1}{T} \int_{0}^{T} \int_{\R^d}|u(s-t, \cdot)|^p d\mu_{s-t}ds\\
&=  M^p e^{\omega pt} \frac{1}{T} \int_{0}^{T} \int_{\R^d}|u(\tau , \cdot)|^p d\mu_{\tau }d\tau\\
&=  M^p e^{\omega pt}\|u\|_p^p.
\end{align*}
 \end{proof}

\begin{remark}
{\rm It is also possible to relate the asymptotic behavior  of
$\nabla_x\T(t)$ to the asymptotic behavior of  $\nabla_xP(t,s) $.
Namely, results similar to  Theorems \ref{Pr:equivalenza} and
\ref{Pr:EquivExp} hold, with $|\nabla_x \T(t)u |$ and
$|\nabla_xP(t,s)\varphi |$ replacing $\T(t) (I-\Pi)u$ and
$P(t,s)\varphi -m_s\varphi$, respectively. The details are left to
the reader.} \end{remark}

In view of Theorems \ref{Pr:equivalenza} and \ref{Pr:EquivExp}, we study the decay to
zero of $\T(t)(I-\Pi)$. The starting point is the decay  of
$|\nabla_x \T(t)f|$ as $t \to \infty$, for every $f\in L^2(\U
\times \R^d, \mu)$. Since everything relies on formula
\eqref{integration<Hu}, we need that the assumptions of Proposition
\ref{lemma1 parte1} hold. The proof of the following proposition is
an extension to the evolution semigroup of a similar proof for
Markov semigroups generated by elliptic operators (e.g., \cite{DPG}).

\begin{proposition}
\label{DecadimentoGradiente} Let   Hypotheses \ref{hyp1} and
\ref{hyp2} hold. If the diffusion coefficients are bounded, or if
\eqref{stimapiu'} is satisfied, then for every $f\in L^2(\U \times
\R^d, \mu)$ we have
\begin{equation}
\label{decad2} \lim_{t\to\infty} \| \, | \nabla_x \T(t)f|\,\|_2 =0.
\end{equation}
\end{proposition}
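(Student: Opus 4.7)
The plan is to combine an identité de carré du champ with integration in time and a uniform-continuity argument. I would first reduce to $f\in D(G_\infty)$ by density: this subspace is dense in $L^2(\U\times\R^d,\mu)$ by Proposition~\ref{prop-2.9}, and the smoothing estimate \eqref{grad-p-semigruppone>1} supplies a uniform bound $\|\,|\nabla_x\T(t)h|\,\|_2\leq M\|h\|_2$ for $t\geq 1$, so approximating $f$ in $L^2$ by elements of $D(G_\infty)$ suffices to handle the general case.

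Fix $f\in D(G_\infty)$. By Proposition~\ref{prop-2.9}, $\T(t)f$ stays in $D(G_\infty)\subset D(G_2)$. Applying Corollary~\ref{cor:2}(b) with $p=2$ and $u=\T(t)f$ and using $Q\geq \eta_0 I$ gives
\begin{equation*}
\frac{d}{dt}\|\T(t)f\|_2^2 = 2\int_{\U\times\R^d}\T(t)f\,G_2\T(t)f\,d\mu \leq -2\int_{\U\times\R^d}\langle Q\nabla_x\T(t)f,\nabla_x\T(t)f\rangle\,d\mu \leq -2\eta_0\|\,|\nabla_x\T(t)f|\,\|_2^2.
\end{equation*}
Integrating in time and using $\|\T(T)f\|_2\geq 0$ yields $\int_0^\infty\|\,|\nabla_x\T(s)f|\,\|_2^2\,ds\leq\|f\|_2^2/(2\eta_0)$. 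Since $\|\T(t)f\|_2$ is non-increasing and non-negative it converges, so the tail integrals $\int_t^{t+1}\|\,|\nabla_x\T(s)f|\,\|_2^2\,ds$ tend to $0$ as $t\to\infty$.

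To convert this averaged decay into the pointwise statement \eqref{decad2}, I would show that $\phi(t):=\|\,|\nabla_x\T(t)f|\,\|_2^2$ is Lipschitz on $[1,\infty)$ and then use the standard fact that a non-negative, uniformly continuous, integrable function on $[1,\infty)$ must tend to zero. The Lipschitz bound follows by writing $\T(t+h)f-\T(t)f=\int_t^{t+h}\T(s)G_\infty f\,ds$, commuting $\nabla_x$ with the Bochner integral, and controlling the integrand via the $L^\infty$-gradient estimate \eqref{grad-infty-semigruppone>1}, which for $s\geq 1$ gives $\|\,|\nabla_x\T(s)G_\infty f|\,\|_2\leq C_2\|G_\infty f\|_\infty$; combined with the uniform bound $\|\,|\nabla_x\T(t)f|\,\|_2\leq C_2\|f\|_\infty$ for $t\geq 1$, this yields $|\phi(t+h)-\phi(t)|\leq 2C_2^2\|f\|_\infty\|G_\infty f\|_\infty\,h$ for $t\geq 1$ and $h\in[0,1]$.

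The main obstacle is precisely this last regularity step: the energy inequality alone only forces $\phi$ to be integrable on $[0,\infty)$, which does not in general imply $\phi(t)\to 0$. Recovering the missing regularity of $\phi$ via the semigroup identity $\T(s)G_\infty f=G_\infty\T(s)f$ and the $L^\infty$-smoothing bound is what closes the evolution-semigroup argument, and one must also check that the auxiliary smoothing estimates required for the density reduction are genuinely available—automatic under bounded diffusion coefficients, and needing separate verification of the finiteness of $\ell_2$ under assumption \eqref{stimapiu'}.
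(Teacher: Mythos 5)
Your core argument for $f\in D(G_{\infty})$ is sound and runs parallel to the paper's: both start from the carr\'e du champ inequality \eqref{integration<Hu} with $p=2$ applied to $\T(t)f$, deduce that $\chi_f(t):=\|\,|\nabla_x\T(t)f|\,\|_2^2$ is integrable on $(0,\infty)$ with $\int_0^\infty\chi_f\le \|f\|_2^2/(2\eta_0)$, and then upgrade integrability to decay via a regularity property of $\chi_f$. The paper does this last step by taking $f\in D((G_2)^2)$ and showing $\chi_f'\in L^1(0,\infty)$ (via $|\chi_f'|\le\chi_f+\chi_{G_2f}$); you instead take $f\in D(G_{\infty})$ and show $\chi_f$ is Lipschitz on $[1,\infty)$ using $\T(t+h)f-\T(t)f=\int_t^{t+h}\T(s)G_{\infty}f\,ds$ and the sup-norm gradient bound \eqref{grad-infty-semigruppone>1}. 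That variant is correct (the commutation of $\nabla_x$ with the Bochner integral is justified because $\mu$ has a positive continuous density, so $\nabla_x$ is closed on $L^2(\mu)$), and it is an acceptable substitute for the paper's computation.

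The genuine gap is in your density reduction. You invoke \eqref{grad-p-semigruppone>1} to get the uniform operator bound $\|\,|\nabla_x\T(t)h|\,\|_2\le M\|h\|_2$ for $t\ge 1$, but that estimate is only available when $\ell_2<\infty$, and the hypotheses of the proposition do not imply this in the case of \eqref{stimapiu'} with unbounded diffusion coefficients: in \eqref{ell_p} the quantity $\ell_2$ involves $\eta(s,x)$ itself, so e.g.\ for $Q(s,x)=(1+|x|^2)I$ one has $\zeta\equiv$ const and $\eta$ unbounded, hence $\ell_2=+\infty$, even though \eqref{stimapiu'} may hold. (Under bounded diffusion coefficients $\eta\le\Lambda$ and $\ell_2<\infty$ is automatic, so there your reduction is fine.) The paper closes the approximation differently: it approximates by elements of $D((G_2)^2)$ and uses Corollary \ref{cor:2}(a)--(b), whose proof yields the $t$-uniform bound $\eta_0\|\,|\nabla_x\T(t)h|\,\|_2^2\le\|\T(t)h\|_2\,\|G_2\T(t)h\|_2\le\|h\|_2\,\|G_2h\|_2$ for $h\in D(G_2)$; this controls the gradient of $\T(t)h$ by the \emph{graph norm} of $h$ rather than its $L^2$ norm and is valid under either of the two alternative hypotheses. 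To repair your proof in the \eqref{stimapiu'} case you must replace the appeal to \eqref{grad-p-semigruppone>1} by an estimate of this kind (and correspondingly approximate in a topology for which it is effective), since no $L^2\to L^2$ smoothing bound for $\nabla_x\T(t)$ is at your disposal there.
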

\begin{proof}   Let $f \in D(G_2)$. From the equality
\begin{eqnarray*}
\frac{d}{dt} \|\T(t)f \|_{2}^{2} = 2\langle T(t)f, G_2\T(t)f\rangle
_{L^2(\U\times \R^d, \mu)},\qquad\;\,t>0,
\end{eqnarray*}
we obtain
\begin{eqnarray*}
\|T(t)f \|_{2}^{2} - \| f \|_{2}^{2} = 2\int_{0}^{t}\int_{\U\times
\R^d}  \T(s)f\, G_2 \T(s)f \, d\mu\; ds,\qquad\;\,t>0,
\end{eqnarray*}
and using \eqref{integration<Hu}, that holds for the functions in
$D(G_2)$ by Corollary \ref{cor:2}(b), we get
\begin{equation}
\label{midnight}
\|\T(t)f \|_{2}^{2} +2\int_{0}^{t}\int _{\U\times \R^d}  \langle
Q\nabla _{x}\T(s)f , \nabla_x\T(s)f \rangle  d\mu\, ds \leq  \| f
\|_{2}^{2},\qquad\;\, t>0.
\end{equation}
Therefore, the function
\begin{eqnarray*}
\chi _{f }(s) := \int _{\U\times \R^d}   | \nabla _{x}
\T(s)f |^{2}  d\mu , \;\; s\geq 0,
\end{eqnarray*}
is in $L^{1}(0,\infty)$, and its $L^{1}$-norm does not exceed $\| f
\|_{2}^{2}/\eta_0$. Its derivative is
\begin{eqnarray*}
\chi _{f}'(s) = \int _{\U\times \R^d}   2  \langle  \nabla_{x}
T(s)f , \nabla_{x}T(s)G_2f \rangle  \, d\mu
\end{eqnarray*}
so that, if $f\in D((G_2)^2)$,
\begin{eqnarray*}
   |\chi _f'(s)|   \leq    2 \bigg(\int _{\U\times \R^d}    |\nabla_{x}
T(s)f |^{2}d\mu\bigg)^{\frac{1}{2}} \bigg(\int _{\U\times \R^d}
|\nabla_{x} T(s)G_2 f|^{2}d\mu\bigg)^{\frac{1}{2}} \leq  \chi
_{f}(s) + \chi _{G_2f}(s),
\end{eqnarray*}
for any $s\ge 0$. Therefore, also $\chi _{f}'$ is in
$L^{1}(0,\infty)$. This implies that $\lim_{s\to\infty }\chi _{f}(s)
=0$, and \eqref{decad2}  holds for every $f \in D((G_2)^2)$. For
general $f \in L^2(\U \times \R^d, \mu)$, \eqref{decad2} follows
approaching  $f$
by a sequence of functions in $D((G_2)^2)$, which is dense in
$L^2(\U \times \R^d, \mu)$, and using Corollary \ref{cor:2}(a).
\end{proof}

\begin{theorem}
\label{decadimento}
Let  the assumptions of Proposition \ref{DecadimentoGradiente} hold. Then,
for every $p\in [1,\infty)$
\begin{equation}
\label{decadimentoL^p}
\lim_{t\to \infty} \|\T(t)(I-\Pi)f\|_p =0, \quad f\in L^p(\U \times \R^d,\mu).
\end{equation}
Therefore, statements (ii) to (iv) of Theorem \ref{Pr:equivalenza} hold.
\end{theorem}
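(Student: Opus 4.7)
The plan is to prove \eqref{decadimentoL^p} first for $p=2$, by combining the gradient decay of Proposition \ref{DecadimentoGradiente} with Poincar\'e--Wirtinger on balls and uniform tightness of $\{\mu_s\}$, and then to extend to every $p\in[1,\infty)$ by density and interpolation. The key preliminary observation is that $\Pi$ commutes with $\T(t)$: evaluating at $(s,x)$, both $\T(t)\Pi f$ and $\Pi\T(t)f$ equal $\int_{\R^d}f(s-t,y)\,\mu_{s-t}(dy)$, by the evolution identity $\int_{\R^d} P(t,s)\varphi\,d\mu_t = \int_{\R^d}\varphi\,d\mu_s$. Hence $g_t:=\T(t)(I-\Pi)f = (I-\Pi)\T(t)f$ satisfies $\Pi g_t\equiv 0$ and $\nabla_x g_t = \nabla_x\T(t)f$, so that for $f\in L^2(\U\times\R^d,\mu)$ Proposition \ref{DecadimentoGradiente} gives $\|\,|\nabla_x g_t|\,\|_2\to 0$ as $t\to\infty$.

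For the $p=2$ step, by $L^2$-contractivity of $\T(t)(I-\Pi)$ and density of $L^\infty$ in $L^2(\U\times\R^d,\mu)$, I may assume $f\in L^\infty$, so that $\|g_t\|_\infty\leq 2\|f\|_\infty =: M$ uniformly in $t$. Fix $\varepsilon>0$ and, by Theorem \ref{Th:P(t,s)}(v), choose $R>0$ with $\mu_s(\R^d\setminus B(0,R))<\varepsilon$ for every $s\in\R$. Denote by $\bar g_t(s)$ the Lebesgue average of $g_t(s,\cdot)$ over $B(0,R)$. Poincar\'e--Wirtinger on $B(0,R)$, transferred from Lebesgue measure to $\mu$ via the continuous positive density $\rho$ (identified just after Proposition \ref{Pr:uniqueness}), gives
\begin{equation*}
\|g_t - \bar g_t\|_{L^2(\U\times B(0,R),\mu)} \leq C(R)\,\|\,|\nabla_x g_t|\,\|_{L^2(\U\times B(0,R),\mu)},
\end{equation*}
and the right-hand side tends to $0$ as $t\to\infty$. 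The vanishing $\Pi g_t\equiv 0$ then yields, a.e.\ in $s$,
\begin{equation*}
\bar g_t(s)\,\mu_s(B(0,R)) = \int_{B(0,R)}(\bar g_t(s)-g_t(s,y))\,\mu_s(dy) - \int_{\R^d\setminus B(0,R)} g_t(s,y)\,\mu_s(dy);
\end{equation*}
Cauchy--Schwarz on the first term, the bound $M\varepsilon$ on the second, and $\mu_s(B(0,R))\geq 1-\varepsilon$ give, after squaring and integrating in $s$, $\|\bar g_t\|_{L^2(\U,ds/T)} \leq C_1\|g_t-\bar g_t\|_{L^2(\U\times B(0,R),\mu)} + C_2 M\varepsilon$. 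Splitting $\|g_t\|_2^2 = \|g_t\|_{L^2(\U\times B(0,R),\mu)}^2 + \|g_t\|_{L^2(\U\times(\R^d\setminus B(0,R)),\mu)}^2$ and bounding the latter term by $M^2\varepsilon$ produces $\limsup_{t\to\infty}\|g_t\|_2^2 \leq K(M)\,\varepsilon$; letting $\varepsilon\to 0$ gives the $p=2$ conclusion.

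For general $p\in[1,\infty)$ I would split into cases. If $p\in[1,2]$, the probability-measure inequality $\|\cdot\|_p\leq\|\cdot\|_2$ transfers the $p=2$ convergence to any $f\in L^2$, and density of $L^\infty$ in $L^p$ together with the $L^p$-contractivity of $\T(t)(I-\Pi)$ concludes. If $p\in(2,\infty)$, the H\"older interpolation $\|u\|_p\leq\|u\|_2^{2/p}\|u\|_\infty^{1-2/p}$, applied to $u=\T(t)(I-\Pi)f$ with $f\in L^\infty$, combines the $p=2$ decay with the uniform bound $\|g_t\|_\infty\leq 2\|f\|_\infty$ to yield $\|g_t\|_p\to 0$; density of $L^\infty$ in $L^p$ and $L^p$-contractivity finish the argument. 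Finally, \eqref{decadimentoL^p} is exactly statement (i) of Theorem \ref{Pr:equivalenza}, and (ii)--(iv) follow from the equivalences proved there.

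The main obstacle is the $p=2$ step. Since no Poincar\'e-type inequality for $\mu_s$ is assumed, one cannot pass from gradient decay to $L^2$-decay through a single inequality; instead, one must blend three ingredients---local Poincar\'e--Wirtinger (transferred from Lebesgue to $\mu$ via the continuous positive density $\rho$), uniform-in-$s$ tightness of $\{\mu_s\}$, and the fundamental vanishing $\Pi g_t\equiv 0$, which forces the spatial average $\bar g_t(s)$ (and hence $g_t$ itself) to be small in $L^2$.
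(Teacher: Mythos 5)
Your proof is correct, but it follows a genuinely different route from the paper's. The paper also starts from Proposition \ref{DecadimentoGradiente}, but it works on the core $\mathcal C$ of Proposition \ref{Pr:core}: it shows that $\{\T(t)(I-\Pi)f:\,t>0\}$ is equibounded and equicontinuous on $\U\times B(0,R)$ (this requires the interior bound \eqref{eq:MaggLoc} on $\A(s)P(s,\tau)\chi$ to control the time derivative), extracts a locally uniform limit $g$ by Arzel\`a--Ascoli, and identifies $g\equiv 0$ qualitatively: the gradient is a closed operator in $L^2(\U\times\R^d,\mu)$ because $\rho>0$, so $g$ is independent of $x$, and a space-independent element of $(I-\Pi)(L^2)$ vanishes. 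Your argument replaces all of this compactness machinery by a quantitative step at $p=2$: the local Poincar\'e--Wirtinger inequality on $B(0,R)$ (transferred to $\mu$ via the continuous positive density $\rho$, whose bounds on $\overline{\U\times B(0,R)}$ are uniform in $s$ by periodicity and compactness), the uniform tightness of $\{\mu_s\}$ from Theorem \ref{Th:P(t,s)}(v), and the zero-mean condition $\Pi g_t\equiv 0$ together yield an ``almost-Poincar\'e'' bound $\|g_t\|_2^2\le C(R)\|\,|\nabla_x g_t|\,\|_2^2+K(M)\varepsilon$, from which the $L^2$ decay follows; the passage to general $p$ by $\|\cdot\|_p\le\|\cdot\|_2$ for $p\le 2$ and by H\"older interpolation against the uniform bound $\|g_t\|_\infty\le 2\|f\|_\infty$ for $p>2$, combined with density of $L^\infty$ and the uniform bound $\|\T(t)(I-\Pi)\|_{{\mathcal L}(L^p)}\le 2$, is sound. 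Your observation that $\Pi$ commutes with $\T(t)$ is correct and essential. What your approach buys is independence from the specific structure of the core $\mathcal C$ and from the equicontinuity estimates; what the paper's approach buys is that it treats all $p\in[1,\infty)$ in one stroke via dominated convergence of the locally uniform limit, without needing the Poincar\'e--Wirtinger constant or the splitting into $p\le 2$ and $p>2$.
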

\begin{proof}
Let $f\in \mathcal C$, which is dense in $L^p(\U \times \R^d,\mu)$
by Proposition \ref{Pr:core}. Then, $f$ is a linear combination of
functions $ u_{\tau, \chi, \alpha}$, defined before Proposition \ref{Pr:core}.

Let us prove that, for each $u =  u_{\tau, \chi, \alpha}$, the set of
functions $\{ \T(t)(I-\Pi)u: t>0\}$  is equicontinuous and
equibounded in $\R \times B(0,R)$, for each $R>0$.

Since $\Pi u (s,x) = \alpha(s) m_{\tau}\chi$, then $\T(t)\Pi u (s,x)
= \alpha(s-t)m_{\tau}\chi$ is equicontinuous and equibounded.
Concerning $\T(t)u$, we recall that it is the time periodic
extension of the function $(s,x)\mapsto \alpha(s-t)P(s,\tau)\chi(x)$
defined for $s\in [a +t, a +t +T)$, $x\in \R^d$, if the support of
$\alpha$ is contained in $(a, a+T)$ with $a\geq \tau$. We have to
prove only equicontinuity, since $\|\T(t)
u\|_{\infty}\leq\|\alpha\|_{\infty} \|\chi\|_{\infty}$. By Theorem
\ref{Th:gradP(t,s)infinito}, $\|\,| \nabla_x
\alpha(s-t)P(s,\tau)\chi|\,\|_{L^{\infty}(\R^d)} \leq
C_1\|\alpha\|_{\infty} \|\chi\|_{C^1_b(\R^d)}$, so that $\T(t)u$ is
equi-Lipschitz continuous in $x$. To prove that it is equi-Lipschitz
continuous in $s$ we show preliminarily that, for every $R>0$,
\begin{equation}
\label{eq:MaggLoc} \sup _{s\geq \tau, |x|\leq R}|\A
(s)P(s,\tau)\chi(x)| < \infty.
\end{equation}
From the proof of \cite[Thm. 2.2]{KLL} we know  that the function
$(s,x)\mapsto P(s,\tau)\chi(x)$ belongs to $C^{1+\alpha/2,
2+\alpha}_{\loc}([\tau, \infty)\times \R^d)$ and, therefore,
\begin{eqnarray*}
\sup_{\tau \leq s\leq \tau + 2T, \,|x| \leq R}|\A(s)P(s,\tau)\chi(x)| <\infty.
\end{eqnarray*}
 If $s\in (\tau +kT, \tau + (k+1)T]$
with $k\geq 2$ we write
\begin{eqnarray*}
P(s,\tau)\chi = P(s,\tau +(k-1)T)P(\tau + (k-1)T, \tau)\chi :=
P(\sigma, \tau)\varphi,
\end{eqnarray*}
with $\sigma = s-(k-1)T\in (\tau +T, \tau +2T]$, $\varphi = P(\tau +
(k-1)T, \tau)\chi \in C_b(\R^d)$, $\|\varphi \|_{\infty} \leq
\|\chi\|_{\infty}$. By Theorem \ref{Th:P(t,s)}(i),
\begin{eqnarray*}
\sup \{ |\A(\sigma )P(\sigma , \tau) \varphi : \tau +T\leq \sigma
< \tau + 2T, \;\,|x|\leq R \} \leq C(R)\|\varphi \|_{\infty},
\end{eqnarray*}
and \eqref{eq:MaggLoc} follows.

From the equality
\begin{eqnarray*}
D_s \T(t)u(s, \cdot)  = \alpha'(s-t) P(s,\tau)\chi  + \alpha(s-t) \A
(s)P(s,\tau)\chi , \quad s\in [a +t, a +t +T),
\end{eqnarray*}
using  \eqref{eq:MaggLoc} we obtain that $D_s\T(t)u$ is bounded in
$[a +t, a +t +T) \times B(0,R)$. Since it is periodic in $s$,
it is bounded in $\R \times B(0,R)$.

Therefore, for each $f\in \mathcal C$ the set of functions $\{
\T(t)f: t>0\}$  is equicontinuous and equibounded in $\R \times
B(0,R)$, for each $R>0$. By the Arzel\`a-Ascoli Theorem and the
usual diagonal procedure, there exist a sequence $t_n \to \infty$
and a function $g\in C_b(\U \times \R^d)$ such that
$\T(t_n)(I-\Pi)f$ converges to $g$ uniformly on $\U \times B(0,R)$,
for each $R>0$. Since $\|\T(t_n)(I-\Pi)f\|_{\infty}\leq
\|f\|_{\infty}$, by dominated convergence $\T(t_n)(I-\Pi)f$
converges to $g$ in $L^p(\U \times \R^d,\mu)$, for every $p\in [1,
\infty)$.

Let us prove that $g\equiv 0$. We have $\lim_{n\to\infty}\|\T(t_n)(I-\Pi)f - g\|_{2} =0$,
moreover, by Proposition \ref{DecadimentoGradiente},
$\lim_{n\to\infty}\|\,|\nabla_x \T(t_n)(I-\Pi)f|\, \|_{2}=0$. Since the density $\rho$ of $\mu$ with
respect to the Lebesgue measure is positive, the space derivatives
are closed operators in $L^2(\U \times \R^d,\mu)$. This implies that
$g\in W^{0,1}_2(\U \times \R^d,\mu)$ has null space derivatives, so
that it depends only on $s$. On the other hand, $g\in (I-\Pi)(L^2(\U
\times \R^d,\mu))$ because it is the limit of the sequence
$(\T(t_n)(I-\Pi)f)$ that has values in $(I-\Pi)(L^2(\U \times \R^d,\mu))$.
If a function in $(I-\Pi)(L^2(\U \times \R^d,\mu))$ is independent
of the space variables, it vanishes. Therefore, $g\equiv 0$. Since
the only possible limit $g$ is zero, then $\lim_{t\to \infty}
\|\T(t )(I-\Pi)f  \|_{p} =0$, for every $p\in [1, \infty)$.

Since $\mathcal C$ is dense in $L^p(\U \times \R^d,\mu)$ and
$\|\T(t)(I-\Pi)\|_{{\mathcal L}(L^p(\U \times \R^d,\mu))} \leq 1$,
\eqref{decadimentoL^p}  follows. Theorem \ref{Pr:equivalenza}  yields the other statements.
\end{proof}

For   $\varphi\in C_b(\R^d)$, the convergence of $P(t,s)\varphi -
m_s\varphi$ to $0$ is not uniform in $\R^d$ in general, even in the
autonomous case. Take for instance any Ornstein-Uhlenbeck operator
${\mathcal A}$,
\begin{eqnarray*}
{\mathcal A}\varphi =\frac{1}{2}\tr  (QD^2\varphi) + \langle Bx, \nabla \varphi \rangle,
\end{eqnarray*}
where $Q $ is  symmetric and positive definite   and all the
eigenvalues of  $B $ have negative real part. Then, the
Ornstein-Uhlenbeck semigroup $T(t)$ has a unique invariant measure
$\mu$,  which is the Gaussian measure with zero mean and covariance
operator $Q_{\infty} :=  \int_0^{\infty} e^{sB}Qe^{sB^*}ds$. We have
$P(t,s) = T(t-s)$ and $\mu_t = \mu$ for every $t\in \R$.

Take an exponential function $g=e^{i\langle\cdot,h\rangle}$ ($h\in\R^d$). Then
\begin{eqnarray*}
T(t)g = \exp\left (-\textstyle{\frac{1}{2}}\langle Q_t h,h\rangle
+i\langle \cdot,e^{tB^*}h\rangle\right ),\qquad\;\,t\ge 0,
\end{eqnarray*}
where $Q_{t} :=  \int_0^{t}
e^{sB}Qe^{sB^*}ds$. A simple computation shows that
$\int_{\R^d}g\, d\mu = e^{-\langle Q_{\infty} h,h\rangle /2}$.
Therefore,
\begin{eqnarray*}
\;\;\;\;\;\;\; T(t)g - \int_{\R^d}g \, d\mu &\hs{5}=\hs{5}&
\left \{ \exp\left (-\textstyle{\frac{1}{2}}\langle Q_{t}  h,h\rangle\right )
- \exp\left (-\textstyle{\frac{1}{2}}\langle Q_{\infty}  h,h\rangle\right )\right\}e^{i\langle \cdot,e^{tB^*}h\rangle}\\
&\hs{5}&+ \exp\left (-\textstyle{\frac{1}{2}}\langle
Q_{\infty} h,h\rangle\right ) \left (\exp( i\langle
\cdot,e^{tB^*}h\rangle)-1\right ),
\end{eqnarray*}
for any $t>0$. The sup norm of the  first addendum in the right-hand side
 vanishes  as $t\to\infty$  but the second one does not, since, for any
$t>0$, $\sup_{x\in\R^d}|\exp( i\langle x,e^{tB^*}h\rangle)-1| =\sup
_{\theta\in\R}| \exp( i\theta )-1| $ $=2$.

\vspace{2mm}

Concerning  exponential rates of convergence, for every $p\in [1, \infty)$ let us define  the right half-lines
\begin{align*}
A_p : = \{ & \omega\in\R: \exists M_{\omega}>0 \;\;\mbox{s.t.} \;\,
\|\T(t)(f-\Pi f)\|_p\le M_{\omega} e^{\omega t}\|f-\Pi f\|_p
\;\;\mbox{for any}\;\,t\geq 0,\notag
\\
&f\in L^p(\U \times \R^{d},\mu)\},\\[1mm]
B_p := \{ & \omega\in\R: \exists
N_{\omega}>0 \;\;\mbox{s.t.}\;\,\|\,|\nabla_x\T(t)f|\, \|_p\le
N_{\omega} e^{\omega t}\|f\|_p\;\;\mbox{for any}\;\,t\geq 1, \notag
\\
& f\in L^p(\U \times \R^{d},\mu)\} ,
\end{align*}
and their infima
\begin{equation}
\label{omegap,gammap}
 \omega_{p}: =\inf A_p, \quad  \gamma_{p} :=\inf B_p.
\end{equation}
Then, $\omega_p \leq 0$ is  the growth bound of the part of $\T(t)$
in $(I-\Pi)( L^p(\U \times \R^{d},\mu))$. We recall that if   $\ell_p <\infty$, then $\ell_p\in B_p$
by Proposition \ref{thm-2.7},  hence $\gamma_p\leq \min\{\ell_p,0\}$.

\begin{theorem}
\label{gamma=omega}
Let  Hypotheses  \ref{hyp1}  and  \ref{hyp2} hold. Then  $A_p\subset B_p$ for every $p\in (1, \infty)$ such that
$\ell_p<\infty$. If the diffusion coefficients are bounded, $B_p \subset A_p $ for every $p\geq 2$.
\end{theorem}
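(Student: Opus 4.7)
The plan is to prove the two inclusions separately, by arguments of rather different nature: $A_p\subset B_p$ follows directly from the short-time smoothing estimate \eqref{grad-p-semigruppone>1}, whereas $B_p\subset A_p$ requires an energy identity combined with a nonlinear change of variables.

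For $A_p\subset B_p$ under $\ell_p<\infty$, fix $\omega\in A_p$ and $t\geq 1$, and decompose $\T(t)f=\T(1)\T(t-1)f$. The key remark is that $\T(r)\Pi f$ depends only on $s$ (indeed $\T(r)\Pi f(s,x)=m_{s-r}f(s-r,\cdot)$), and so does $\T(1)\T(t-1)\Pi f$; hence its space gradient vanishes. Thus $\nabla_x\T(t)f=\nabla_x\T(1)[\T(t-1)(I-\Pi)f]$, and \eqref{grad-p-semigruppone>1} at time $t=1$ together with the $A_p$-assumption gives
\begin{equation*}
\|\,|\nabla_x\T(t)f|\,\|_p\leq C_3e^{\ell_p}\|\T(t-1)(I-\Pi)f\|_p\leq 2C_3 M_\omega e^{\ell_p-\omega}\,e^{\omega t}\|f\|_p,
\end{equation*}
so $\omega\in B_p$.

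For $B_p\subset A_p$ with $p\geq 2$ and bounded diffusion coefficients, fix $\omega\in B_p$. If $\omega\geq 0$ then $\omega\in A_p$ trivially by contractivity of $\T(t)$, so assume $\omega<0$. Set $u:=f-\Pi f$ and $u_t:=\T(t)u$; the evolution-system identity $\int P(s,s-t)\varphi\,d\mu_s=\int\varphi\,d\mu_{s-t}$ gives $\Pi u_t\equiv 0$. Let $\phi(t):=\|u_t\|_p^p$. By a density argument based on Proposition \ref{Pr:core}, I may assume $u\in D(G_p)$, so that $u_t\in D(G_p)$ for every $t\geq 0$; then Corollary \ref{cor:2}(c) (which uses $p\geq 2$ and bounded diffusion) yields the equality
\begin{equation*}
\phi'(t)=-p(p-1)\int_{\U\times\R^d}|u_t|^{p-2}|Q^{1/2}\nabla_x u_t|^2\,d\mu.
\end{equation*}
Setting $\Lambda:=\sup_{s,x}\|Q(s,x)\|_{\mathcal L(\R^d)}<\infty$, applying H\"older with exponents $p/(p-2)$ and $p/2$, and then the $B_p$-estimate, I obtain for $t\geq 1$
\begin{equation*}
-\phi'(t)\leq p(p-1)\Lambda N_\omega^2\|u\|_p^2\,\phi(t)^{(p-2)/p}\,e^{2\omega t}.
\end{equation*}

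The decisive step is the substitution $\psi:=\phi^{2/p}$, which linearizes the differential inequality into $-\psi'(t)\leq 2(p-1)\Lambda N_\omega^2\|u\|_p^2\,e^{2\omega t}$. Since Theorem \ref{decadimento} guarantees $\phi(t)\to 0$, and hence $\psi(t)\to 0$, integrating from $t$ to $\infty$ yields $\psi(t)\leq \tfrac{(p-1)\Lambda N_\omega^2}{|\omega|}\|u\|_p^2\,e^{2\omega t}$ for $t\geq 1$, so that $\|u_t\|_p=\psi(t)^{1/2}\leq C\, e^{\omega t}\|u\|_p$ in this range; contractivity handles $t\in[0,1]$. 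The main conceptual obstacle is precisely this choice of exponent: the naive bound $\phi(t)^{(p-2)/p}\leq\phi(0)^{(p-2)/p}$ would yield only the rate $2\omega/p$, which agrees with $\omega$ only at $p=2$, and the substitution $\psi=\phi^{2/p}$ is exactly what recovers the sharp rate $\omega$ for every $p\geq 2$.
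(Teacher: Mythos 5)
Your proposal is correct and follows essentially the same route as the paper's proof: the inclusion $A_p\subset B_p$ via the decomposition $\T(t)=\T(1)\T(t-1)$ and estimate \eqref{grad-p-semigruppone>1}, and the inclusion $B_p\subset A_p$ via the carr\'e-du-champ identity of Corollary \ref{cor:2}(c), the H\"older bound with $\Lambda$, and integration of the resulting differential inequality for $\|\T(t)(f-\Pi f)\|_p^2$ (your $\psi=\phi^{2/p}$ is exactly the paper's $\beta(t)$) from $t$ to $\infty$ using Theorem \ref{decadimento}. The only point the paper handles more carefully is the differentiability of $\phi^{2/p}$ where $\phi$ may vanish, which it dispatches by noting that $\beta$ either vanishes on a half-line or is strictly positive; this is a minor technicality, not a gap.
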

\begin{proof}
Let
$\ell_p<\infty$. By Proposition \ref{thm-2.7}, $\T(t)$ maps $L^p(\U
\times \R^d, \mu)$ into $W^{0,1}_{p}(\U \times \R^d, \mu)$ for every
$t>0$.

 Fix $f\in L^p(\U \times \R^d, \mu)$ and $\omega \in A_p$.
Since $\Pi f$ is independent of $x$,
$\nabla_x\T(t)f=\nabla_x\T(t)(f-\Pi f)$. Taking
\eqref{grad-p-semigruppone>1} into account, for $t>1$ we   estimate
\begin{align*}
\|\nabla_x\T(t)(f-\Pi f)\|_p&=\|\nabla_x\T(1)(\T(t-1)(f-\Pi f))\|_p\\
&\le C_3e^{\ell_p}\|\T(t-1)(f-\Pi f)\|_p\\
&\le C_3e^{\ell_p}M_{\omega} e^{\omega(t-1)}\|f-\Pi f\|_p,
\end{align*}
so that $  \omega \in B_p$, and the first part of the statement is proved.

\vspace{2mm}

If the diffusion coefficients are bounded, set
\begin{equation}
\label{Lambda}
\Lambda:= \sup\{ \langle Q(s,x)\xi, \xi\rangle: s\in \U,\; x\in \R^d,  \;\xi\in
\R^d, \,|\xi|=1\}.
\end{equation}
Since $A_p\supset [ 0,\infty)$, if  $B_p\subset [0,\infty)$ the inclusion $B_p \subset A_p$ is obvious. So, we may assume that
$B_p \cap (-\infty , 0) \neq \varnothing$.

Fix $f\in (I-\Pi)(D(G_p))$ and $\omega \in B_p$,
$\omega <0$.  Then,
\begin{eqnarray*}
\frac{d}{dt}  \int_{\U \times \R^d} |\T(t)f|^pd\mu =  p \int_{\U
\times \R^d} |\T(t)f|^{p-2}\T(t)f  \,G_p\T(t )f d\mu ,
\end{eqnarray*}
so that, by \eqref{integration<Hu} and the H\"older inequality,
\begin{align*}
\frac{d}{dt}  \int_{\U \times \R^d} |\T(t)f|^pd\mu &=  -p(p-1)
\int_{\U \times \R^d}|\T(t)f|^{p-2}\langle Q\nabla_x \T(t)f, \nabla_x\T(t)f\rangle d\mu\\
&\ge -p(p-1)\Lambda \|\T(t)f\|_{p}^{p-2} \|\,| \nabla_x\T(t)f|\, \|_{p}^{2}\\
&\ge  -p(p-1)\Lambda \|\T(t)f\|_{p}^{p-2} N^2_{\omega} e^{2\omega
t}\|f\|_{p}^{2}.
\end{align*}
Therefore, the function
\begin{eqnarray*}
\beta(t) :=  \|\T(t)f\|_{p}^{2} =  \bigg( \int_{\U \times \R^d}
|\T(t)f|^pd\mu \bigg)^{\frac{2}{p}}, \quad t\geq 1
\end{eqnarray*}
either vanishes in a halfline, or it is strictly positive in $[1, \infty)$, and in this case
\begin{eqnarray*}
\beta'(t) = \frac{2}{p}  \|\T(t)f\|_{p}^{2-p} \frac{d}{dt} \int_{\U
\times \R^d} |\T(t)f|^pd\mu    \geq -2(p-1)\Lambda N^2_{\omega}
e^{2\omega t}\|f\|_{p}^{2}.
\end{eqnarray*}
Since $\lim_{t\to\infty}\beta(t)=0$ by Theorem \ref{decadimento},
then
\begin{eqnarray*}
\beta(t) = -\int_{t}^{\infty} \beta'(s)ds  \leq 2(p-1)\Lambda N^2_{\omega} \|f\|_{p}^{2}
\int_{t}^{\infty} e^{2\omega s}ds = \frac{(p-1)\Lambda
N^2_{\omega}}{|\omega|}  e^{2\omega t}\|f\|_{p}^{2},
\end{eqnarray*}
for any $t\ge 1$, that is,
\begin{eqnarray*}
 \|\T(t)f\|_{p}^{2}  \leq \widetilde{M}_p e^{2\omega t}\|f\|_{p}^{2}, \quad t\geq
 1.
 \end{eqnarray*}
Since $(I-\Pi)(D(G_p))$ is dense in $(I-\Pi)(L^p(\U\times\R^d,\mu))$, the above estimate holds
for any $f\in L^p(\U\times\R^d,\mu)$, and this implies that $ \omega \in A_p$. It
follows that $B_p\subset A_p$.
\end{proof}

The second part of the  proof of Theorem  \ref{gamma=omega} may be easily adapted to the case of unbounded diffusion coefficients, and it yields, for $p\geq 2$,
\begin{eqnarray*}
\|\T(t)f\|_{p}^{2}  \leq C e^{2\omega t}\|f\|_{p}^{2}, \quad f\in L^p(\U \times \R^d, \mu), \;t\geq 1,
\end{eqnarray*}
for every $\omega <0$ such that
\begin{equation}
\label{eq:speriamo}
\exists M:\;\;\| \langle Q\nabla_x\T(t)f, \nabla_x \T(t)f \rangle   \|_{p}  \le M  e^{\omega t}\|f\|_{p}, \quad
f\in L^p(\U \times \R^d, \mu), \; t\geq 1.
\end{equation}
But at the moment we are not able to give any sufficient conditions for \eqref{eq:speriamo} to hold, while a sufficient condition for $\gamma_p <0$ is $\ell_2 <0$, by  estimate \eqref{grad-p-semigruppone>1}.

Theorem  \ref{gamma=omega} has two important consequences. The first
one is about the spectral gap of $G_p$ and the solvability of the
equation $\lambda u - G_p u =f$; the second one is about the
asymptotic behavior of the evolution operator $P(t,s)$.

\begin{corollary}
\label{Cor:spectralgap}
Let Hypotheses  \ref{hyp1} and  \ref{hyp2} hold. Assume that the diffusion coefficients are bounded and that $\ell_2<0$.
Then:
\begin{itemize}
\item[(a)]
 $\sigma (G_p)\cap i\R = \{ 2\pi ik/T: k\in \Z\}$ consists of simple isolated eigenvalues for every $p\in (1, \infty)$.
In particular,  for every $f\in L^p(\U \times \R^d, \mu)$ the parabolic
problem $G_p u
=f$ is solvable  if and only if $\int_{\U \times \R^d}
f\,d\mu =0$. In this case, it has infinite solutions, and the
difference of two solutions is constant.
\item[(b)] For every $p\in (1, \infty)$ $G_p$ has a spectral gap. Specifically,
\begin{eqnarray*}
\;\;\;\;\sup \{ \mbox{\rm Re}\,\lambda: \lambda \in \sigma (G_p)\setminus  i\R \}
\le \left\{ \begin{array}{ll}
\ell_2, & \mbox{\rm if}\; p\geq 2,
\\[2mm]
2\ell_2 (1-1/p), & \mbox{\rm if}\; 1<p< 2.
\end{array}\right.
\end{eqnarray*}
\end{itemize}
\end{corollary}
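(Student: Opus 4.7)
The plan is to decompose $L^p(\U\times\R^d,\mu)$ via the projection $\Pi$ into the two $\T(t)$-invariant closed subspaces $\Pi L^p$ and $(I-\Pi)L^p$, analyze the restriction of $G_p$ to each piece, and then combine the two pictures using the Spectral Mapping Theorem (Theorem \ref{Th:SMT}).

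First, I would verify that $\Pi$ commutes with $\T(t)$. A direct computation, using that $P(s,s-t)$ preserves constants, gives $\T(t)\Pi f(s,x)= m_{s-t}f(s-t,\cdot)$; on the other hand, the defining property of the evolution system $\{\mu_s\}$ yields $\Pi\T(t)f(s,x)=\int_{\R^d}P(s,s-t)f(s-t,\cdot)\,d\mu_s=m_{s-t}f(s-t,\cdot)$ as well. Since $\Pi$ is a bounded projection of norm one, it leaves $D(G_p)$ invariant and commutes with $G_p$ there; consequently $\sigma(G_p)=\sigma(G_p|_{\Pi L^p})\cup\sigma(G_p|_{(I-\Pi)L^p})$. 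On $\Pi L^p\cong L^p(\U,ds/T)$, $\T(t)$ acts as the periodic translation $g\mapsto g(\cdot-t)$, whose generator is $-D_s$ on $W^{1,p}(\U,ds/T)$; by Fourier series its spectrum is exactly $\{2\pi ik/T:k\in\Z\}$, each point a simple eigenvalue with eigenfunction $e^{2\pi iks/T}$.

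Next I prove (b) by splitting on $p$. For $p\ge 2$, formula \eqref{ell_p} gives $\min\{p-1,1\}=1$, so $\ell_p=\ell_2<0$; the remark preceding Theorem \ref{gamma=omega} yields $\ell_p\in B_p$, and since the diffusion coefficients are bounded that theorem gives $B_p\subset A_p$, whence $\ell_2\in A_p$ and $\omega_p\le\ell_2$. For $1<p<2$, Theorem \ref{gamma=omega} is no longer directly available since $\ell_p$ may fail to be useful; instead I would interpolate. The operator $\T(t)(I-\Pi)$ has norm at most $2$ on $L^1(\U\times\R^d,\mu)$ (since $\T(t)$ is a contraction and $\|\Pi\|\le 1$) and, by the case $p=2$ just established, norm at most $M e^{\ell_2 t}$ on $L^2(\U\times\R^d,\mu)$. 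Riesz-Thorin interpolation with $\theta=2(1-1/p)$ then yields $\|\T(t)(I-\Pi)\|_{\L(L^p)}\le C e^{2(1-1/p)\ell_2 t}$, so $\omega_p\le 2(1-1/p)\ell_2<0$. Applying Theorem \ref{Th:SMT} to $\T_p(t)|_{(I-\Pi)L^p}$ converts this growth bound into the stated spectral estimate on $\sigma(G_p|_{(I-\Pi)L^p})$.

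Part (a) then follows: by (b), $\sigma(G_p|_{(I-\Pi)L^p})$ lies in $\{\Re\lambda\le\omega_p\}$ with $\omega_p<0$, so the two restriction spectra are disjoint and separated by the gap $|\omega_p|$; combining this with the translation-semigroup analysis gives $\sigma(G_p)\cap i\R=\{2\pi ik/T:k\in\Z\}$, all simple and isolated. For the solvability of $G_pu=f$, I would split $u=u_1+u_2$, $f=f_1+f_2$ according to $\Pi$. The equation $G_pu_2=f_2$ is uniquely solvable in $(I-\Pi)L^p$ because $0\notin\sigma(G_p|_{(I-\Pi)L^p})$; the equation $-D_su_1=f_1$ in $W^{1,p}(\U,ds/T)$ is solvable iff $\int_0^T f_1\,ds=0$, equivalently iff $\int_{\U\times\R^d}f\,d\mu=0$, and when solvable its solutions differ by an arbitrary constant, which is precisely the one-dimensional kernel of $G_p$. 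The main technical point is the $1<p<2$ case of (b): there $\ell_p$ carries no useful sign information and Theorem \ref{gamma=omega} gives nothing, so the $L^2$ decay must be transported to $L^p$ by interpolation.
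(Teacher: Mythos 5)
Your proof is correct and follows essentially the same route as the paper's: the bound $\omega_p\le\ell_2$ for $p\ge 2$ via $\ell_p=\ell_2\in B_p$ and Theorem \ref{gamma=omega}, Riesz--Thorin interpolation between $L^1$ and $L^2$ for $1<p<2$, and the $\Pi$/$(I-\Pi)$ splitting reducing part (a) to the periodic translation generator $-D_s$ on $L^p(\U,ds/T)$. The only cosmetic difference is that you invoke the Spectral Mapping Theorem where the standard Hille--Yosida bound on the resolvent of a generator of an exponentially decaying semigroup already suffices.
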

\begin{proof}
By Proposition \ref{thm-2.7},  $\gamma_2 \leq \ell_2$.
Since $\|\T(t)(I-\Pi)f\|_{1} \leq 2\|f\|_{1}$ for every $t>0$ and $f\in L^1(\U \times \R^d, \mu)$, using estimate \eqref{grad-p-semigruppone>1} with $p=2$ and interpolating between $L^1$ and $L^2$ we get
\begin{eqnarray*}
\|\T(t)(I-\Pi)f\|_{p} \leq M_{ p}e^{2\ell_2 (1-1/p)t} \|f\|_{p},
\end{eqnarray*}
for every $f\in L^p(\U \times \R^d, \mu)$.
Therefore, the spectrum of the part of $G_p$ in $(I-\Pi)(L^p(\U
\times \R^d, \mu))$ is contained in the halfplane Re$\,\lambda \leq
\ell_2$, if $p\geq 2$, and in the halfplane
Re$\,\lambda \leq 2\ell_2 (1-1/p)$, if $1<p<2$.
For the other values
of $\lambda$, it is convenient to write the equation $\lambda u -
G_pu = f$  as the system
\begin{eqnarray*}
\left\{
\begin{array}{l}
\lambda \Pi u - G_{p}\Pi u =\Pi f,
\\[3mm]
\lambda (I-\Pi) u - G_{p}(I-\Pi)u = (I-\Pi )f,
\end{array}\right.
\end{eqnarray*}
where the second equation is uniquely solvable. Setting $\Pi u =
\beta$, the first equation may be rewritten as
\begin{eqnarray*}
\beta \in W^{1,p}(\U,\textstyle\frac{ds}{T}), \quad \lambda \beta(s)
+ \beta ' (s) = m_s f(s, \cdot), \;\, s\in \U ,
\end{eqnarray*}
and it is uniquely solvable   if and only if $\lambda \neq 2\pi
ik/T$ for every $k\in \Z$. Since the eigenvalues $2\pi ik/T$ of the
realization of the first order derivative in $L^p(\U ,
\frac{ds}{T})$ are  simple, the eigenvalues
$2\pi ik/T$ of $G_p$ are simple too. In particular, for $\lambda =0$
the above equation is solvable if and only if $\int_{0}^{T} m_s f(s,
\cdot)ds =0$, which means $\int_{\U \times \R^d} f\,d\mu =0$, and in
this case the solutions differ by constants. The statements follow.
\end{proof}

\begin{corollary}
\label{Cor:ca} Let Hypotheses  \ref{hyp1} and  \ref{hyp2} hold. Assume that the diffusion coefficients are bounded and
that $\ell_2<0$. Then, for every $p>1$   there exists $M_p>0$ such that
\begin{equation}
\label{eq2} \| P(t,s)\varphi  - m_s \varphi \|_{L^p(\R^d, \mu_t)}
\leq M_pe^{\ell_2 (t-s)}\|\varphi \|_{L^p(\R^d, \mu_s)}, \quad t>s,
\; \varphi\in L^p(\R^d, \mu_s),
\end{equation}
if $p\geq 2$,  and
\begin{equation}
\label{eqp}\| P(t,s)\varphi  - m_s \varphi \|_{L^p(\R^d, \mu_t)}
\leq M_pe^{\theta_p(t-s) }\|\varphi \|_{L^p(\R^d, \mu_s)}, \quad
t>s, \; \varphi\in L^p(\R^d, \mu_s),
\end{equation}
if $1<p<2$, with $\theta_p = 2\ell_2(1-1/p)$.
\end{corollary}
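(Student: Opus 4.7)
The plan is to establish the corresponding exponential decay bound for $\T(t)(I-\Pi)$ in $L^p(\U\times\R^d,\mu)$ and then transfer it to $P(t,s)$ by invoking the equivalence in Theorem~\ref{Pr:EquivExp}. All the hard analytic work has already been done in Proposition~\ref{thm-2.7}, Theorem~\ref{gamma=omega} and Corollary~\ref{Cor:spectralgap}; what remains is to assemble the pieces and handle the two ranges of $p$ separately.

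For $p\ge 2$, first I would observe that since the diffusion coefficients are bounded we have $\zeta\equiv 0$ is not needed, but the quantity $\min\{p-1,1\}$ in the definition of $\ell_p$ equals $1$, so $\ell_p=\ell_2<0$ for every $p\ge 2$. Proposition~\ref{thm-2.7} then gives $\|\,|\nabla_x\T(t)f|\,\|_p\le C_3 e^{\ell_2}\|f\|_p$ for $t\ge 1$ (and the same bound with $e^{\ell_2 t}$ for $t\ge 1$ follows by iterating, since $\ell_2<0$), hence $\ell_2\in B_p$. By Theorem~\ref{gamma=omega} applied with the bounded-diffusion hypothesis, $B_p\subset A_p$, so there is $\widetilde M_p>0$ with $\|\T(t)(I-\Pi)f\|_p\le \widetilde M_p e^{\ell_2 t}\|f\|_p$ for all $t\ge 0$ and $f\in L^p(\U\times\R^d,\mu)$. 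Theorem~\ref{Pr:EquivExp} (statement (a)$\Leftrightarrow$(b)) then yields \eqref{eq2}.

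For $1<p<2$ I would combine the $L^2$ bound just obtained with the trivial contraction estimate $\|\T(t)(I-\Pi)f\|_1\le 2\|f\|_1$ (which holds since $\T(t)$ is a contraction and $\Pi$ has norm $1$ in $L^1$), and apply the Riesz--Thorin interpolation theorem to the linear operator $\T(t)(I-\Pi)$ at the exponent $\theta$ determined by $1/p=(1-\theta)/1+\theta/2$, i.e.\ $\theta=2(1-1/p)$. This produces
\begin{equation*}
\|\T(t)(I-\Pi)f\|_p\le 2^{1-\theta}\widetilde M_2^{\theta}\,e^{2\ell_2(1-1/p)t}\|f\|_p=M_p e^{\theta_p t}\|f\|_p,\qquad t\ge 0.
\end{equation*}
A second application of Theorem~\ref{Pr:EquivExp} then gives \eqref{eqp}.

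No genuine obstacle is present; the only points requiring a little care are (i) verifying that the operator norm bound (and not merely the spectral bound) extracted from the proof of Corollary~\ref{Cor:spectralgap} is indeed available at both endpoints $p=1$ and $p=2$, so that Riesz--Thorin applies, and (ii) the observation that for $p\ge 2$ the boundedness of the diffusion coefficients forces $\ell_p$ to collapse onto $\ell_2$, which is what makes the exponent in \eqref{eq2} independent of $p$ rather than degrading.
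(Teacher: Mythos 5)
Your proposal is correct and follows essentially the paper's own route: for $p\ge 2$ the observation that $\ell_p=\ell_2$ gives $\ell_2\in B_p$ via \eqref{grad-p-semigruppone>1}, Theorem \ref{gamma=omega} yields $B_p\subset A_p$, and Theorem \ref{Pr:EquivExp} transfers the bound to $P(t,s)$; for $1<p<2$ both arguments rest on Riesz--Thorin interpolation between the trivial $L^1$ bound and the $L^2$ bound (the paper interpolates $P(t,s)-m_s$ directly, you interpolate $\T(t)(I-\Pi)$ and transfer afterwards, which is the same computation the paper itself performs in Corollary \ref{Cor:spectralgap}).
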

\begin{proof}
By estimate \eqref{grad-p-semigruppone>1}, $\ell_2\in B_p$ for
$p\geq 2$, and  \eqref{eq2} follows applying Theorem
\ref{Pr:EquivExp}  and Theorem \ref{gamma=omega}. For $1<p<2$,
the estimate $\|P(t,s)\varphi  - m_s \varphi \|_{L^p(\R^d, \mu_t)} \leq
M_pe^{2\ell_2(1-1/p)(t-s) }\|\varphi \|_{L^p(\R^d, \mu_s)}$
 follows  interpolating between $L^1$ and $L^2$, since $ \| P(t,s)\varphi  - m_s \varphi \|_{L^1(\R^d, \mu_t)} \leq 2 \|\varphi \|_{L^1(\R^d, \mu_s)}$ for
$t>s$ and $\varphi \in L^1(\R^d, \mu_s)$. \end{proof}

To get a better decay estimate in $L^p$ spaces with $p<2$ we need
more refined arguments. An important tool is a logarithmic Sobolev
estimate, that will be proved in the next subsection.

\vspace{2mm}

We end this subsection with a remark. Spectral gaps of elliptic
differential operators with unbounded coefficients and asymptotic
behavior of the associated semigroups are usually proved through
Poincar\'e inequalities. We may
prove a Poincar\'e type inequality in our nonautonomous
setting, and precisely
\begin{proposition}
Let  Hypotheses \ref{hyp1} and
\ref{hyp2} hold. Assume that  the diffusion coefficients are bounded and that $\ell_2<0$. Then
\begin{equation}
\label{Poincare'}
\int_{\U \times \R^{d}} |f-\Pi f|^2 d \mu \leq \frac{
\Lambda}{|\ell_2|}\int_{\U \times \R^{d}}|\nabla_x f|^2 d\mu ,\quad
f\in W^{0,1}_{2}(\U \times \R^{d}, \mu),
\end{equation}
where $\Lambda$ is defined in \eqref{Lambda}.
\end{proposition}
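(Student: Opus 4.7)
The plan is to rely on the integration-by-parts formula of Corollary \ref{cor:2}(c) together with the exponential gradient decay \eqref{grad-grad} and the $L^2$-ergodicity of $\T(t)(I-\Pi)$ from Theorem \ref{decadimento}. The key tool is the standard computation $\frac{d}{dt}\|\T(t)(I-\Pi)f\|_2^2=-2\int\langle Q\nabla_x\T(t)f,\nabla_x\T(t)f\rangle d\mu$, integrated between $t=0$ and $t=+\infty$.

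I would first check that $\Pi$ commutes with $\T(t)$: since each $P(s,s-t)$ is Markov and $\{\mu_s\}_s$ is the evolution system of measures for $P(t,s)$,
\begin{equation*}
\Pi\T(t)f(s,x)=\int_{\R^d}P(s,s-t)f(s-t,\cdot)(y)\mu_s(dy)=m_{s-t}f(s-t,\cdot)=\T(t)\Pi f(s,x),
\end{equation*}
so $\Pi$ also commutes with $R(\lambda,G_2)$ and preserves $D(G_2)$. For $f\in D(G_2)$, set $v(t):=\T(t)(I-\Pi)f$; then $v\in C^1([0,+\infty);L^2)$, $v(t)\in D(G_2)$, $v'(t)=G_2v(t)$, and $\nabla_xv(t)=\nabla_x\T(t)f$ because $\Pi f$ is independent of $x$. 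Corollary \ref{cor:2}(c) (applicable with $p=2$ since the diffusion coefficients are bounded) yields the carré-du-champ identity
\begin{equation*}
\frac{d}{dt}\|v(t)\|_2^2=2\langle v(t),G_2v(t)\rangle=-2\int_{\U\times\R^d}\langle Q\nabla_x\T(t)f,\nabla_x\T(t)f\rangle d\mu.
\end{equation*}
Integrating over $[0,+\infty)$, the boundary term at $+\infty$ vanishing by Theorem \ref{decadimento}, I obtain
\begin{equation*}
\|(I-\Pi)f\|_2^2=2\int_0^{+\infty}\int_{\U\times\R^d}\langle Q\nabla_x\T(t)f,\nabla_x\T(t)f\rangle d\mu\,dt.
\end{equation*}
Bounding $\langle Q\xi,\xi\rangle\le\Lambda|\xi|^2$ and invoking \eqref{grad-grad} with $p=2$ (legitimate because $D(G_2)\subset W^{0,1}_2$ by Corollary \ref{cor:2}(a), and $\ell_2<0$ gives $\|\,|\nabla_x\T(t)f|\,\|_2\le e^{\ell_2 t}\|\,|\nabla_x f|\,\|_2$) then produces \eqref{Poincare'} for every $f\in D(G_2)$ after a trivial integration in $t$.

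The last step is the extension from $D(G_2)$ to the whole of $W^{0,1}_2(\U\times\R^d,\mu)$. Both sides of \eqref{Poincare'} are continuous under convergence in $W^{0,1}_2(\mu)$ (since $\Pi$ is an $L^2$-contraction, $f_n\to f$ in $L^2$ forces $\Pi f_n\to\Pi f$ in $L^2$), and $D(G_2)$ contains $C^\infty_c(\U\times\R^d)$ (see the comments after Proposition \ref{prop-2.9}), which is dense in $W^{0,1}_2(\mu)$ by a standard $x$-truncation-plus-mollification argument using the continuity and strict positivity of the density $\rho$ of $\mu$. The point that requires the most care is precisely this density statement, together with the fact that $I-\Pi$ preserves $D(G_2)$; once these are granted, the remainder of the argument is a direct assembly of machinery already developed.
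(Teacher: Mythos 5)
Your proposal is correct and follows essentially the same route as the paper: the carr\'e du champ identity from Corollary \ref{cor:2}(c) (the paper invokes it via the equality case of \eqref{midnight}), the vanishing of $\|\T(t)(I-\Pi)f\|_2$ at infinity from Theorem \ref{decadimento}, the bound $\langle Q\xi,\xi\rangle\le\Lambda|\xi|^2$ combined with \eqref{grad-grad}, and a final density argument from $D(G_2)$ to $W^{0,1}_2(\U\times\R^d,\mu)$. The only difference is cosmetic (you phrase the identity via $v(t)=\T(t)(I-\Pi)f$ and are slightly more explicit about the density step, which the paper asserts without detail).
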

\begin{proof} Let $f\in (I-\Pi)(D(G_2))$. By Corollary \ref{cor:2}(c), inequality \eqref{midnight} is in fact an equality. Letting $t\to \infty$ in \eqref{midnight} and recalling that  $\lim_{t\to \infty}\T(t)f =0$ by Theorem \ref{decadimento}, we obtain
$$  \| f \|_{2}^{2}  =  2 \int_{0}^{\infty} \int _{\U\times \R^d}  \langle
Q\nabla _{x}\T(s)f , \nabla_x\T(s)f \rangle  d\mu\, ds $$
and therefore, using \eqref{grad-grad},
\begin{align*}
  \| f \|_{2}^{2} & \leq  2\Lambda \int_{0}^{\infty} \int _{\U\times \R^d}
|\nabla _{x}\T(s)f |^2  d\mu\, ds
 \leq  2\Lambda \int_{0}^{\infty} \int _{\U\times \R^d}  e^{2\ell_2 s} |\nabla _{x} f |^2  d\mu\, ds
 \\
 &
=  \frac{ \Lambda}{|\ell_2|}\int_{\U \times \R^{d}}|\nabla_x f|^2
d\mu,
\end{align*}
so that \eqref{Poincare'} holds for every $f\in D(G_2)$. Since
$ D(G_2)$ is dense in $ W^{0,1}_{2}(\U \times \R^{d}, \mu)$, \eqref{Poincare'} holds for every $f\in  W^{0,1}_{2}(\U \times \R^{d}, \mu)$.
\end{proof}

Once the Poincar\'e inequality \eqref{Poincare'} is established,
arguing as in \cite[Prop.~6.4]{DPL} we   obtain
\begin{eqnarray*}
\| \T(t)(f- \Pi f)\|_2 \leq e^{ \eta_0\ell_2 t/\Lambda} \| f -\Pi
f\|_2, \quad t>0,
\end{eqnarray*}
so that  $\omega_2\leq  \eta_0\ell_2 /\Lambda$. Since $\eta_0 \leq
\Lambda$, the estimate  $\omega_2 \leq \ell_2$ obtained through
Theorem \ref{gamma=omega} is sharper. Such estimates coincide only
if $\eta_0 = \Lambda$, that is if the diffusion matrix $Q$ is a
scalar multiple of the identity.

\subsection{A log-Sobolev type inequality}

Throughout the whole subsection we assume that Hypotheses \ref{hyp1}
and  \ref{hyp2} hold,  that $r_0<0$, and that the diffusion coefficients
 are independent of $x$.
 We recall that $r_0=\sup_{(t,x)\in\U\times\R^d}r(t,x)$ where $r$ is the function in
Hypothesis \ref{hyp2}(ii). This is an important restriction, due to the fact that in the proof (which is an adaptation to the nonautonomous case of the method of \cite[Thm.~6.2.42]{DS}) we use the estimate
  \begin{equation}
|\nabla_x \T(t)f(s,x)| \leq e^{r_0(t-s)}\T(t)|\nabla f|(x),
\qquad\;\, t>0,\;\,(s,x)\in\R^{1+d},
 \label{grad-punt-LS}
\end{equation}
obtained from Theorem \ref{Th:gradP(t,s)puntuale}(iii), which is not
obvious (and, in general, not true) if the diffusion coefficients
are not independent of $x$. We refer the reader to \cite{Wang} for a
discussion about the validity of an estimate similar to
\eqref{grad-punt-LS} in the autonomous case.

\begin{lemma}
\label{conv T media}
For any $f \in D(G_{\infty})$ such that $f \geq \delta$ for some $\delta
>0$,
we have
\begin{eqnarray*}
\lim_{t \to\infty} \int_{\U \times \R^{d }}\T (t)f
\log (\T (t)f)d\mu = \frac{1}{T} \int_0^T \Pi f \log (\Pi f)ds.
\end{eqnarray*}
\end{lemma}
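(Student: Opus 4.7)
My plan is to reduce the claim to the $L^1$ convergence $\T(t)(f-\Pi f)\to 0$ already proved in Theorem \ref{decadimento}, by observing that on the set $[\delta,\|f\|_\infty]$ the map $\phi(r):=r\log r$ is Lipschitz. The first step is to localize the range of $\T(t)f$ and $\T(t)\Pi f$. Since $P(s,s-t)$ is a Markov operator, it preserves constants and maps $[\delta,\|f\|_\infty]$-valued functions into $[\delta,\|f\|_\infty]$-valued functions; therefore $\delta\le\T(t)f\le\|f\|_\infty$. Moreover $\Pi f(s,x)=m_sf(s,\cdot)$ lies in $[\delta,\|f\|_\infty]$ as well (it is an average of $f$ with respect to the probability measure $\mu_s$), so the same bounds hold for $\T(t)\Pi f$. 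On this compact interval the function $\phi$ is Lipschitz with some constant $L=L(\delta,\|f\|_\infty)$.

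The second step is to rewrite $\T(t)\Pi f$ explicitly. Because $\Pi f(s-t,\cdot)$ is constant in $x$ and $P(s,s-t)$ fixes constants,
\begin{equation*}
\T(t)\Pi f(s,x)=P(s,s-t)(\Pi f)(s-t,\cdot)(x)=\Pi f(s-t),
\end{equation*}
a function of $s$ only. Consequently, using the fact that $\mu_s$ is a probability measure and then the $T$-periodicity of $s\mapsto\Pi f(s)$,
\begin{equation*}
\int_{\U\times\R^d}\phi(\T(t)\Pi f)\,d\mu=\frac{1}{T}\int_0^T\phi(\Pi f(s-t))\,ds=\frac{1}{T}\int_0^T\Pi f(s)\log(\Pi f(s))\,ds,
\end{equation*}
which is exactly the right-hand side of the target identity, independently of $t$.

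The third and last step is to estimate the difference. By the Lipschitz bound on $\phi$,
\begin{equation*}
\bigg|\int_{\U\times\R^d}\phi(\T(t)f)\,d\mu-\int_{\U\times\R^d}\phi(\T(t)\Pi f)\,d\mu\bigg|\le L\int_{\U\times\R^d}|\T(t)f-\T(t)\Pi f|\,d\mu=L\,\|\T(t)(f-\Pi f)\|_1.
\end{equation*}
Since the diffusion coefficients are independent of $x$ and hence bounded on $\U\times\R^d$, Proposition \ref{DecadimentoGradiente} applies and Theorem \ref{decadimento} gives $\|\T(t)(f-\Pi f)\|_1\to 0$ as $t\to\infty$. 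Combining this with the previous step yields the claim.

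There is no real obstacle: once the uniform positivity $\T(t)f\ge\delta$ and the uniform upper bound are in hand, the nonlinearity $x\log x$ becomes harmless, and the conclusion follows directly from the already established asymptotic behavior in $L^1(\U\times\R^d,\mu)$.
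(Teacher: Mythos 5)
Your proof is correct and follows essentially the same route as the paper's: both identify $\frac{1}{T}\int_0^T \Pi f\log(\Pi f)\,ds$ with $\int_{\U\times\R^d}\T(t)\Pi f\log(\T(t)\Pi f)\,d\mu$ and then control the difference via the regularity of $y\mapsto y\log y$ on a bounded range together with the strong stability from Theorem \ref{decadimento}. The only (harmless) variations are that you exploit the lower bound $\delta$ to get a Lipschitz estimate and work in $L^1$, whereas the paper uses H\"older continuity of $y\log y$ on bounded sets and the $L^2$ norm.
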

\begin{proof} By the   definition of $\T (t)$ we have
\begin{align*}
\frac{1}{T} \int_0^T \Pi f \log (\Pi f) ds & = \int_{\U \times \R^d } \Pi f(\cdot-t)\log (\Pi f(\cdot-t)) d \mu\\
& = \int_{\U \times \R^{d}} \T (t) \Pi f \log (\T (t)\Pi f) d \mu.
\end{align*}
On the other hand, using H\"older inequality and recalling that the
function $y \mapsto y\log (y)$ is H\"older continuous on bounded
sets, we can determine $C>0$ and $\alpha \in (0,1)$ such that
\begin{align*}
&\left| \int_{\U \times \R^d } \left(\T (t)f \log (\T (t)f)  -
\T (t) \Pi f \log(\T (t) \Pi f) \right) d \mu  \right|\\
&\leq
C \int_{\U \times \R^d } |\T (t)(f- \Pi f) |^{\alpha} d \mu
\leq   C  \|\T (t)(f -  \Pi f)\|_2^{\alpha},
\end{align*}
for any $t>0$, and Theorem \ref{decadimento} yields the assertion.
\end{proof}

We recall that $\Lambda$ is the supremum of the eigenvalues of the matrices $Q(s)$.

\begin{theorem}
\label{logsob thm dcomp delta} For any $p \in [1,\infty)$ and any
$f \in D(G_{\infty})$ with positive infimum we
have
 \begin{equation}
  \label{logsob ineq}
\int_{\U \times \R^d } f^p \log (f^p) d \mu  \leq
\frac{1}{T}\int_0^T \Pi f^p \log (\Pi f^p)ds + \frac{p^2 \Lambda}{2
|r_0|} \int_{\U \times \R^d } f^{p-2} |\nabla_x f|^2 d \mu.
\end{equation}
\end{theorem}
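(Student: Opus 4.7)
The plan is to adapt to the evolution semigroup the classical semigroup approach to log-Sobolev inequalities (Bakry--Émery/Deuschel--Stroock style), interpolating the entropy functional along the orbit of $\T(t)$ between its initial value and the limit identified in Lemma \ref{conv T media}. Set $u:=f^p$. Since $f\in D(G_\infty)$ with $f\ge\delta>0$ and the diffusion coefficients are bounded, the identity $\G(f^p)=pf^{p-1}\G f+p(p-1)f^{p-2}\langle Q\nabla_xf,\nabla_xf\rangle$ together with the embedding $D(G_\infty)\hookrightarrow C^{0,1}_b(\U\times\R^d)$ (Proposition \ref{thm-2.7}) shows that $u\in D(G_\infty)$ with positive infimum $\delta^p$. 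Define
\begin{eqnarray*}
\phi(t):=\int_{\U\times\R^d}\T(t)u\,\log(\T(t)u)\,d\mu,\qquad t\ge 0.
\end{eqnarray*}
Then $\phi(0)=\int f^p\log(f^p)\,d\mu$, and by Lemma \ref{conv T media} (applied to $u$, which satisfies its hypotheses) $\lim_{t\to\infty}\phi(t)=\frac{1}{T}\int_0^T\Pi f^p\log(\Pi f^p)\,ds$. The desired inequality will follow from $\phi(0)-\lim_{t\to\infty}\phi(t)=-\int_0^\infty\phi'(t)\,dt$ once we control $\phi'(t)$ from below.

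Next, I would compute $\phi'(t)$ via the carré du champ identity. By the Markov property $v(t):=\T(t)u$ is bounded below by $\delta^p$ and belongs to $D(G_\infty)$ (Proposition \ref{prop-2.9}), and so does $v\log v$ since it is a smooth function of $v$ with bounded derivatives on $[\delta^p,\|u\|_\infty]$. A direct computation gives
\begin{eqnarray*}
\G(v\log v)=(1+\log v)\,\G v+\frac{\langle Q\nabla_x v,\nabla_x v\rangle}{v}.
\end{eqnarray*}
Since $v\log v\in D(G_\infty)\subset D(G_1)$, integration against $\mu$ kills the left-hand side and the term $\int \G v\,d\mu$, yielding $\int(\log v)\G v\,d\mu=-\int v^{-1}|Q^{1/2}\nabla_x v|^2\,d\mu$. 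Differentiating $\phi$ and using $\int G\T(t)u\,d\mu=0$ therefore gives
\begin{eqnarray*}
\phi'(t)=\int(1+\log \T(t)u)\,G\T(t)u\,d\mu=-\int\frac{|Q^{1/2}\nabla_x\T(t)u|^2}{\T(t)u}\,d\mu.
\end{eqnarray*}

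The heart of the argument is now the pointwise gradient bound \eqref{grad-punt-LS} combined with Cauchy--Schwarz against the probability kernel $p_{s,s-t,x}$ defining $\T(t)$. Writing $|\nabla u|=u^{1/2}\cdot(|\nabla u|/u^{1/2})$ and applying Jensen's inequality to the Markov operator $\T(t)$, one obtains $(\T(t)|\nabla u|)^2\le \T(t)u\cdot\T(t)(|\nabla u|^2/u)$; combined with \eqref{grad-punt-LS} and $|Q^{1/2}\xi|^2\le\Lambda|\xi|^2$ this yields
\begin{eqnarray*}
\frac{|Q^{1/2}\nabla_x\T(t)u|^2}{\T(t)u}\le \Lambda e^{2r_0 t}\,\T(t)\!\left(\frac{|\nabla u|^2}{u}\right).
\end{eqnarray*}
Integrating against $\mu$ (which is invariant for $\T(t)$) and using $|\nabla u|^2/u=p^2f^{p-2}|\nabla_x f|^2$ gives $\phi'(t)\ge -\Lambda e^{2r_0 t}\,p^2\int f^{p-2}|\nabla_x f|^2\,d\mu$. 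Integrating in $t\in[0,\infty)$ and using $\int_0^\infty e^{2r_0 t}\,dt=1/(2|r_0|)$ (recall $r_0<0$) gives
\begin{eqnarray*}
\phi(0)-\lim_{t\to\infty}\phi(t)=-\int_0^\infty\phi'(t)\,dt\le \frac{p^2\Lambda}{2|r_0|}\int f^{p-2}|\nabla_x f|^2\,d\mu,
\end{eqnarray*}
which is \eqref{logsob ineq}.

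The main technical obstacle is the justification of $\int\G(v\log v)\,d\mu=0$ for $v=\T(t)u$: one must verify $v\log v\in D(G_1)$, which relies on the positive lower bound $v\ge\delta^p$ (Markov property), the boundedness of $\nabla_x v$ from Proposition \ref{thm-2.7}, and boundedness of the diffusion coefficients to control the quadratic term in $\G(v\log v)$. The rest consists of routine chain rule calculations, Jensen's inequality for Markov kernels, and the exponential integrability provided by $r_0<0$.
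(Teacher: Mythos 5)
Your proposal is correct and follows essentially the same route as the paper: both differentiate the entropy $t\mapsto\int\T(t)u\log(\T(t)u)\,d\mu$, kill $\int\G(v\log v)\,d\mu$ by membership in $D(G_\infty)\subset D(G_1)$, combine the pointwise gradient bound \eqref{grad-punt-LS} with Cauchy--Schwarz against the kernel to get $\phi'(t)\ge-\Lambda e^{2r_0t}\int|\nabla_xu|^2/u\,d\mu$, and integrate in $t$ using Lemma \ref{conv T media}. The only (immaterial) difference is that you substitute $u=f^p$ at the outset, whereas the paper proves the case $p=1$ first and then applies it to $f^p$.
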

\begin{proof} Let $f \in D(G_{\infty})$ satisfy $f\geq \delta$ for some $\delta >0$.
We first prove \eqref{logsob ineq} with $p=1$. By Proposition \ref{prop-2.9}, $\T (t)f \in D(G_{\infty})$ for any $t >0$. Moreover,
$\T (t)f \geq\T (t) \delta \equiv \delta$ for any $t \geq 0$.

Let us consider  the function $F: [0,\infty) \to \R$ defined by
\begin{eqnarray*}
F(t) =\int_{\U \times \R^d }\T (t)f \log (\T (t)f) d \mu , \qquad
t\ge 0.
\end{eqnarray*}
By Lemma \ref{conv T media} we have
\begin{eqnarray*}
\lim_{t \to\infty} F(t) = \frac{1}{T} \int_0^T \Pi f \log (\Pi f) ds.
\end{eqnarray*}
We want to show that $F$ is differentiable, and to compute $F'(t)$.
First of all we remark that, since $\T(t)f\in D(G_{\infty})$ and
$\T(t)f\geq \delta$, the function $\log (\T(t)f)$ is in
$D(G_{\infty})$ for any $t\ge 0$. Indeed, it belongs to $C_b(\U
\times \R^d)$ $\cap$ $W^{1,2}_{q}(\U \times B(0,R))$  for every $q$
and $R$, and
\begin{eqnarray*}
\G ( \log (\T(t)f) ) = \frac{1}{\T(t)f} \G \T(t)f - \frac{1}{(\T(t)f)^2} \langle Q \nabla_x
\T(t)f, \nabla_x \T(t)f \rangle
\end{eqnarray*}
 is continuous and bounded. Taking Proposition \ref{thm-2.7} into account,
 it follows that $\T(t)f\log(\T(t)f) \in  D(G_{\infty})$
 and
\begin{align}
 \label{two}
  \G [\T(t)f\log(\T(t)f)] =& \; \T(t) f \bigg( \frac{1}{\T(t)f} \G \T(t)f - \frac{1}{(\T(t)f)^2} \langle Q \nabla_x
\T(t)f, \nabla_x \T(t)f \rangle\bigg)\notag\\
&+(\G \T(t) f)\log(\T(t) f)    + 2 \left\langle Q \nabla_x \T(t) f,
\frac{\nabla_x \T(t) f}{\T(t) f}\right \rangle\notag\\
 =& \; {\mathcal G}\T(t)f\hskip -1pt+\hskip -1pt\frac{1}{(\T(t)f)^2}\langle
 Q\nabla_x\T(t)f,\nabla_x\T(t)f\rangle\hskip -1pt +\hskip -1pt(\G\T(t)f)\log(\T(t)f).
\end{align}
A straightforward computation shows that
\begin{eqnarray*}
\frac{d}{dt}\left (\T (t)f\log (\T (t)f)\right )= \G \T (t)f \log(\T
(t)f) + \G \T (t) f ,\qquad\;\,t\ge 0.
\end{eqnarray*}
Using \eqref{two} we get
\begin{eqnarray*}
 \frac{d}{dt}\left (\T (t)f\log (\T (t)f)\right ) = \G [\T (t)f \log (\T (t)f) ]
- \frac{1}{\T (t)f} \langle Q \nabla_x\T (t)f, \nabla_x\T (t)f \rangle ,
\end{eqnarray*}
which is continuous and bounded. Therefore, $F$ is differentiable
and, since  the integral of $ \G [\T (t)f \log (\T (t)f)]$
vanishes, we have
\begin{eqnarray*}
 F'(t) = - \int_{\U \times \R^d } \frac{1}{\T (t)f} \langle Q \nabla_x\T (t)f, \nabla_x\T
(t)f \rangle d \mu  , \qquad t \geq 0.
\end{eqnarray*}
Let us estimate $ F'(t)$. The pointwise estimate \eqref{grad-punt-LS} implies that
\begin{eqnarray*}
|\nabla_x\T(t)f(s,x)|^2 \leq e^{2r_0 t}(\T(t)|\nabla_x
f|(s,x))^2,\qquad\;\,t>0,\;\,(s,x)\in\R^{1+d},
\end{eqnarray*}
so that
\begin{eqnarray*}
 \int_{\U \times \R^d } \frac{1}{\T (t)f}
\langle Q \nabla_x\T (t)f, \nabla_x\T (t)f \rangle d \mu \leq  e^{2r_0 t}  \int_{\U \times \R^d } \frac{\Lambda}{\T (t)f} \left(\T(t) |\nabla_x f|\right)^2
d \mu , \qquad t\ge 0.
\end{eqnarray*}
Moreover, using the H\"older inequality in the representation
formula
\begin{eqnarray*}
\T (t)f(s,x) =\int_{\R^d}
f(s-t,y)p_{s,s-t,x}(dy),\qquad\;\,t>0,\;\,(s,x)\in \U \times \R^d,
\end{eqnarray*}
(see Theorem \ref{Th:P(t,s)}), we get
\begin{eqnarray*}
(\T (t)|\nabla_x f|)^2 = \left(\T (t) \left( \sqrt{f}
\frac{|\nabla_x f|}{\sqrt{f}} \right) \right)^2 \leq (\T (t)f)
\left(\T (t) \left( \frac{|\nabla_x f|^2}{f} \right) \right).
\end{eqnarray*}
Therefore, for each $t\ge 0$ we have
\begin{align*}
 \int_{\U \times \R^d } \frac{1}{\T (t)f}
\langle Q \nabla_x\T (t)f, \nabla_x\T (t)f \rangle d \mu
\leq & \; \Lambda  e^{2r_0 t}  \int_{\U \times \R^d }\T (t) \left(
\frac{|\nabla_x f|^2}{f} \right) d \mu\\[1mm]
= & \; \Lambda  e^{2r_0 t}  \int_{\U \times
\R^d } \frac{|\nabla_x f |^2}{f} d \mu,
\end{align*}
that is
\begin{eqnarray*}
F'(t) \geq  - \Lambda  e^{2r_0 t}  \int_{\U \times
\R^d } \frac{|\nabla_x f |^2}{f} d \mu,\qquad\;\,t\ge 0.
\end{eqnarray*}
Integrating with respect to $t$ in $(0,\infty)$ we get
\begin{eqnarray*}
  \frac{1}{T} \int_0^T \Pi f \log (\Pi f) ds - F(0) =   \int_0^{\infty} F'(t) dt
\geq - \frac{\Lambda}{2 |r_0|} \int_{\U \times \R^d }
\frac{|\nabla_x f|^2}{f} d \mu ,
\end{eqnarray*}
that is formula \eqref{logsob ineq} with $p=1$.

Let now fix $p \in (1,\infty)$. We have
\begin{eqnarray*}
\G ( f^p) = p f^{p-1}\G f + p(p-1)f^{p-2} \langle Q \nabla_x f,
\nabla_x f \rangle,
\end{eqnarray*}
where $f^p \geq \delta^p
>0$, and then, again by Proposition \ref{thm-2.7}, $f^p \in
D(G_{\infty})$. The first part of the proof applied to the function $f^p$
yields the conclusion.
\end{proof}

\begin{proposition}
 \label{dsharp any}
For every $p\in (1,\infty)$ and for every $u \in D(G_{\infty})$ we
have
\begin{equation}
 \label{logsob ineq mod}
\int_{\U \times \R^d } |u|^p \log (|u|^p) d \mu  \leq
\frac{1}{T}\int_0^T \Pi |u|^p \log (\Pi |u|^p)ds + \frac{p^2
\Lambda}{2 |r_0|} \int_{\U \times \R^d } |u|^{p-2} |\nabla_x u|^2 d
\mu .
\end{equation}
In addition, if $u \in D(G_{\infty})$ satisfies
\begin{eqnarray*}
\int_{\U \times \R^d } \frac{|\nabla_x u|^2}{|u|} d \mu  < \infty,
\end{eqnarray*}
then \eqref{logsob ineq mod} holds also for $p=1$.
\end{proposition}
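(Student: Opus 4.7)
The plan is to reduce to Theorem \ref{logsob thm dcomp delta} by approximating $|u|$ with the sequence $u_n := (u^2+1/n)^{1/2}$, which has positive infimum $1/\sqrt{n}$ and converges pointwise to $|u|$ as $n\to\infty$. First I would check that $u_n \in D(G_{\infty})$. Since $u \in D(G_{\infty}) \subset C^{0,1}_b(\U \times \R^d)$ by Proposition \ref{thm-2.7}, $u_n$ is continuous and bounded with bounded space gradient $\nabla_x u_n = (u/u_n)\nabla_x u$. A direct computation, using that the diffusion coefficients are independent of $x$ (hence $Q$ is bounded, being $T$-periodic and $\alpha/2$-H\"older continuous in $s$), yields
\begin{eqnarray*}
\G u_n = \frac{u}{u_n}\,\G u + \frac{1}{n u_n^3}\langle Q \nabla_x u, \nabla_x u\rangle,
\end{eqnarray*}
which is continuous and bounded, so $u_n \in D(G_{\infty})$ with $u_n \geq 1/\sqrt{n}$.

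Applying Theorem \ref{logsob thm dcomp delta} to $u_n$ (with the same exponent $p$) gives
\begin{eqnarray*}
\int_{\U \times \R^d} u_n^p \log u_n^p\, d\mu \leq \frac{1}{T}\int_0^T \Pi u_n^p \log \Pi u_n^p\, ds + \frac{p^2 \Lambda}{2|r_0|}\int_{\U \times \R^d} u_n^{p-4}\, u^2\, |\nabla_x u|^2 d\mu,
\end{eqnarray*}
where I used the identity $u_n^{p-2}|\nabla_x u_n|^2 = u_n^{p-4}u^2|\nabla_x u|^2$. The next step is to pass to the limit $n\to\infty$ on both sides. Since $u_n$ is uniformly bounded by $(\|u\|_\infty^2+1)^{1/2}$ and $y\mapsto y^p\log y^p$ is bounded on bounded subsets of $[0,\infty)$ (with the convention $0\log 0 = 0$), dominated convergence applied twice yields convergence of the left-hand side to $\int |u|^p \log |u|^p d\mu$ and of the first term on the right-hand side to $\frac{1}{T}\int_0^T \Pi |u|^p \log \Pi |u|^p ds$.

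For the gradient term I would split into two cases. If $p \geq 2$, then $u_n^{p-4}u^2 \leq u_n^{p-2} \leq \max\{1,(\|u\|_\infty^2+1)^{(p-2)/2}\}$, so $u_n^{p-4}u^2|\nabla_x u|^2$ is uniformly dominated by a multiple of $|\nabla_x u|^2 \in L^1(\U \times \R^d,\mu)$ (since $\nabla_x u$ is bounded and $\mu$ is finite), and dominated convergence applies. If $1 < p < 2$, then $p-4 < 0$ and $u_n$ is monotone decreasing in $n$, so $u_n^{p-4}$, and hence $u_n^{p-4}u^2|\nabla_x u|^2$, is monotone increasing; monotone convergence then gives the limit. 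In both sub-cases the pointwise limit of $u_n^{p-4}u^2|\nabla_x u|^2$ is $|u|^{p-2}|\nabla_x u|^2$, where the convention $|u|^{p-2}|\nabla_x u|^2 = 0$ on $\{u=0\}$ is justified because $\nabla_x u = 0$ Lebesgue-a.e.\ (hence $\mu$-a.e.) on $\{u=0\}$.

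For the case $p=1$ under the extra integrability hypothesis, the same scheme works with Theorem \ref{logsob thm dcomp delta} applied with exponent $1$: the gradient term becomes $\int u_n^{-3}u^2|\nabla_x u|^2 d\mu$, and $u_n^{-3}u^2$ increases monotonically to $|u|^{-1}\chi_{\{u\neq 0\}}$, so monotone convergence yields the limit $\int|\nabla_x u|^2/|u|\,d\mu$, which is finite by the extra assumption. The main obstacle is controlling the gradient term for $1 < p < 2$, where no uniform-in-$n$ domination is available; it is precisely the monotonicity of $u_n^{p-4}$ in $n$ that rescues the argument via monotone convergence.
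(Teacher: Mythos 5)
Your proof is correct and follows essentially the same route as the paper's: approximate by $u_n=\sqrt{u^2+1/n}$, verify $u_n\in D(G_{\infty})$ with $u_n\ge 1/\sqrt{n}$, apply Theorem \ref{logsob thm dcomp delta} to $u_n$, and pass to the limit. The only (minor) difference is in the gradient term for $1<p<2$ and $p=1$: you use monotone convergence, exploiting that $u_n^{p-4}$ increases in $n$, whereas the paper uses dominated convergence with dominating function $|u|^{p-2}|\nabla_x u|^2\chi_{\{u\neq 0\}}$, whose integrability comes from Proposition \ref{lemma1 parte1}(c) for $p>1$ and from the extra hypothesis for $p=1$; both arguments are valid.
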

\begin{proof} Fix $u \in D(G_{\infty})$ and define the sequence
\begin{eqnarray*}
u_n := \sqrt{u^2 + \frac{1}{n}},\qquad\;\,n\in\N.
\end{eqnarray*}
A straightforward computation shows that
\begin{eqnarray*}
  \G u_n = \frac{u}{u_n} \G u + \frac{1}{n} \frac{\langle Q \nabla_x u, \nabla_x u \rangle}{f_n^3},
  \end{eqnarray*}
so that $u_n \in D(G_{\infty})$ and, moreover, $u_n \geq
\frac{1}{\sqrt{n}}$ for any $n \in \N$. Therefore, by Theorem
\ref{logsob thm dcomp delta}, we have
\begin{equation}
\label{approx n} \int_{\U \times \R^d } u_n^p \log(u_n^p)d\mu \leq
\frac{1}{T} \int_0^T \Pi u_n^p \log(\Pi u_n^p)ds + \frac{p^2
\Lambda}{2 |r_0|} \int_{\U \times \R^d }u_n^{p-2}|\nabla_x u_n|^2 d
\mu,
\end{equation}
for any $p \in [1,\infty)$ and for any $n \in \N$.

Since $0 < u_n^p \leq \|(u^2+1)^{p/2}\|_{\infty}$ for any
$n\in\N$ and the function $x\mapsto x\log x$ is
 continuous in $[0,\infty)$, the left-hand side of \eqref{approx n} converges to
 $ \int_{\U \times \R^d } |u|^p \log(|u|^p)d\mu$. Similarly, since
 $\Pi u_n^p \leq \|(u^2+1)^{p/2}\|_{\infty}$,
 by the dominated convergence theorem, $\Pi |u|^p
\log(\Pi |u|^p)\in L^1((0,T), ds)$, and
\begin{eqnarray*}
\lim_{n\to\infty}  \frac{1}{T} \int_0^T \Pi u_n^p \log(\Pi
u_n^p)ds =  \frac{1}{T} \int_0^T \Pi |u|^p \log(\Pi |u|^p)ds.
\end{eqnarray*}
Concerning the second integral in the right-hand side of
\eqref{approx n}, if $p\in [1,2)$ we have
\begin{eqnarray*}
0 < u_n^{p-2}|\nabla_x u_n|^2 \leq |u|^{p-2}|\nabla_x
u|^2\chi_{\{u\neq 0\}},\qquad\;\,\mbox{a.e. in } \U\times\R^d,
\end{eqnarray*}
and the right-hand side is in $L^1(\U \times \R^d, \mu)$ by
Proposition \ref{lemma1 parte1}(c) for $p>1$ and by assumption for
$p=1$; if $p\geq 2$ we have
\begin{eqnarray*}
0 < u_n^{p-2}|\nabla_x u_n|^2 \leq  u_{1}^{p-2}|\nabla_x u|^2 ,
\end{eqnarray*}
which is bounded by Proposition \ref{thm-2.7}. In any case, the dominated convergence theorem yields
\begin{eqnarray*}
\lim_{n\to\infty}  \int_{\U \times \R^d }u_n^{p-2}|\nabla_x u_n|^2
d \mu  =  \int_{\U \times \R^d }|u|^{p-2}|\nabla_x u|^2 d \mu
\end{eqnarray*}
and the statement follows.
\end{proof}

Proposition \ref{dsharp any} with $p=2$ would be enough to prove
next compactness Theorem \ref{Th:comp1}.  However, it is interesting
to extend logarithmic Sobolev inequalities as far as possible. To
extend estimate \eqref{logsob ineq mod} to all functions $u\in
D(G_p)$ for $p\geq 2$, we use the following lemma.

\begin{lemma}
\label{Le:nonlineare} For $p\geq 2$ and $u\in D(G_p)$, $|u|^p\in
D(G_1)$, and the mapping $u\mapsto |u|^p$ is continuous from
$D(G_p)$ into $D(G_1)$. Moreover, there exists $C_p>0$ such
that $\|
\,|u|^p\,\|_{D(G_1)} \leq C_p\|u\|_{D(G_p)}^p$, for every $u\in
D(G_p)$.
\end{lemma}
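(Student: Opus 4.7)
The plan is to establish the formula
\[
G_1(|u|^p) = p\,u|u|^{p-2}\, G_p u + p(p-1)\,|u|^{p-2}\langle Q\nabla_x u, \nabla_x u\rangle
\]
first on the core $D(G_{\infty})$, derive the a priori bound there, and then extend to all of $D(G_p)$ by using that $D(G_{\infty})$ is a core (Proposition~\ref{prop-2.9}) and that $G_1$ is closed. Recall that we are in the subsection where the diffusion coefficients depend only on $t$, hence $Q$ is bounded, so Corollary~\ref{cor:2}(c) applies and gives the equality version of \eqref{integration Hu} for every $q\ge 2$.

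First I would handle $u\in D(G_{\infty})$. For $p\ge 2$ the function $t\mapsto |t|^p$ is $C^1$ with continuous second derivative $p(p-1)|t|^{p-2}$, so a chain-rule computation gives $|u|^p\in \bigcap_{q<\infty} W^{1,2}_{q,\mathrm{loc}}\cap C_b$ and
\[
\mathcal{G}(|u|^p) = p\,u|u|^{p-2}\,\mathcal{G} u + p(p-1)\,|u|^{p-2}\langle Q\nabla_x u,\nabla_x u\rangle.
\]
Using that $Q$ is bounded and that $D(G_{\infty})\hookrightarrow C^{0,1}_b$ by Proposition~\ref{thm-2.7}, the right-hand side is in $C_b(\U\times\R^d)$. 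Hence $|u|^p\in D(G_{\infty})\subset D(G_1)$. The a priori estimate now follows from H\"older and the equality in \eqref{integration Hu}:
\[
\|G_1(|u|^p)\|_1\le p\|u\|_p^{p-1}\|G_p u\|_p + p(p-1)\cdot\frac{1}{p-1}\|u\|_p^{p-1}\|G_p u\|_p = 2p\,\|u\|_p^{p-1}\|G_p u\|_p,
\]
so that $\| |u|^p\|_{D(G_1)}\le \|u\|_p^p + 2p\|u\|_p^{p-1}\|G_pu\|_p \le C_p\|u\|_{D(G_p)}^p$.

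Next, given $u\in D(G_p)$, I would choose $u_n\in D(G_{\infty})$ with $u_n\to u$ in $D(G_p)$ and show $|u_n|^p$ is Cauchy in $D(G_1)$. Three convergences in $L^1(\mu)$ are needed: (a) $|u_n|^p\to |u|^p$, which follows from the Niemytskii continuity of $t\mapsto |t|^p$ from $L^p$ to $L^1$; (b) $u_n|u_n|^{p-2}G_pu_n\to u|u|^{p-2}G_pu$, which follows because the Niemytskii map $t\mapsto t|t|^{p-2}$ is continuous from $L^p$ to $L^{p/(p-1)}$, $G_p u_n\to G_pu$ in $L^p$, and a standard bilinear H\"older argument; and (c) $|u_n|^{p-2}\langle Q\nabla_x u_n,\nabla_x u_n\rangle \to |u|^{p-2}\langle Q\nabla_x u,\nabla_x u\rangle$, which is the delicate point. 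For (c), Corollary~\ref{cor:2}(a) (extended to $p>2$ by the interpolation argument recorded there) gives continuity of $u\mapsto \nabla_x u$ from $D(G_p)$ into $L^p(\U\times\R^d,\mu)$, so $\nabla_x u_n\to\nabla_x u$ in $L^p$. Since $Q$ is bounded, bilinearity then yields $\langle Q\nabla_x u_n,\nabla_x u_n\rangle\to\langle Q\nabla_x u,\nabla_x u\rangle$ in $L^{p/2}$; combining with $|u_n|^{p-2}\to|u|^{p-2}$ in $L^{p/(p-2)}$ (Niemytskii, with the convention that the factor is $1$ when $p=2$) and applying H\"older with conjugate exponents $p/(p-2)$ and $p/2$ gives the desired $L^1$ convergence.

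With (a)--(c) in hand, closedness of $G_1$ implies $|u|^p\in D(G_1)$ with the stated formula, and the a priori bound from Step~2 passes to the limit. The main obstacle is (c): one must have $L^p$-continuity, not just $L^2$-continuity, of the space gradient on $D(G_p)$ for $p>2$, which is precisely what the interpolation proof of Corollary~\ref{cor:2}(a) provides.
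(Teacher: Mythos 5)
Your proof is correct and follows essentially the same route as the paper's: establish the formula for $G(|u|^p)$ on the core $D(G_{\infty})$ (which is exactly what Step 1 of Proposition \ref{lemma1 parte1} provides), derive the a priori bound by H\"older, and pass to general $u\in D(G_p)$ by density of $D(G_{\infty})$ together with the continuous embedding $D(G_p)\hookrightarrow W^{0,1}_p(\U\times\R^d,\mu)$ and closedness of $G_1$. The only cosmetic difference is that you bound the carr\'e-du-champ term via the identity \eqref{integration Hu} while the paper bounds it directly by H\"older using the gradient embedding; both are valid, and your spelled-out verification of the Cauchy property in $D(G_1)$ is just the detail the paper compresses into one sentence.
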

\begin{proof}
In the proof of Proposition \ref{lemma1 parte1} we have shown that $|u|^p\in D(G_{\infty})$
for any $u\in D(G_{\infty})$ and
\begin{equation}
G_{\infty}(|u|^p) = pu|u|^{p-2}\G u+p(p-1)|u|^{p-2}\langle
Q\nabla_x u,\nabla_x u \rangle.
\label{form-Gfp}
\end{equation}
Recalling that $D(G_p)$ is continuously embedded in $W^{0,1}_{p}(\U
\times \R^d,\mu)$ by Proposition \ref{thm-2.7}, formula
\eqref{form-Gfp} implies that the nonlinear operator $u \mapsto |u
|^p$ is continuous from $D(G_{\infty})$ (endowed with the
$D(G_p)$-norm) to $D(G_1)$. Estimate $\|\,|u|^p\,\|_{D(G_1)} \leq
C_p\|u\|_{D(G_p)}^p$ follows  using the H\"older inequality in the
right-hand side of \eqref{form-Gfp}. Since $D(G_{\infty})$ is dense
in $D(G_p)$, the statement follows. \end{proof}

\begin{theorem}
\label{LogSob_dominio}
For every $p\geq 2$ and for every $u\in D(G_p)$, \eqref{logsob ineq mod} holds true.
\end{theorem}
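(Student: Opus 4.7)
The plan is to extend inequality \eqref{logsob ineq mod} from $D(G_\infty)$ to all of $D(G_p)$ for $p\ge 2$ by a density argument, the main obstruction being that the $y\log y$ nonlinearity on both the left-hand side and the first right-hand side term blocks a naive termwise passage to the limit. Given $u\in D(G_p)$, I would use Proposition \ref{prop-2.9} to pick $(u_n)\subset D(G_\infty)$ with $u_n\to u$ in $D(G_p)$; Corollary \ref{cor:2}(a) then gives $u_n\to u$ in $W^{0,1}_p(\U\times\R^d,\mu)$, while Lemma \ref{Le:nonlineare} gives $|u_n|^p\to |u|^p$ in $L^1(\U\times\R^d,\mu)$. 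Passing to a subsequence (and refining via Fubini), I may assume that $u_n$ and $\nabla_x u_n$ converge $\mu$-a.e. and are dominated by fixed $g,h\in L^p(\U\times\R^d,\mu)$, and that $|u_n(s,\cdot)|^p\to |u(s,\cdot)|^p$ in $L^1(\mu_s)$ for a.e.\ $s\in(0,T)$.

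The gradient term on the right-hand side is the easy one: $|u_n|^{p-2}|\nabla_x u_n|^2\to |u|^{p-2}|\nabla_x u|^2$ pointwise $\mu$-a.e., dominated by $g^{p-2}h^2$, which lies in $L^1(\U\times\R^d,\mu)$ by H\"older's inequality with exponents $p/(p-2)$ and $p/2$ (trivial when $p=2$). Dominated convergence yields
\[
\int_{\U\times\R^d} |u_n|^{p-2}|\nabla_x u_n|^2\,d\mu\longrightarrow \int_{\U\times\R^d} |u|^{p-2}|\nabla_x u|^2\,d\mu.
\]

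The key step, and the main hurdle, is to pass to the limit in the two logarithmic terms. I would group them into the conditional entropy
\[
\mathrm{Ent}(f):=\int_{\U\times\R^d} f\log f\,d\mu-\frac{1}{T}\int_0^T\Pi f\log\Pi f\,ds=\frac{1}{T}\int_0^T\mathrm{Ent}_{\mu_s}(f(s,\cdot))\,ds,
\]
where $\mathrm{Ent}_{\mu_s}(g)=\int g\log g\,d\mu_s-\bigl(\int g\,d\mu_s\bigr)\log\bigl(\int g\,d\mu_s\bigr)\ge 0$ by Jensen; so \eqref{logsob ineq mod} reads $\mathrm{Ent}(|u|^p)\le\frac{p^2\Lambda}{2|r_0|}\int|u|^{p-2}|\nabla_x u|^2\,d\mu$. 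On nonnegative elements of $L^1(\mu_s)$, the map $g\mapsto\int g\log g\,d\mu_s$ is lower semicontinuous (Fatou applied to $g\log g+1/e\ge 0$) while $g\mapsto(\int g\,d\mu_s)\log(\int g\,d\mu_s)$ is continuous; hence each $\mathrm{Ent}_{\mu_s}$ is $L^1(\mu_s)$-l.s.c. A first liminf in $n$ (for a.e.\ $s$), followed by a second Fatou in $s$ (legitimate since $\mathrm{Ent}_{\mu_s}\ge 0$), yields $\mathrm{Ent}(|u|^p)\le\liminf_n \mathrm{Ent}(|u_n|^p)$. Combined with Proposition \ref{dsharp any} applied to each $u_n$ and the already established limit of the gradient term, this produces \eqref{logsob ineq mod} for $u$.
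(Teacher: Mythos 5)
Your proposal is correct, but it passes to the limit by a genuinely different mechanism than the paper. The paper treats the three terms of \eqref{logsob ineq mod} separately: the gradient term via the continuity of $u\mapsto\int|u|^{p-2}|\nabla_xu|^2d\mu$ on $D(G_p)$ (from the proof of Corollary \ref{cor:2}); the $\Pi$-term via Lemma \ref{Le:nonlineare}, which gives $|u_n|^p\to|u|^p$ in $D(G_1)$, hence $\Pi|u_n|^p\to\Pi|u|^p$ in $W^{1,1}(\U,\frac{ds}{T})\hookrightarrow L^\infty(0,T)$, combined with the H\"older continuity of $y\mapsto y\log y$ on bounded sets; and the left-hand side by splitting $\log=\log_+-\log_-$, using Lipschitz continuity for the $\log_-$ part and Fatou for the $\log_+$ part. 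You instead bundle the two logarithmic terms into the conditional entropy $\frac{1}{T}\int_0^T\mathrm{Ent}_{\mu_s}(|u(s,\cdot)|^p)\,ds$, exploit its nonnegativity (Jensen) and its lower semicontinuity under $L^1(\mu_s)$-convergence, and conclude by a double Fatou; the gradient term you handle by dominated convergence with an $L^p$ majorant for a subsequence. Your route is lighter: it needs only $|u_n|^p\to|u|^p$ in $L^1(\U\times\R^d,\mu)$ (which already follows from $u_n\to u$ in $L^p$, so Lemma \ref{Le:nonlineare} is not really needed there) and bypasses the identification of $\Pi(L^1)$ with $W^{1,1}(\U,\frac{ds}{T})$. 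The paper's route buys a bit more: it shows that each of the three integrals actually converges, and in particular that $\Pi|u|^p\log(\Pi|u|^p)\in L^1(0,T)$, so \eqref{logsob ineq mod} is an inequality between finite, individually identified quantities. To recover the statement in exactly the form \eqref{logsob ineq mod} from your entropy inequality you should add the (trivial) remark that if $\int_0^T\Pi|u|^p\log(\Pi|u|^p)\,ds=+\infty$ the inequality holds vacuously, and otherwise the entropy decomposition is legitimate because the negative parts of both logarithmic integrands are uniformly bounded; this is bookkeeping, not a gap.
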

\begin{proof} Fix $p \geq 2$ and   $u \in D(G_p)$. Then, there exists a
sequence $(u_n) \subset D(G_{\infty})$ such that $u_n \to u$ in the
graph norm of $G_p $. Possibly replacing  $(u_n)$ by a subsequence,
we may assume that $u_n \to u$ pointwise a.e. By Proposition
\ref{dsharp any}, for any $n \in \N$ we have
\begin{eqnarray*}
 \int_{\U \times \R^d } |u_n|^p \log (|u_n|^p)d \mu
  \leq
\frac{1}{T} \int_0^T \Pi |u_n|^p \log (\Pi |u_n|^p)ds +
\frac{p^2\Lambda}{2 |r_0|} \int_{\U \times \R^d } |u_n|^{p-2}
|\nabla_x u_n |^2 d\mu. \label{star-luca-2}
\end{eqnarray*}
As a first step, we prove  that
\begin{equation}
\lim_{n \to\infty} \int_0^T \Pi |u_n|^p \log (\Pi |u_n|^p)ds =
\int_0^T \Pi |u|^p \log (\Pi |u|^p)ds. \label{star-luca-2}
\end{equation}
By Lemma \ref{Le:nonlineare}, $|u_n|^p \to |u|^p$ in $D(G_1)$, as
$n\to\infty$. Therefore, $\lim_{n\to\infty} \Pi(|u_n|^p) = \Pi
(|u|^p)$  in $D(G_1)$. As we already mentioned at the beginning of
the section, the part of $G_1$ in $\Pi (L^1(\U \times\R^d, \mu)) $
is the time derivative $-D_s$ with domain isomorphic to
$W^{1,1}(\U,\frac{ds}{T})$.  It follows
that $\lim_{n\to\infty}\Pi(|u_n|^p) = \Pi (|u|^p)$ in
$W^{1,1}(\U,\frac{ds}{T})$ and, since $W^{1,1}((0,T),\frac{ds}{T})$
is continuously embedded in $L^{\infty}(0,T)$, and the function $y
\mapsto y \log y$ is $\alpha$-H\"older-continuous for any $\alpha
\in (0,1)$ on bounded sets of $[0,+\infty)$, we get
\begin{align*}
\frac{1}{T}  \int_0^T | \Pi |u_n|^p \log (\Pi |u_n|^p) -
\Pi |u|^p \log (\Pi |u|^p)| ds &\leq
\frac{C_1}{T}  \int_0^T | \Pi |u_n|^p - \Pi |u|^p |^{\alpha} ds\\
& \leq  C_1   \| \Pi |u_n|^p - \Pi |u|^p\|_{\infty}^{\alpha}\\
& \leq  C_2   \| \Pi |u_n|^p - \Pi
|u|^p\|^{\alpha}_{W^{1,1}((0,T),ds/T)},
\end{align*}
for some positive constants $C_i$ ($i=1,2$). Then,
\eqref{star-luca-2} follows.
Since the function $u\mapsto
H(u)=\int_{\U\times\R^d}|u|^{p-2}|\nabla_xu|^2d\mu$ is continuous in
$D(G_p)$ by  the proof of Corollary \ref{cor:2}(c), $H(u_n)$ tends
to $H(u)$ as $n\to +\infty$. Now, denote by $\log_-(y)$ and
$\log_+(y)$ the negative and the positive parts of $\log(y)$, i.e.,
\begin{eqnarray*}
\log_-(y):= \max\{0,-\log(y)\}, \quad \log_+(y):= \max\{0,
\log(y)\},\qquad\;\,y>0.
\end{eqnarray*}
Taking into account that the function $y \mapsto y^p \log _-(y^p)$
is Lipschitz continuous, we get
\begin{eqnarray*}
\int_{\U \times \R^d } \left| |u|^p \log_-(|u|^p) - |u_n|^p
\log_-(|u_n|^p) \right| d \mu \leq  C_3\int_{\U \times \R^d } \left
||u|^p-|u_n|^p\right | d \mu,
 \end{eqnarray*}
 for some constant $C_3>0$, and the  right-hand side tends to $0$ as $n \to\infty$.

By the  Fatou Lemma we have
\begin{align*}
&\int_{\U \times \R^d } |u|^p \log _+(|u|^p) d \mu \leq \liminf
_{n\to\infty}   \int_{\U \times \R^d } |u_n|^p \log _+(|u_n|^p) d
\mu
\\
\leq & \lim_{n \to\infty} \left( \int_{\U \times \R^d } |u_n|^p \log_-(|u_n|^p)d\mu
+ \frac{1}{T} \int_0^T \Pi |u_n|^p \log (\Pi |u_n|^p) ds\right.\\
&\qquad \qquad \left. +
\frac{p^2 \Lambda}{2 |r_0|} \int_{\U \times \R^d } |u_n|^{p-2}|\nabla_x u_n|^2 d \mu  \right)\\
=& \int_{\U \times \R^d } |u|^p \log_-(|u|^p) d \mu  +
\frac{1}{T}\int_0^T \Pi |u|^p \log (\Pi |u|^p) ds + \frac{p^2
\Lambda}{2 |r_0|} \int_{\U \times \R^d } |u|^{p-2} |\nabla_x u |^2 d
\mu,
\end{align*}
which implies \eqref{logsob ineq mod} and concludes the proof. \end{proof}


\subsection{Compactness in $L^p$ spaces}


If the domain $D(G_{p_0})$ is compactly embedded in $L^{p_0}(\U \times \R^d,\mu)$ for some $p_0$, a lot of nice consequences follow.

\begin{theorem}
\label{Th:cons} Under Hypothesis \ref{hyp1},  assume that the domain
of $G_{p_0}$ is compactly embedded in $L^{p_0}(\U \times \R^d,\mu)$
for some $p_0\in [1,\infty]$.
Then, for every $p\in (1,\infty)$ the domain of $G_{p }$ is
compactly embedded in $L^{p }(\U \times \R^d,\mu)$, and
\begin{itemize}
\item[(i)]
the spectrum of $G_p$ consists of isolated eigenvalues independent
of $p$, for $p\in (1,\infty)$. The associated spectral projections are independent of $p$, too;
\item[(ii)]
the growth bounds $\omega_p $ defined in \eqref{omegap,gammap} are
independent of $p\in (1,\infty)$. Denoting by $\omega_0$ their
common value, for every $p\in (1, \infty)$ we have
\begin{eqnarray*}
\omega_0=  \sup \,\{{\rm Re}\,\lambda: \lambda \in
\sigma(G_p)\setminus i\R \}.
\end{eqnarray*}
\end{itemize}
If in addition Hypothesis \ref{hyp2} too is satisfied, then
\begin{itemize}
\item[(iii)]
statement (a) of Corollary \ref{Cor:spectralgap} holds;
\item[(iv)]
 $\omega_0 <0$. Moreover,  for every $\omega > \omega_0$, $p\in (1,\infty)$
there exists $M>0$ such that
\begin{equation}
\label{a} \hskip 20pt\| P(t,s)\varphi - m_s \varphi\|_{L^p(\R^d,
\mu_t)} \leq M e^{\omega (t-s)}\| \varphi\|_{L^p(\R^d, \mu_s)},
\quad t>s, \; \varphi \in L^p(\R^d, \mu_s).
\end{equation}
\end{itemize}
\end{theorem}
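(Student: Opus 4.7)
The strategy has three parts: transfer the compactness of $D(G_{p_0})\hookrightarrow L^{p_0}$ to every $L^p$, $p\in(1,\infty)$; use the resulting compact resolvents together with consistency of the semigroups $\T_p$ and the Spectral Mapping Theorem \ref{Th:SMT} to derive (i) and (ii); finally, combine these tools with the strong stability of Theorem \ref{decadimento} to obtain (iii) and (iv).

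For the transfer, since $\mu$ is a probability measure on $\U\times\R^d$ one has $L^q\subset L^p$ for $q\geq p$. The resolvents $R(\lambda,G_p)f=\int_0^\infty e^{-\lambda t}\T(t)f\,dt$ ($\lambda>0$) are consistent on $L^p\cap L^q$ and bounded of norm $\leq 1/\lambda$ on every $L^p$, $p\in[1,\infty]$, by contractivity of $\T(t)$. By Krasnosel'skii's interpolation theorem for compact operators, compactness of $R(\lambda,G_{p_0})$ on $L^{p_0}$ propagates to compactness of $R(\lambda,G_p)$ on $L^p$ for every $p\in(1,\infty)$---interpolating with $L^1$-boundedness below $p_0$ and with $L^\infty$-boundedness above $p_0$.

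Once all $R(\lambda,G_p)$ are compact, $\sigma(G_p)$ consists of isolated eigenvalues of finite algebraic multiplicity with no finite accumulation points. For $p$-independence, any $L^q$-eigenfunction of $G_q$ with $q\ge p$ is also an $L^p$-eigenfunction of $G_p$ by consistency. Conversely, an eigenfunction $u\in D(G_p)$ satisfies $u=e^{-\lambda t}\T(t)u$ for all $t>0$, and combining the local parabolic regularity of Corollary \ref{restrizione} with the compact embedding $D(G_p)\hookrightarrow L^p$ one bootstraps $u$ into every $L^q$, $q<\infty$, so $u\in D(G_q)$ with the same eigenvalue. The spectral projections agree via the Dunford formula $\frac{1}{2\pi i}\oint R(\zeta,G_p)\,d\zeta$ and consistency of resolvents, giving (i). For (ii), note that $\Pi$ commutes with $\T(t)$, so $G_p$ respects the splitting $L^p=\Pi L^p\oplus(I-\Pi)L^p$: on $\Pi L^p\cong L^p(\U,ds/T)$, $G_p$ is $-D_s$ with spectrum $\{2\pi ik/T:k\in\Z\}$; on $(I-\Pi)L^p$, Theorem \ref{Th:SMT} yields $\omega_p=\sup\{\Re\lambda:\lambda\in\sigma(G_p|_{(I-\Pi)L^p})\}$, a $p$-independent quantity equal to $\sup\{\Re\lambda:\lambda\in\sigma(G_p)\setminus i\R\}$.

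Under the additional Hypothesis \ref{hyp2}, Theorem \ref{decadimento} yields $\lim_{t\to\infty}\|\T(t)(I-\Pi)f\|_p=0$. Any eigenvalue $\lambda$ of $G_p|_{(I-\Pi)L^p}$ with $\Re\lambda\ge 0$ would produce a non-decaying orbit $\T(t)u=e^{\lambda t}u$, contradicting strong stability; hence $\sigma(G_p|_{(I-\Pi)L^p})\subset\{\Re\lambda<0\}$, proving (iii) together with the description of $\sigma(G_p)\cap i\R$. The strictness $\omega_0<0$ in (iv) follows from Theorem \ref{Th:SMT}: $\omega_0=t^{-1}\log r(\T_p(t)|_{(I-\Pi)L^p})$, and compactness of the resolvent, combined with discreteness of the spectrum in the open left half-plane and its absence from $i\R$, force this spectral radius to be strictly below $1$. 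Estimate \eqref{a} then follows from Theorem \ref{Pr:EquivExp} applied to $\|\T(t)(I-\Pi)f\|_p\leq M_\omega e^{\omega t}\|f\|_p$ for any $\omega>\omega_0$. The main obstacles I foresee are the integrability upgrade in (i)---classical interpolation does not suffice, and one iterates $u=e^{-\lambda t}\T(t)u$ against the compact embedding and local regularity---and the strict inequality $\omega_0<0$, which requires excluding accumulation of eigenvalues on $i\R$ from the open left half-plane, typically via an Arendt--Batty-type argument exploiting the compact resolvent.
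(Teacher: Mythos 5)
Your architecture coincides with the paper's: interpolate compactness of the resolvent across the $L^p$-scale, split $L^p$ via $\Pi$, use the spectral mapping theorem on the complement of $\Pi L^p$, and combine with the strong stability of Theorem \ref{decadimento}. Two steps, however, do not go through as written.

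First, the reverse inclusion in your proof of $p$-independence of the spectrum. You propose to upgrade an $L^p$-eigenfunction $u$ to $L^q$ for all $q<\infty$ by iterating $u=e^{-\lambda t}\T(t)u$ against Corollary \ref{restrizione} and the compact embedding. Local parabolic regularity improves $u$ only on bounded cylinders $\U\times B(0,r)$; it gives no control of $\int|u|^q\,d\mu$ at infinity, and $\T(t)$ is merely an $L^p$-contraction (no ultracontractivity is assumed), so nothing in this loop increases global integrability. The paper instead applies the standard theorem that a consistent family of compact operators on the $L^p$-scale of a finite measure space has $p$-independent spectrum (Davies, Cor.~1.6.2, applied to $R(\lambda,G_p)$ for one fixed $\lambda>0$); that result also gives the $p$-independence of the spectral projections at once, and is the step you should substitute for the bootstrap.

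Second, in (ii) and (iv) you invoke Theorem \ref{Th:SMT} for the restriction $\T_{I-\Pi}(t)$, but that theorem is stated for $\T_p(t)$ on the whole space, and the spectral mapping theorem does not automatically pass to the part of a semigroup in an invariant subspace. The paper supplies the missing link: the compact resolvent forces $\sigma(G_p)=P\sigma(G_p)$, hence $\sigma(\T(t))\setminus\{0\}$ consists of eigenvalues, hence so does $\sigma(\T_{I-\Pi}(t))\setminus\{0\}$, and the spectral mapping theorem for the point spectrum then applies to the part, identifying $\omega_p$ with $\sup\{\mathrm{Re}\,\lambda:\lambda\in\sigma(G_{I-\Pi})\}$. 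With this in hand, your worry about eigenvalues accumulating at $i\R$ is resolved without any Arendt--Batty argument (which in any case yields only strong, not exponential, stability): $\sigma(\T_{I-\Pi}(1))$ is a closed set equal off $0$ to $e^{\sigma(G_{I-\Pi})}$, it misses the unit circle because $\sigma(G_{I-\Pi})\cap i\R=\varnothing$ by (iii), and closedness of the spectrum alone then forces its spectral radius to be strictly less than $1$, i.e. $\omega_0<0$.
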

\begin{proof} Suppose that $D(G_{p_0})$ is compactly embedded in
$L^{p_0}(\U\times\R^d,\mu)$. Then, for any $\lambda>0$ the resolvent
operator $u\mapsto \int_{0}^{\infty}e^{-\lambda t} \T(t)u\, dt$ is
compact in $L^{p_0}(\U \times \R^d,\mu)$, and since it is bounded in
all spaces $L^p(\U \times \R^d,\mu)$, $1\leq p \leq\infty$, it is
compact in all spaces $L^p(\U \times \R^d,\mu)$, $1 <p <\infty$, by
interpolation. See e.g.,  \cite[(proof of) Thm. 1.6.1]{davies}, for
$p_0<\infty$, and \cite[Prop. 4.6]{MPW} for $p_0=\infty$.
Since the domain of $G_p$ coincides with the range of $R(\lambda,
G_p)$, it is compactly embedded in $L^p(\U \times \R^d,\mu)$.

Let us now prove statements (i) to (iv).

\vspace{2mm}

(i). By the general spectral theory, the spectrum of $G_p$ consists
of isolated eigenvalues. Applying \cite[Cor. 1.6.2]{davies} to
the resolvent $R(\lambda, G_p)$ for a fixed $\lambda >0$, it follows
that the spectrum of $R(\lambda, G_p)$ is independent of $p$ and,
hence, the spectrum of $G_p$ is independent of $p$. It also follows that the spectral projections are independent of $p$.

\vspace{2mm}

(ii). Fix any $p\in (1,\infty)$  and denote by $G_{\Pi}$, $\T
_{\Pi}(t)$, respectively,  the parts of $G_p$, $\T(t)$ in $\Pi
(L^p(\U \times \R^d, \mu))$, and by $G_{I-\Pi}$, $\T _{I-\Pi}(t)$,
respectively, the parts of $G_p$, $\T(t)$ in $(I-\Pi )(L^p(\U \times
\R^d, \mu))$.

Since $\Pi$ commutes with $\T(t)$, then $\sigma (G_p) = \sigma
(G_{\Pi}) \cup \sigma(G_{I-\Pi})$. The   spectrum of $G_{\Pi} $ is
the set $\{ 2k \pi i/T: k\in \Z\}$, since $\Pi (L^p(\U \times \R^d,
\mu))$ is isometric to $L^{p}(\U, \frac{ds}{T})$ and $G_{\Pi} =
-D_s$ on $D(G_{\Pi}) = \Pi (D(G_p))$. Therefore,
\begin{eqnarray*}
\sup \{{\rm Re}\,\lambda: \lambda \in\sigma(G_p), \; {\rm
Re}\,\lambda <0\}  = \sup \{{\rm Re}\,\lambda: \lambda \in \sigma
(G_{I-\Pi})  \}.
\end{eqnarray*}
Let us prove that such suprema coincide with $\omega_p$. This will
imply that $\omega_p$ is independent of $p$, because the left-hand
supremum is independent of $p$.

To this aim, we remark that, although the operators $\T(t)$ are not
compact, $\sigma (\T(t))\setminus \{0\}$ consists of eigenvalues.
Indeed, by the Spectral Mapping Theorem \ref{Th:SMT}, $\sigma (\T(t)
)\setminus \{0\} = e^{t \sigma (G_p)}$, and by the general theory of
semigroups (e.g., \cite[Thm.~IV.3.7]{EN}) $P\sigma( \T(t) )\setminus
\{0\} = e^{t P\sigma (G_p)}$, where $P\sigma $ denotes the point
spectrum. Since $ \sigma (G_p) = P \sigma (G_p)$, then $\sigma
(\T(t)) \setminus \{0\}  = P\sigma (\T(t) )\setminus \{0\} $. As a
consequence,  also $\sigma (\T _{I-\Pi}(t)) \setminus \{0\} $
consists of eigenvalues, because the elements of $\sigma  (\T
_{I-\Pi}(t)) $, which are not eigenvalues,
 are  contained in $ \sigma (\T(t))\setminus P\sigma(\T(t))$, which
do not contain nonzero elements.  Again by the spectral mapping
theorem for the point spectrum, $\sigma (\T _{I-\Pi}(t)) \setminus
\{0\}  = e^{t \sigma(G_{I-\Pi})}$ i.e., the semigroup $\T
_{I-\Pi}(t)$ satisfies the spectral mapping theorem. This implies
that $\omega_p = \sup \{{\rm Re}\,\lambda: \lambda \in \sigma
(G_{I-\Pi})  \}$, because $\omega_p$ coincides with the logarithm of
the spectral radius of $\T_{I-\Pi} (1)$ (e.g., \cite[Prop.~IV.2.2]{EN}).

\vspace{2mm} (iii). Since $\T_{I-\Pi}$ is strongly stable by Theorem
\ref{decadimento},  $G_{I-\Pi}$ cannot have eigenvalues on the
imaginary axis. Therefore, $i\R $  is contained  the resolvent set
of $G_{I-\Pi}$. The arguments used in the proof of the statement (a)
of Corollary \ref{Cor:spectralgap}  yield the statement.

\vspace{2mm}
(iv).  We already remarked that $\sigma (G_{I-\Pi})\cap i\R = \varnothing$. Consequently, the spectrum of $\T
_{I-\Pi}(1)$ does not intersect the unit circle. It follows that
\begin{equation}
\label{spettroT(1)} \sup \{ |\zeta|: \zeta \in \sigma (\T
_{I-\Pi}(1))  \} <1.
\end{equation}
Indeed, if there were a sequence of eigenvalues $(\zeta_n)$
of $\T_{I-\Pi}(1)$ such that $\lim_{n\to\infty} |\zeta_n| =1$,   a
subsequence would converge  to an element $\zeta $ with modulus $1$,
and since the spectrum is closed,   $\zeta \in \sigma (\T
_{I-\Pi}(1))$. But this is impossible. Hence, \eqref{spettroT(1)}
holds.

It follows that  there exists $a<1$ such that
\begin{eqnarray*}
\|\T_{I-\Pi} (n)\|_{{\mathcal L}(L^p(\U\times\R^d,\mu))} = \|
(\T_{I-\Pi} (1))^n\|_{{\mathcal L}(L^p(\U\times\R^d,\mu))} \leq a^n,
\end{eqnarray*}
for $n$ large, and since
\begin{align*}
\|\T_{I-\Pi} (t)\|_{{\mathcal L}(L^p(\U\times\R^d,\mu))}  & =\|\T_{I-\Pi} (t-n)
\T_{I-\Pi} (n)\|_{{\mathcal L}(L^p(\U\times\R^d,\mu))} \\
&
  \leq \| \T_{I-\Pi}(n)\|_{{\mathcal L}(L^p(\U\times\R^d,\mu))},
  \end{align*}
for $n\leq t <n+1$, $\|\T_{I-\Pi} (t)\|_{{\mathcal L}(L^p(\U\times\R^d,\mu))}$ decays
exponentially as $ t\to \infty$, i.e., $\omega_p <0$.

Estimate \eqref{a}  follows from Theorem
\ref{Pr:EquivExp}.
\end{proof}

As in the autonomous case,   log-Sobolev inequalities imply that
$D(G_p)$ is compactly embedded in $L^p(\U \times \R^d,\mu)$, for
every $p\in (1,\infty)$.

\begin{theorem} \label{Th:comp1} Let Hypotheses  \ref{hyp1}  and  \ref{hyp2} hold.
Assume that  \eqref{logsob ineq mod}
holds for $p=2$ and for every $f\in D(G_2)$. Then, for any $p\in
(1,\infty)$, $D(G_p)$ is compactly embedded in $L^p(\U \times
\R^d,\mu)$.
\end{theorem}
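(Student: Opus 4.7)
The plan is to establish the compact embedding $D(G_2) \hookrightarrow L^2(\U \times \R^d, \mu)$ directly, and then invoke Theorem \ref{Th:cons} to extend the compactness to every $p \in (1,\infty)$. So take a bounded sequence $(u_n) \subset D(G_2)$. Corollary \ref{restrizione} yields a uniform bound on $(u_n)$ in $W^{1,2}_{2}(\U \times B(0,r), ds\times dx)$ for every $r > 0$, and by Rellich--Kondrachov together with a diagonal extraction I obtain a subsequence, still denoted $(u_n)$, that converges in $L^2(\U \times B(0,R), ds\times dx)$, and hence---since the density $\rho$ of $\mu$ is continuous and positive---in $L^2(\U \times B(0,R), \mu)$, for every $R > 0$. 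What remains is to upgrade this local convergence to convergence in $L^2(\U \times \R^d, \mu)$, which reduces to a uniform tail estimate.

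For the tail estimate I would use the log-Sobolev inequality \eqref{logsob ineq mod} with $p=2$,
\begin{equation*}
\int_{\U \times \R^d} u_n^2 \log u_n^2\, d\mu \le \frac{1}{T}\int_0^T \Pi u_n^2\, \log \Pi u_n^2\, ds + \frac{2\Lambda}{|r_0|}\int_{\U \times \R^d}|\nabla_x u_n|^2\, d\mu.
\end{equation*}
The gradient integral on the right is uniformly bounded by Corollary \ref{cor:2}(a). The first term is the delicate one: by Lemma \ref{Le:nonlineare} (with $p=2$) I have $u_n^2 \in D(G_1)$ with $\|u_n^2\|_{D(G_1)} \le C\|u_n\|_{D(G_2)}^2$, hence $\Pi u_n^2 \in \Pi(D(G_1))$, which is isomorphic to $W^{1,1}(\U,\,ds/T)$; by the one-dimensional Sobolev embedding $W^{1,1}(\U) \hookrightarrow L^\infty(\U)$, the quantity $\|\Pi u_n^2\|_\infty$ is uniformly bounded. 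Since $|y \log y|$ is bounded on each bounded subset of $[0,\infty)$, this bounds $\frac{1}{T}\int_0^T |\Pi u_n^2 \log \Pi u_n^2|\, ds$ uniformly, so $\int u_n^2 \log u_n^2\, d\mu \le C$. Adding the elementary inequality $\int u_n^2 (\log u_n^2)^-\, d\mu \le 1/e$ gives a uniform bound on $\int u_n^2 (\log u_n^2)^+\, d\mu$.

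From this uniform $L\log L$ bound, the standard estimate $\int_{\{u_n^2 > K\}} u_n^2\, d\mu \le (\log K)^{-1}\int u_n^2 (\log u_n^2)^+\, d\mu$ gives uniform integrability of $(u_n^2)$. Combined with the tightness of $\mu$ inherited from Theorem \ref{Th:P(t,s)}(v), this yields $\lim_{R\to\infty}\sup_n \int_{\U \times (\R^d\setminus B(0,R))} u_n^2\, d\mu = 0$. Together with the local convergence from the first step, a routine $\varepsilon/2$--$\varepsilon/2$ splitting shows that $(u_n)$ is Cauchy in $L^2(\U \times \R^d, \mu)$, establishing the compact embedding for $p = 2$; Theorem \ref{Th:cons} then transfers the compactness to every $p \in (1,\infty)$.

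The main technical obstacle is taming the $\Pi u_n^2 \log \Pi u_n^2$ term: the mere boundedness of $(u_n)$ in $D(G_2)$ does not obviously control $\Pi u_n^2$ in $L^\infty$, and my argument relies essentially on Lemma \ref{Le:nonlineare} to pass from $u_n \in D(G_2)$ to $u_n^2 \in D(G_1)$, followed by the one-dimensional Sobolev embedding of $W^{1,1}(\U)$ into $L^\infty(\U)$ applied to the projected function.
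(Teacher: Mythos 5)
Your proposal is correct and follows essentially the same route as the paper: the log-Sobolev inequality plus Lemma \ref{Le:nonlineare} and the embedding $W^{1,1}(\U)\hookrightarrow L^{\infty}(\U)$ give the uniform $L\log L$ bound and hence the uniform tail estimate, Corollary \ref{restrizione} with the local compact Sobolev embedding gives local compactness, and Theorem \ref{Th:cons} transfers the result to all $p\in(1,\infty)$. The only difference is cosmetic (you argue via sequential compactness and a diagonal extraction, while the paper covers the unit ball by finitely many $\varepsilon$-balls), and your explicit handling of $\int u_n^2(\log u_n^2)^- d\mu\le 1/e$ is if anything slightly more careful than the paper's.
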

\begin{proof}
By Theorem \ref{Th:cons}, it is enough to prove that $D(G_2)$ is compactly embedded in $L^2(\U \times \R^d, \mu)$.

We shall show that, for every $\eps >0$, the unit ball $B$ of $D(G_2)$
may be covered by a finite number of balls of $L^2(\U \times \R^d,\mu)$
with radius not greater than $\eps$.

Fix $u\in B$, $k>1$ and set $E := \{|u| < k\}$. For every $R>0$ we have, by \eqref{logsob ineq mod},
\begin{align*}
\int_{0}^{T}\int_{ B(0,R)^c} u^2 d \mu \leq &
\int_{0}^{T}\int_{ B(0,R)^c}  \one_E k^2 d \mu  + \frac{1}{\log (k^2)}\int_{0}^{T}\int_{ B(0,R)^c}
\one_{E^c} u^2 \log (u^2) d \mu \nonumber\\
\leq &\frac{k^2}{ T} \int_0^T ds \int_{B(0,R)^c} d\mu_{s} \nonumber \\
& + \frac{ 1}{\log(k^2)}\bigg(\frac{1}{T} \int_0^T \Pi(u^2)   \log
(\Pi u^2) ds + C_1\int_{\U\times\R^d}|\nabla_x u|^2 d\mu \bigg),
\end{align*}
for some positive constant $C_1$, independent of $u$ and $k$.

Fix $\varepsilon >0$. By Lemma \ref{Le:nonlineare}, $u^2 \in
D(G_1)$, and $\|u^2\|_{D(G_1)} \leq C_2\|u\|_{D(G_2)}^2 \leq C_2$, with $C_2$ independent of $u$.
Therefore, $\Pi u^2 $ belongs to the domain of the part of $G_1$ in
$\Pi (L^1(\U \times \R^d,\mu))$, which is isomorphic to $W^{1,1}(\U,
\frac{ds}{T})$. By the Sobolev embedding for the Lebesgue measure,
$\|\Pi u^2\|_{L^{\infty}(0,T)}$ is bounded by a constant independent
of $u$, so that $\int_0^T \Pi u^2   \log (\Pi u^2) ds$ is bounded by
a constant independent of $u$. Also the integral
$\int_{\R^{1+d}}|\nabla_x u|^2 d\mu$ is bounded by a constant
independent of $u$, by Proposition \ref{thm-2.7}. So, there exists $M>0$
such that $\frac{1}{T} \int_0^T \Pi u^2   \log (\Pi u^2) ds+ C_1
\int_{\U\times\R^d}|\nabla_x u|^2 d\mu\leq M$, for every $u\in B $.
Taking $k $ large enough, we get
\begin{eqnarray*}
\frac{ 1}{\log(k^2)}\left(\frac{1}{T}\int_0^T \Pi u^2 \log (\Pi u^2)
ds + C_1 \int_{\U\times\R^d}|\nabla_x u|^2 d\mu \right) \leq \frac{\eps}{2}.
\end{eqnarray*}
By Theorem \ref{Th:P(t,s)}(v) the measures $\mu_s$ are tight, so
that there exists $R>0$ such that $\frac{k^2}{ T} \int_0^T ds
\int_{B(0,R)^c} d\mu_{s} \leq \eps/2$. Summing up,
\begin{eqnarray*}
\int_{(0,T)\times B(0,R)^c} u^2 d \mu  \leq \eps.
 \end{eqnarray*}

By Corollary \ref{restrizione}, $D(G_2)$ is contained in
$W^{1,2}_{2,\loc}(\U \times \R^d, ds\times dx)$ and the restriction
operator ${\mathcal R}:D(G_2)\to W^{1,2}_2(\U \times B(0,R), ds\times dx)$,
${\mathcal R}u=u_{|\U \times B(0,R)}$, is continuous. Since the
embedding of $W^{1,2}_2(\U \times B(0,R), ds\times dx)$ in $L^2(\U
\times B(0,R),ds\times dx)$ is compact, there exist $f_1,\dots,f_k
\in L^2(\U \times B(0,R),ds\times dx)$ such that the balls
$B(f_i,\varepsilon)$ cover the restrictions of the functions of $B$
to $\U \times B(0,R)$. Let $\tilde{f}_i$ denote the null extension
of $f_i$ to $\U \times \R^d$. Then $B \subset \bigcup_{i=1}^k
B(\tilde{f}_i,2 \varepsilon)$, and the statement follows.
\end{proof}

\begin{remark}
\label{Rem:confronto}
{\em Under Hypotheses  \ref{hyp1}  and  \ref{hyp2}, if the diffusion coefficients are independent of $x$  and
$r_0<0$, then the assumptions of Theorem \ref{Th:comp1} are
satisfied, hence all the statements of Theorem \ref{Th:cons} hold, as well as the statements of Corollaries \ref{Cor:spectralgap} and
\ref{Cor:ca}. Since $r_0 = \ell_2 = \omega_0$, statement (ii) of Theorem \ref{Th:cons} is sharper than the statements of Corollaries \ref{Cor:spectralgap} and
\ref{Cor:ca} for $1<p<2$, while estimate \eqref{eq2} is sharper than statement (iv)  of Theorem \ref{Th:cons} for $p\geq 2$. }
\end{remark}

\section{Examples}
\setcounter{equation}{0}

\subsection{Time dependent Ornstein-Uhlenbeck operators}

Let us consider the operators
\begin{eqnarray*}
(\A(t)\varphi)(x)=\frac12 \tr  \left( B(t)B^*(t)D_x^2\varphi   (x)\right)+ \langle A(t)x+f(t),\nabla \varphi(x)\rangle ,\quad x\in\R^d,
\end{eqnarray*}
with continuous and $T$-periodic  data $A,B:\R\to{\mathcal L}(\R^d)$ and $f:\R\to\R^d$.
The ellipticity condition \eqref{ellipticity} is satisfied provided $\det B(t)\neq 0$ for every $t\in \R$.

In  \cite{GL2} asymptotic behavior results for the backward
evolution operator $P(s,t)$, $s\leq t$, associated to the family
$\{\A(t)\}$ in $L^2$ spaces have been proved, as well as  spectral
properties of the parabolic operator $u\mapsto \A(s)u +D_s u $.
Here, we  consider forward evolution operators $P(t,s)$, $t\ge s$,
and the parabolic operator $u\mapsto \A(s)u -D_s u $. Reverting
time, there is no difficulty to pass from backward to forward.

Let $U(t,s)$ be the evolution operator in $\R^d$,  solution of
$\frac{\partial}{\partial t} U(t,s) = A(t) U(t,s)$, $U(s,s) = I$. A
(unique) $T$-periodic evolution system of measures $\{ \mu_s: s\in
\R\}$ exists provided the growth bound $\omega_0(U)$ of  $U(t,s)$ is
negative; in this case the measures $\mu_t$ are explicit Gaussian
measures.

The results of this paper allow to extend most of the $L^2$ asymptotic behavior results of \cite{GL2} to the $L^p$ setting, with $p\in (1, \infty)$.
In fact, the log-Sobolev inequality  \eqref{logsob ineq mod}
holds for $p=2$, for every $u\in D(G_2)$. It was proved in \cite{DPL} for every $u\in C^{1,2}_{b}(\U\times \R^d)$ which is dense in $D(G_2)$, and the procedure of Theorem \ref{LogSob_dominio} allows to extend it to all the functions $u\in D(G_2)$.
Moreover, Proposition 2.4 of \cite{GL2} shows that $\omega_2 =\omega_0(U)$.
Therefore, all the statements of Theorem \ref{Th:cons} hold,  with $\omega_0 =\omega_0(U)$.

Note that our assumption of H\"older regularity of the coefficients
is not needed here,  because the proof of Theorem
\ref{LogSob_dominio} is independent of time regularity of the
coefficients.

\subsection{Diffusion coefficients independent of $x$}

Let now consider the operators $\A (t)$ defined in \eqref{A(t)} with $T$-periodic diffusion coefficients depending only on time, under
the regularity and ellipticity assumptions of Hypothesis  \ref{hyp1}(i)--(ii). For every $n\in \N$ the function  $V(x):=1+|x|^{2n}$  satisfies
Hypothesis  \ref{hyp1}(iii) provided that there exists $R>0$ such that
\begin{eqnarray*}
\sup_{s\in \R, \,|x|\geq R} \frac{\langle b(s,x) ,x\rangle}{|x|^2} <0 .
\end{eqnarray*}
In this case the statements of  Theorem \ref{Th:P(t,s)} and of
Proposition \ref{Pr:uniqueness} hold. So, there exists a Markov
evolution operator $P(t,s)$ with a unique $T-$periodic evolution
system of measures $\{\mu_s: s\in \R\}$. The measures $\mu_s$ have
uniformly bounded moments of every order, i.e.,
\begin{eqnarray*}
\sup_{s\in \R} \int_{\R^d} |x|^k \mu_s(dx) <\infty, \quad k\in \N.
\end{eqnarray*}

If moreover  the derivatives $D_ib_j$ belong to $C^{\alpha/2,\alpha}_{\loc}(\U \times \R^d)$ and there exists $r_0\in \R$ such that
\begin{eqnarray*}\qquad\;\;\;\;\;\;\;\langle \nabla_x b(s,x)\xi,\xi \rangle \leq r_0|\xi|^2,
\quad   (s,x) \in \U \times \R^d ,\;\,   \xi \in \R^d,
\end{eqnarray*}
then Hypothesis  \ref{hyp2} holds too.

Applying Theorem \ref{decadimento}, statements (ii) to (iv) of Theorem \ref{Pr:equivalenza} hold.

In the case that $r_0<0$, we have $\ell_2 =r_0<0$, and the  log-Sobolev
inequalities of Subsection 3.1 hold. By Theorem \ref{Th:comp1} the
domain $D(G_p)$ is compactly embedded in $L^p(\U\times \R^d, \mu)$
for   $p\in (1, \infty)$ and all the statements of Theorem
\ref{Th:cons} hold. Moreover the statements   of Corollaries
\ref{Cor:spectralgap} and \ref{Cor:ca} hold. See Remark
\ref{Rem:confronto}.

\subsection{General diffusion coefficients}

In the general case, setting again $V(x):=1+|x|^{2n}$, we have
\begin{eqnarray*}
\A (s) V(x) = 2n |x|^{2n} \bigg [ (2n-2) \frac{ \langle Q(s,x)x,x\rangle }{|x|^4} +  \frac{\tr \;Q(s,x)}{|x|^2} + \frac{\langle b(s,x), x\rangle}{|x|^2} \bigg],
\end{eqnarray*}
for any $(s,x)\in \R^{1+d}$, so that Hypothesis  \ref{hyp1}(iii) is satisfied by $V$ provided   there exists $R>0$  such that
\begin{equation}
\label{eq:suffV}
\sup_{s\in \R, \,|x|\geq R} (2n-2+d)\frac{ \Lambda(s,x)}{|x|^2} +   \frac{\langle b(s,x) ,x\rangle}{|x|^2} <0,
\end{equation}
where $\Lambda(s,x)$ is the greatest eigenvalue of $Q(s,x)$. If also
the regularity and ellipticity assumptions of  Hypothesis
\ref{hyp1}(i)--(ii) are satisfied, by Theorem \ref{Th:P(t,s)} and
Proposition \ref{Pr:uniqueness}   there exists a Markov evolution
operator $P(t,s)$ with a unique $T-$periodic evolution system of
measures $\{\mu_s: s\in \R\}$; the measures $\mu_s$ satisfy
\begin{eqnarray*}
\sup_{s\in \R} \int_{\R^d} |x|^{2n} \mu_s(dx) <\infty.
\end{eqnarray*}
If, in addition,  Hypothesis  \ref{hyp2} is satisfied and   there exists $C>0$ such that
\begin{eqnarray*}
\|Q(s,x)\|_{{\mathcal L}(\R^d)} \leq C(1+|x|)^{2n+1}, \quad s\in \R, \;x\in \R^d,
\end{eqnarray*}
then the assumptions of Proposition \ref{DecadimentoGradiente} hold. Indeed,  since $\Lambda (s,x)>0$,    \eqref{eq:suffV} implies that  $\langle b(s,x) ,x\rangle <0$ for $|x|\geq R$ and $s\in \R$, so that the second condition of \eqref{stimapiu'} is satisfied.
Then,  Theorem   \ref{decadimento} yields that statements (ii) to (iv) of Theorem \ref{Pr:equivalenza} hold.

If in addition the diffusion coefficients are bounded and the number $\ell_2$ in \eqref{ell_p} is negative, all the assumptions of Corollaries
\ref{Cor:spectralgap}  and \ref{Cor:ca} are satisfied,   we have the exponential decay rates given by Corollary  \ref{Cor:ca} and the spectral properties of the operators $G_p$ given by Corollary \ref{Cor:spectralgap}.

\end{document}